\newtheorem{lemma}{Lemma}[section]
\newtheorem{rmk}{Remark}[section]
\newtheorem{assumption}{Assumption}[section]  % 定义 Assumption 环境
\newtheorem{definition}{Definition}[section]
\newtheorem{theorem}{Theorem}[section]
\newcommand{\ba}{\begin{array}}
\newcommand{\ea}{\end{array}}
\newcommand{\bit}{\begin{itemize}}
\newcommand{\eit}{\end{itemize}}
\newcommand{\be}{\begin{equation}}
\newcommand{\ee}{\end{equation}}
\newcommand{\bee}{\begin{equation*}}
\newcommand{\eee}{\end{equation*}}
\newcommand{\bea}{\begin{eqnarray}}
\newcommand{\eea}{\end{eqnarray}}
\newcommand{\st}{\mathrm{s.t.}}
\newcommand{\bs}{\mathrm{\bf s}}
\newcommand{\bg}{\mathrm{\bf g}}
\newcommand{\bG}{\mathrm{\bf G}}
\newcommand{\bx}{\mathbf{x}}
\newcommand{\bp}{\mathbf{p}}
\newcommand{\bc}{\mathbf{c}}
\newcommand{\bW}{\mathbf{W}}
\newcommand{\bV}{\mathbf{V}}
\newcommand{\Rmn}[1]{\uppercase\expandafter{\romannumeral#1}}
\newcommand{\Pcal}{\mathcal{P}}
\newcommand{\Mcal}{\mathcal{M}}
\newcommand{\grad}{\mathrm{grad}}
\newcommand{\R}{\mathbb{R}}
\newcommand{\iprod}[2]{\left \langle #1, #2 \right \rangle }
\title{Riemannian EXTRA: Communication-efficient decentralized optimization over compact submanifolds with data heterogeneity}
\author{%
  Jiayuan Wu \\
  Wharton Statistics and Data Science Department\\
  University of Pennsylvania\\
  \texttt{jyuanw@upenn.edu} \\
  \And
  Zhanwang Deng\thanks{Corresponding author}  \\
  Academy for Advanced Interdisciplinary Studies\\
  Peking University\\
  \texttt{dzw\_opt2022@stu.pku.edu.cn} \\
  \And
  Jiang Hu \\
  Department of Mathematics\\
  University of California, Berkeley\\
  \texttt{hujiangopt@mail.com} \\
  \And
  Weijie Su  \\
  Wharton Statistics and Data Science Department\\
  University of Pennsylvania\\
  \texttt{suw@wharton.upenn.edu} \\
  \And
  Zaiwen Wen \\
  Beijing International Center for Mathematical Research \\
Center for Machine Learning Research
Changsha Institute for Computing and Digital Economy\\
Peking University \\
\texttt{wenzw@pku.edu.cn} \\
  % examples of more authors
  % \And
  % Coauthor \\
  % Affiliation \\
  % Address \\
  % \texttt{email} \\
  % \AND
  % Coauthor \\
  % Affiliation \\
  % Address \\
  % \texttt{email} \\
  % \And
  % Coauthor \\
  % Affiliation \\
  % Address \\
  % \texttt{email} \\
  % \And
  % Coauthor \\
  % Affiliation \\
  % Address \\
  % \texttt{email} \\
}
\begin{document}

\maketitle

\begin{abstract}
We consider decentralized optimization over a compact Riemannian submanifold in a network of $n$ agents, where each agent holds a smooth, nonconvex local objective defined by its private data. The goal is to collaboratively minimize the sum of these local objective functions. In the presence of data heterogeneity across nodes, existing algorithms typically require communicating both local gradients and iterates to ensure exact convergence with constant step sizes. In this work, we propose REXTRA, a Riemannian extension of the EXTRA algorithm [Shi et al., SIOPT, 2015], to address this limitation. On the theoretical side, we leverage proximal smoothness to overcome the challenges of   manifold nonconvexity and establish a global sublinear convergence rate of $\mathcal{O}(1/k)$, matching the best-known results. To our knowledge, REXTRA is the first algorithm to achieve a global sublinear convergence rate under a constant step size while requiring only a single round of local iterate communication per iteration. Numerical experiments show that REXTRA achieves superior performance compared to state-of-the-art methods, while supporting larger step sizes and reducing total communication by over 50\%.
\end{abstract}
% onstructing precise local s

\section{Introduction}
Decentralized optimization has attracted lots of attention due to its relevance in large-scale distributed systems, including applications in distributed computing, machine learning, control, and signal processing. In these systems, data is often distributed across multiple agents or computational nodes, making centralized approaches impractical because of limitations in storage, communication, and computation. In this work, we focus on decentralized smooth optimization problems constrained to a compact Riemannian submanifold embedded in Euclidean space. Specifically, we consider the following formulation:
\be \label{prob:original}
\begin{aligned}
  \min \quad & \frac{1}{n}\sum_{i=1}^n f_i(x_i), \\
  \st \quad & x_1 = \cdots = x_n,\quad x_i \in \Mcal,\quad \forall i=1,\dots,n,
\end{aligned}
\ee
where \( n \) denotes the number of agents, each function \( f_i \) is a smooth local objective associated with the data at agent \( i \), and \( \mathcal{M} \subset \mathbb{R}^{d \times r} \) is a compact smooth manifold, such as the Stiefel manifold \( {\rm St}(d, r) := \{ x \in \mathbb{R}^{d \times r} : x^\top x = I_r \} \). This class of problems naturally arises in various machine learning and signal processing tasks, including principal component analysis \cite{ye2021deepca}, low-rank matrix completion \cite{boumal2015low,hu2022riemannian}, neural networks with batch normalization \cite{cho2017riemannian,hu2022riemannian}, networks with orthogonality constraints \cite{arjovsky2016unitary,vorontsov2017orthogonality,huang2018orthogonal,eryilmaz2022understanding}, and parameter-efficient fine-tuning of large models \cite{zhangriemannian,hu2024retraction}.

\subsection{Related work}
Decentralized optimization in the Euclidean space (i.e., $\mathcal{M} = \mathbb{R}^{d \times r}$) has been extensively studied over the past few decades. A seminar work is decentralized (sub)gradient descent (DGD) \cite{nedic2009distributed,tsitsiklis1986distributed,yuan2016convergence}, which combines local gradient updates with consensus steps by communicating local iterates. However, DGD with constant step sizes can only converge to a neighborhood of a stationary point, and cannot guarantee exact convergence \cite{yuan2016convergence}. To address this limitation, two main types of methods have been developed. The first type, known as gradient tracking methods, communicates both local iterates and directions to track the centralized gradient. The second type uses local historical information to correct the bias in updates. Notable examples include EXTRA \cite{shi2015extra}, CLM \cite{ling2015dlm}, exact diffusion \cite{yuan2018exact}, and NIDS \cite{li2019decentralized}. Compared to gradient tracking, EXTRA and its variants only require communication of local iterates, thus reducing communication cost by 50\% per iteration. While EXTRA is originally designed for convex objectives, its extensions to nonconvex problems have been studied in \cite{bianchi2012convergence,di2016next,hong2017prox,tatarenko2017non,scutari2019distributed,sun2020improving,wai2017decentralized,zeng2018nonconvex}. A typical approach for handling nonconvexity relies on the primal-dual analysis framework of augmented Lagrangian methods introduced in \cite{hong2017prox}, where the key idea is to bound the difference between multipliers in terms of the primal variables using the optimality conditions of the primal subproblems, thereby controlling the ascent introduced by the dual (multiplier) updates. However, this analysis does not extend to manifold-constrained settings, as the tangent space involved in the optimality conditions varies with the primal iterates, making it difficult to  characterize the difference between multipliers. This is the main challenge of extending EXTRA-type methods from the Euclidean space to the Riemannian manifold.

For optimization problems over manifolds, additional challenges arise due to the nonconvexity and nonlinearity of the manifold constraint. To handle these issues, \cite{tron2012riemannian} proposes a Riemannian consensus algorithm based on geodesic distances, but this approach requires expensive operations such as exponential maps and vector transports. To design computationally affordable methods, the authors in \cite{chen2021local} consider consensus over the Stiefel manifold using Euclidean distances. They show that, under a restricted strong convexity condition, a Riemannian gradient method with multiple consensus steps can converge linearly. A decentralied Riemannian gradient descent methods with gradient tracking to ensure the exact convergence under constant step sizes is then designed in \cite{chen2021decentralized}. Later, the paper \cite{deng2023decentralized} proposes a projected Riemannian gradient method with gradient tracking for general compact manifolds. Their analysis is based on the concept of \textit{proximal smoothness}, which ensures that the projection onto the manifold is well-defined and Lipschitz continuous in a local neighborhood of the manifold. Following these works, many recent studies have aimed to improve communication and computation efficiency in decentralized manifold optimization. Examples include decentralized Riemannian conjugate gradient methods \cite{chen2023decentralizediclr}, decentralized natural gradient methods \cite{hu2023decentralized}, and decentralized gradient methods with communication compression \cite{hu2024improving}. A comparison with some existing algorithms is listed in Table \ref{tab:algorithm_comparison}.

Another line of work focuses on retraction-free methods. An early example is \cite{wang2022decentralized}, which uses an augmented Lagrangian approach to relax the manifold constraint through penalization. This idea has been further developed in \cite{sun2024global,sun2024local}. We note that for common compact submanifolds, the computational cost of performing a retraction or projection is often of the same order as computing the gradient of the penalty function, while ensuring exact feasibility of all iterates.

\begin{table}[htbp]
\centering
\caption{Comparison of distributed manifold optimization algorithms.}
\label{tab:algorithm_comparison}
\setlength{\tabcolsep}{0.2pt}
\begin{tabular}{lcccccc}
\toprule
\textbf{Algorithm} & \textbf{Commu. quan.}$^\diamond$ & \textbf{Commu. rounds}$^\dagger$ & \textbf{Step size}$^\triangleright$ & \textbf{Rate} & \textbf{Manifold}$^\ddagger$ \\
\midrule
DPRGD~\cite{deng2023decentralized} & local variable & $\mathcal{O}(\log_{\sigma_2}(1/n))$ & Diminishing & $\mathcal{O}(1/\sqrt{k})$ & Compact \\
DPRGT~\cite{deng2023decentralized} & (local variable, local direction) & $\mathcal{O}(\log_{\sigma_2}(1/n))$ & Constant & $\mathcal{O}(1/k)$ & Compact \\
DRDGD~\cite{chen2021decentralized} & local variable & $\mathcal{O}(\log_{\sigma_2}(1/n))$ & Diminishing & $\mathcal{O}(1/\sqrt{k})$ & Stiefel \\
DRGTA~\cite{chen2021decentralized} & (local variable, local direction) & $\mathcal{O}(\log_{\sigma_2}(1/n))$ & Constant & $\mathcal{O}(1/k)$ & Stiefel \\
DRCGD~\cite{chen2023decentralizediclr} & (local variable, local direction) & $\mathcal{O}(\log_{\sigma_2}(1/n))$ & Diminishing$^\clubsuit$ & not given & Stiefel \\
\textbf{REXTRA(ours)} & \textbf{local variable} & \textbf{1} & \textbf{Constant} & {$\mathcal{O}(1/k)$} & \textbf{Compact}  \\
\bottomrule
\end{tabular}
\begin{flushleft}
\footnotesize
$^\diamond$ Communication quantity per-iteration.\\
$^\dagger$ Communication rounds per-iteration.  \\
% $^\ddagger$ The communication costs per agent to achieve $\epsilon$-stationarity when $\epsilon \to 0$. \\
$^\clubsuit$ DRCGD also needs the step to satisfy specific Armijo condition and strong Wolfe condition. \\
$^\triangleright$ In real-world scenarios, constant step size is typically adopted to ensure stability and reduce tuning cost. \\
$^\ddagger$ Stiefel manifold is a special  compact manifold.
\end{flushleft}
\end{table}

\subsection{Contributions}
The goal of this paper is to develop communication-efficient decentralized methods with compact manifold constraints that address data heterogeneity. Our contributions are summarized as follows:

% We explain the challenge of nonconvexity for the analysis, possibly from primal-dual interpretation of EXTRA.
\begin{itemize}

    \item \textbf{A communication-efficient decentralized Riemannian algorithm.} We develop REXTRA, a Riemannian extension of the EXTRA algorithm \cite{shi2015extra}, that achieves exact convergence by communicating only local variables, even under data heterogeneity. This design substantially reduces communication overhead compared to existing decentralized Riemannian methods \cite{chen2021decentralized,deng2023decentralized}, which require both local variables and directions exchange. In addition, REXTRA permits single-round consensus updates per iteration, in contrast to the multi-round communication needed in prior work to ensure convergence on nonconvex manifolds.

    \item \textbf{Best-known iteration complexity guarantee.}
    To address the challenge of nonconvexity from manifold constraint, we incorporate the concept of \emph{proximal smoothness} for the manifold constraint, which ensures the manifold locally exhibits convex-like properties. Specifically, proximal smoothness guarantees a rapid decay in consensus error, ensuring all iterates remain within a favorable neighborhood of the manifold throughout the optimization process.
    Combined with a careful analysis of the deviation between local update directions and the centralized Riemannian gradient, we establish a global sublinear convergence rate of $\mathcal{O}(1/k)$, matching the best-known results in decentralized nonconvex optimization. To our knowledge, REXTRA is the first decentralized manifold optimization algorithm to achieve a global sublinear convergence rate under a constant step size while requiring only a single round of local iterate communication per iteration.

    \item \textbf{Superior empirical performance.}
    We validate REXTRA through experiments on decentralized principal component analysis and low-rank matrix completion. The results show that REXTRA not only achieves exact convergence with  fewer iterations, but also reduces total communication by over 50\% compared to state-of-the-art methods. Additionally, REXTRA supports larger step sizes and demonstrates faster convergence in practice.

\end{itemize}

 % Decentralized optimization in Euclidean space (i.e., $\mathcal{M} = \mathbb{R}^{d \times r}$) has been extensively studied over the past few decades. The decentralized (sub)-gradient descent (DGD) method \cite{tsitsiklis1986distributed, nedic2009distributed, yuan2016convergence} offers a straightforward approach by combining local gradient updates with consensus updates. However, the analysis indicates that DGD cannot achieve exact convergence to a stationary point when fixed step sizes are used. To address it, extensive primal-dual and gradient-tracking type methods are proposed with $\mathcal{O}(1/K)$ convergence rate, see, e.g., \cite{bianchi2012convergence,shi2015extra,xu2015augmented,qu2017harnessing,di2016next,tatarenko2017non, wai2017decentralized,yuan2018exact,hong2017prox,zeng2018nonconvex, scutari2019distributed, sun2020improving}. Nevertheless, these methods can not be directly adopted to the manifold setting due to the nonconvexity.

\subsection{Notation}
For a compact submanifold $\Mcal$ of $\mathbb{R}^{d \times r}$, we adopt the Euclidean inner product $\iprod{\cdot}{\cdot}$ as the Riemannian metric and use $\|\cdot\|$ to denote the associated Euclidean norm. The $n$-fold Cartesian product of $\Mcal$ is denoted as $\Mcal^n = \Mcal \times \cdots \times \Mcal$. For any $x \in \Mcal$, we denote the tangent space and normal space at $x$ by $T_x\Mcal$ and $N_x\Mcal$, respectively.  For a differentiable function $h: \R^{d\times r} \to \R$, we denote its Euclidean gradient by $\nabla h(x)$ and its Riemannian gradient by $\grad h(x)$. For a positive integer $n$, we define $[n] := \{1, \dots, n\}$. Let $\mathbf{1}_n \in \mathbb{R}^n$ denote the vector of all ones, and define $J := \frac{1}{n} \mathbf{1}_n \mathbf{1}_n^\top$.

We introduce the notation conventions for boldface variables. Taking $x$ as an example, we denote $x_i$ as the local variable at the $i$-th agent, and define the Euclidean average as $\hat{x} := \frac{1}{n} \sum_{i=1}^n x_i$. We use the boldface notations: $
\bx := [x_1^\top, \dots, x_n^\top]^\top \in \R^{(nd) \times r}$, where $\bx$ stacks all local variables. In the context of iterative algorithms, we use $x_{i,k}$ to denote the local variable at the $i$-th agent at iteration $k$, and define $\hat{x}_k := \frac{1}{n} \sum_{i=1}^n x_{i,k}$. Similarly, we introduce
$
\bx_k := [x_{1,k}^\top, \dots, x_{n,k}^\top]^\top \in \R^{(nd) \times r}$. Finally, we define the objective function as $f(\bx) := \sum_{i=1}^n f_i(x_i)$, and set $\bW := W \otimes I_d \in \R^{nd \times nd}$, where $\otimes$ denotes the Kronecker product.

\section{Preliminary}
In this section, we introduce basic concepts related to decentralized manifold optimization.

\subsection{Compact submanifolds and smoothness of the projection operator}
Compact smooth embedded submanifolds of Euclidean space, as described in \cite[Section 3.3]{absil2009optimization}, inherit the subspace topology from their ambient Euclidean space. Throughout this work, we focus on compact smooth submanifolds, which include important examples such as the (generalized) Stiefel manifold, the oblique manifold, and the symplectic manifold.

In the context of designing and analyzing decentralized optimization algorithms over manifolds, it has been recognized in \cite{deng2023decentralized} that the smoothness properties of the projection operator onto $\Mcal$ play a critical role, particularly within certain neighborhoods. To set the stage, we first introduce the notion of proximal smoothness. Given a point $y \in \mathbb{R}^{d \times r}$, we define its distance to $\Mcal$ and its nearest-point projection onto $\Mcal$ as
 $
 \text{dist}(x,\Mcal) : = \inf_{y\in \Mcal} \| y - x\|, ~~ \text{and}~~ \Pcal_{\mathcal{M}}(x) : = \arg\min_{y\in \mathcal{M}} \| y - x\|,
 $
 respectively. For any real number $\tau >0$, we define the $\tau$-tube around $\mathcal{M}$ as the set:
 $$
 U_{\mathcal{M}}(\tau): = \{x:~   \text{dist}(x,\mathcal{M}) < \tau \}, \;\; {\rm and} \;\; \bar{U}_{\Mcal}(\tau):= \{x:~   \text{dist}(x,\mathcal{M}) \leq \tau\}.
 $$
A closed set $\mathcal{M}$ is said to be $R$-proximally smooth if the projection $\Pcal_{\mathcal{M}}(x)$ is a singleton whenever $\text{dist}(x,\mathcal{M}) < R$. Following \cite{clarke1995proximal}, a $R$-proximally smooth set $\mathcal{M}$ satisfies:
\begin{itemize}
    \item[(i)] For any real $\tau\in (0,R)$, the estimate holds:
    \be \label{eq:lip-proj}
    \left\| \Pcal_{\mathcal{M}} (x) -\Pcal_{\mathcal{M}} (y)\right\| \leq \frac{R}{R-\tau}\|x - y\|,~~ \forall x,y \in \bar{U}_{\mathcal{M}}(\tau).
    \ee
    In particular, $\Pcal_{\Mcal}$ is asymptotic 1-Lipschitz as $\tau \rightarrow 0$.
    \item[(ii)] For any point $x\in \mathcal{M}$ and a normal vector $v\in N_x\Mcal$, the following inequality holds for all $y\in\Mcal$:
    \be \label{proximally-0}
    \iprod{v}{y-x} \leq \frac{\|v\|}{2R}\|y-x\|^2.
    \ee
\end{itemize}
It is known that any compact $C^2$ submanifold of Euclidean space is proximally smooth; see \cite{clarke1995proximal,balashov2021gradient,davis2020stochastic}. For instance, the Stiefel manifold is shown to be a $1$-proximally smooth set \cite{balashov2021gradient}. The asymptotic 1-Lipschitz property of the projection operator $\Pcal_{\Mcal}$ plays an important role in achieving linear convergence rates under single-step consensus schemes. Throughout this paper, we assume that the manifold $\Mcal$ in problem~\eqref{prob:original} is $R$-proximally smooth for some $R>0$.

Proximal smoothness ensures the Lipschitz continuity of $\Pcal_{\Mcal}$ but does not guarantee its differentiability or higher-order smoothness. However, due to the smooth structure of $\Mcal$, it is established in \cite[Lemma]{foote1984regularity} that $\Pcal_{\Mcal}$ is smooth within $U_R(\Mcal)$. This leads to the following lemma.
\begin{lemma}{\cite[Lemma 3]{deng2023decentralized}}
\label{lemma-project}
Given an $R$-proximally smooth compact submanifold $\Mcal$, for any $x \in\Mcal, u\in \{u\in \mathbb{R}^{d\times r}:\|u\|\leq \frac{R}{2}\}$,
there exists a constant $Q > 0$ such that
\be
\label{eq:projec-second-order1}
\| \Pcal_{\Mcal}(x + u)  - x - \Pcal_{T_x\Mcal}(u) \| \leq Q\|u\|^2.
\ee
\end{lemma}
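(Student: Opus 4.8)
The plan is to establish the bound \eqref{eq:projec-second-order1} by combining the smoothness of $\Pcal_{\Mcal}$ on the tube $U_{\Mcal}(R)$ with a second-order Taylor expansion, and to verify that the first-order term of that expansion is precisely $\Pcal_{T_x\Mcal}(u)$. Fix $x \in \Mcal$ and consider the map $g(u) := \Pcal_{\Mcal}(x+u)$ defined for $\|u\| \le R/2$, which is well within the tube $U_{\Mcal}(R)$ where Foote's regularity result guarantees $\Pcal_{\Mcal}$ is $C^\infty$ (and in particular $C^2$). Since $x \in \Mcal$, we have $g(0) = x$. The key identity to pin down is that the differential $Dg(0)$ equals the orthogonal projection $\Pcal_{T_x\Mcal}$ onto the tangent space: this follows because for $u \in T_x\Mcal$ the curve $x+tu$ has nearest-point projection tangent to the manifold with velocity $u$ at $t=0$, while for $u \in N_x\Mcal$ a sufficiently short normal segment projects back to $x$, so $Dg(0)$ kills the normal component. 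Writing $\R^{d\times r} = T_x\Mcal \oplus N_x\Mcal$ and checking $Dg(0)$ on each summand gives $Dg(0) = \Pcal_{T_x\Mcal}$.

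Next I would apply the second-order Taylor expansion with Lagrange (or integral) remainder to $g$ on the convex set $\{\|u\| \le R/2\}$:
\be
g(u) = g(0) + Dg(0)[u] + \mathcal{R}(u), \qquad \|\mathcal{R}(u)\| \le \tfrac{1}{2}\Big(\sup_{\|w\|\le R/2}\|D^2 g(w)\|\Big)\|u\|^2. \nn
\ee
Substituting $g(0) = x$ and $Dg(0)[u] = \Pcal_{T_x\Mcal}(u)$ yields
\[
\big\| \Pcal_{\Mcal}(x+u) - x - \Pcal_{T_x\Mcal}(u) \big\| \le Q_x \|u\|^2,
\]
where $Q_x := \tfrac{1}{2}\sup_{\|w\|\le R/2}\|D^2 g(w)\|$. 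Finally, to get a uniform constant $Q$ independent of $x$, I would invoke compactness of $\Mcal$: the second derivative of $\Pcal_{\Mcal}$ is continuous on the closed tube $\bar U_{\Mcal}(R/2)$, which is a compact set (closed and bounded, since $\Mcal$ is compact), hence its operator norm is bounded there by some finite $Q$; this bound then works simultaneously for every base point $x \in \Mcal$. One should take a little care that $\bar U_{\Mcal}(R/2)$ is genuinely compact — this uses that $\Mcal$ itself is compact so the tube is bounded, and it is closed by definition — and that $D^2\Pcal_{\Mcal}$ extends continuously up to the boundary of this slightly-shrunk tube, which is fine since $R/2 < R$.

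The main obstacle, and the part deserving the most care, is the identification $Dg(0) = \Pcal_{T_x\Mcal}$ rather than some other linear map. The cleanest way is to use the variational characterization of the projection: for $y$ in the tube, $y - \Pcal_{\Mcal}(y) \in N_{\Pcal_{\Mcal}(y)}\Mcal$. Differentiating the relation $y = \Pcal_{\Mcal}(y) + \big(y - \Pcal_{\Mcal}(y)\big)$ at $y = x$ in a direction $u$, and using that the second summand lies in the normal bundle while $D\Pcal_{\Mcal}(x)[u]$ lies in $T_x\Mcal$ (because $\Pcal_{\Mcal}$ maps into $\Mcal$, so its differential maps into the tangent space), one splits $u$ into its tangential and normal parts and reads off that $D\Pcal_{\Mcal}(x)$ is the identity on $T_x\Mcal$ and that the derivative of $y \mapsto y - \Pcal_{\Mcal}(y)$ restricted to $N_x\Mcal$ sees $D\Pcal_{\Mcal}(x)$ vanish on normal directions to leading order. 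This is exactly the classical fact that the differential of the nearest-point projection onto a $C^2$ submanifold at a point of the manifold is the orthogonal tangential projection; I would cite \cite{foote1984regularity} (or reprove it in two lines as above) and then the rest is routine Taylor estimation plus compactness.
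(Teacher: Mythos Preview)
The paper does not give its own proof of this lemma: it is simply quoted from \cite[Lemma 3]{deng2023decentralized} and used as a black box. Your proposal is a correct and standard derivation of the result --- smoothness of $\Pcal_{\Mcal}$ on the tube via Foote, identification of $D\Pcal_{\Mcal}(x)=\Pcal_{T_x\Mcal}$ at points $x\in\Mcal$, second-order Taylor remainder, and compactness of $\bar U_{\Mcal}(R/2)$ for the uniform constant --- so there is nothing to compare against beyond noting that your argument is exactly the kind of proof one would expect the cited reference to contain.
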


% \subsection{Consensus problem over a compact submanifold}
% To achieve the stationarity, it is necessary to consider consensus on the compact submanifold $\mathcal{M}$. However, the geodesic distance-based consensus problem involves logarithm mapping and is challenging to solve. To address this, we consider the following consensus problem over $\mathcal{M}$:
% \begin{equation}
% \label{eq:consensus}
% \min_{\mathbf{x}} \phi(\mathbf{x}) := \frac{1}{4} \sum_{i=1}^{n} \sum_{j=1}^{n} W_{ij} \|x_i - x_j\|^2, \quad \text{s.t. } x_i \in \mathcal{M},\ i \in [n]. \tag{3.1}
% \end{equation}
% The gradient of $\phi(\mathbf{x})$ is
% $
% \nabla \phi(\mathbf{x}) := [\nabla \phi_1(\mathbf{x})^\top, \nabla \phi_2(\mathbf{x})^\top, \cdots, \nabla \phi_n(\mathbf{x})^\top]^\top = (I_{nd} - W)\mathbf{x},
% $
% where
% $
% \nabla \phi_i(\mathbf{x}) := x_i - \sum_{j=1}^n W_{ij} x_j,\quad i \in [n].
% $

\subsection{Stationary point}

Denote by the undirected agent network $G := (\mathcal{V}, \mathcal{E})$, where $\mathcal{V} = \{1,2,\ldots,n\}$ is the set of all agents and $\mathcal{E}$ is the set of edges. We use the following standard assumptions on $W$, see, e.g., \cite{zeng2018nonconvex,chen2021decentralized}.

\begin{assumption} \label{assum:W}
We assume that the undirected graph $G$ is connected and $W$ satisfies the following conditions:
\begin{itemize}
  \item[(i)] $W = W^\top$, $W_{ij} \geq 0$ and $1 > W_{ii} > 0$ for all $i,j$;
  \item[(ii)] \textit{Eigenvalues of $W$ lie in $(-1,1]$. The second largest singular value $\sigma_2$ of $W$ lies in $ [0,1)$.}
\end{itemize}
\end{assumption}
The above assumption implies $\bW \bx = \bx$ if and only if $x_1 = \dots = x_n$.

Let $x_1,\cdots,x_n\in \Mcal$ represent the local copies of each agent and $\Pcal_{\Mcal}$ be the orthogonal projection onto $\Mcal$. Note that for $\{x_i\}_{i=1}^n \subset \Mcal$,
$$ \Pcal_{\Mcal}(\hat{x}) := {\rm argmin}_{y\in\Mcal}\sum_{i=1}^n \|y - x_i\|^2. $$
Any element $\bar{x}$ in $\Pcal_{\Mcal}(\hat{x})$ is the induced arithmetic mean of $\{x_i\}_{i=1}^n$ on $\Mcal$ \cite{sarlette2009consensus}.
By denoting $f(z) : = \frac{1}{n}\sum_{i=1}^n f_i(z)$, we define the $\epsilon$-stationary point of problem \eqref{prob:original} as follows.
\begin{definition} \label{def:station}
The set of points $\{x_1,x_2,\cdots,x_n\} \subset \Mcal$ is called an $\epsilon$-stationary point of \eqref{prob:original} if there exists $\bar{x} \in \Pcal_{\Mcal}(\hat{x})$ such that
\[ \frac{1}{n}\sum_{i=1}^n \| x_i - \bar{x}\|^2 \leq \epsilon \quad {\rm and} \quad \|\grad f(\bar{x})\|^2 \leq \epsilon. \]
\end{definition}
% In the following development, we always assure that $\hat{x} \in \bar{U}_{\Mcal}(\delta)$. Consequently, $\Pcal_{\Mcal}(\hat{x})$ is a singleton and we have $\bar{x} = \Pcal_{\Mcal}(\hat{x})$.

\section{A Riemannian EXTRA for decentralized optimization}
In this section, we design a Riemannian EXTRA method by generalizing its Euclidean counterpart, and then conduct its convergence analysis.

% In the subsequent analysis,  we denote $\hat{g}_k = \frac{1}{n}\sum_{i=1}^n \grad f_i(x_{i,k})$ and $\hbg_k = [\hat{g}_k^\top,\dots,\hat{g}_k^\top]^\top$, which is $n$ copies of $\hat{g}_k$.

\subsection{Riemannian EXTRA}
To present a Riemannian version of EXTRA, we first rewrite the iterative scheme of EXTRA in \cite{shi2015extra} into the following form:
\be \label{eq:extra-eqiu}
\begin{aligned}
    \bx_{k+1} & = \bW \bx_k + \bs_k, \\
    \bs_{k+1} & = (\bW - \bV) \bx_k + \bs_k - \alpha (\nabla f(\bx_{k+1})- \nabla f(\bx_k)),
\end{aligned}\ee
where $\alpha > 0$ is the stepsize, $\bV := V \otimes I_d$ with $V = \theta I_n + (1 -\theta) W, \theta \in (0, \frac{1}{2}]$. Then, a natrual extension to the manifold is given as follows:
\be \label{eq:R-extra}
\begin{aligned}
    \bx_{k+1} & = \Pcal_{ \Mcal^n} \left( \bW \bx_k + \bs_k \right), \\
    \bs_{k+1} & = (\bW - \bV) \bx_k + \bs_k - \alpha (\grad f(\bx_{k+1})- \grad f(\bx_k)).
\end{aligned}\ee
Here, we use the projection on manifold, i.e., $\Pcal_{\mathcal{M}}$ to ensure the feasibility in the first line, and use the Riemannian gradient instead of its Euclidean gradient in the second line. We present the detailed description of REXTRA in Algorithm \ref{alg}.

\begin{algorithm}[htbp]
\caption{Riemannian EXTRA for solving \eqref{prob:original}}
\begin{algorithmic}[1]
\REQUIRE  Initial point $\bx_0\in \mathcal{N}$,  step size $\alpha > 0$, $\bs_0 = -\alpha \grad f(\bx_0)$, set $k = 0$.
\WHILE{the stopping condition is not met}
\STATE $\bx_{k+1} = \Pcal_{\Mcal^n}(\bW \bx_k + \bs_k)$.
\STATE $\bs_{k+1} = (\bW - \bV) \bx_{k} + \bs_k - \alpha (\grad f(\bx_{k+1}) - \grad f(\bx_k))$.
\STATE Set $k=k+1$.
\ENDWHILE
\end{algorithmic}
\label{alg}
\end{algorithm}

Compared to existing decentralized manifold optimization algorithms designed to address data heterogeneity—such as DRGTA \cite{chen2021decentralized} and DPRGT \cite{deng2023decentralized}—REXTRA significantly reduces communication overhead. In particular, DRGTA and DPRGT require each agent to communicate both the local iterate and local update direction at every iteration; that is, agent $i$ must communicate $(x_{i,k}, s_{i,k})$ to its neighbors. In contrast, REXTRA only requires the exchange of the local iterate $x_{i,k}$, as the update in \eqref{eq:R-extra} depends solely on the aggregated iterates $\sum_{j =1}^n W_{ij} x_{j,k}$ and computes the Riemannian gradient locally. Furthermore, while DRGTA and DPRGT often necessitate multiple rounds of communication over the mixing matrix $W$ per iteration to ensure convergence, REXTRA achieves exact convergence using a single communication round.

\subsection{Convergence analysis} \label{sec-con}
In this subsection, we demonstrate the convergence of Algorithm~\ref{alg}. The novelty of our analysis is as follows: It departs from standard Euclidean assumptions by leveraging the proximal smoothness of the manifold, which ensures locally convex-like behavior and keeps all iterates within a well-behaved neighborhood (Lemma~\ref{lem:stay-neigh}). We quantify the consensus and gradient errors (Lemma~\ref{lem:bound-sk}) and establish a sufficient decrease in the global objective, even in the presence of data heterogeneity (Lemma~\ref{lem:sufficient}). These results yield an iteration complexity of $\mathcal{O}(\epsilon^{-1})$ (Theorem~\ref{thm}), matching the best-known results in decentralized nonconvex optimization \cite{chen2021decentralized,deng2023decentralized,qin2025convergence}.

It is worth mentioning that our approach differs from primal-dual methods by adopting a primal-only perspective that avoids introducing dual variables. This is crucial in manifold-constrained problems, where varying tangent spaces prevent explicit characterization of multiplier differences, rendering primal-dual analyses, such as \cite{hong2017prox}, inapplicable. Instead, we leverage proximal smoothness to ensure iterates remain within a favorable neighborhood, enabling direct control of consensus and optimality errors and establishing convergence without relying on dual variable updates.

Let us start with the assumptions on the objective function.
\begin{assumption} \label{ass:lip}
    For any $i = 1,\dots, n$, the function $f_i$ is $L_f$-smooth over the convex hull of $\Mcal$, ${\rm conv}(\Mcal)$, i.e., for any $x, y \in {\rm conv}(\Mcal)$,
    \[ \| \nabla f_i(x) - \nabla f_i(y)\| \leq L_f \|x - y\|.  \]
    Furthermore, the Euclidean gradient is bounded by $L_g$, i.e., $\max_{x \in \mathcal{M}} \| \nabla f_i(x) \| \le L_g, i \in [n].$
\end{assumption}
By the above assumption, it has been shown in \cite{deng2023decentralized} that there exists a constant $L = \max \left\{ L_f + \frac{1}{R}L_g, L_g,  L_f + L_gL_\Pcal \right\}$ such that for any $x$
\be\label{eq:quad}
\begin{aligned}
f_i(y) \leq f_i(x) + \iprod{\grad f_i(x)}{y-x} + \frac{L}{2}\|y-x\|^2, \\
\|\grad f_i(x) - \grad f_i(y)\| \leq L \|x -y\|,
\end{aligned}
\ee
where $L_{\Pcal}:= \max_{x, y \in {\rm conv}(\Mcal), x \ne y } \max_{u \in \R^{d\times r}, u \ne 0} \frac{\|\Pcal_{T_{x}\Mcal}(u) - \Pcal_{T_{y}\Mcal}(u) \|}{\|u\|\|y -x\|}$  is the Lipschitz constant of $\Pcal_{T_x\Mcal}$ over $x \in {\rm conv}(\Mcal)$. Note that $L_\Pcal = 2$ for $\Mcal$ being the Stiefel manifold \cite[Remark 3.5]{hu2023achieving}.

% For ease of notation, we denote $\mathbf{s}_k := \begin{bmatrix} \mathbf{s}_{1,k}^\top, \cdots, \mathbf{s}_{n,k}^\top \end{bmatrix}^\top.$
Donote $\hat{\bx} = {\bf 1} \otimes \hat{x},  \bar{\bx} = {\bf 1} \otimes \bar{x}, \hat{\bg} = {\bf 1} \otimes \hat{g}$.
We next define a neighborhood $\mathcal{N}(\delta):= \{\bx \in \mathcal{M}^n, \bs \in \R^{(nd) \times r}: \|(\bx - \bar{\bx}, \bs +\alpha \hat{\bg} )\| \leq \delta \}$ with $\delta > 0$. To proceed with the convergence, the key component is to show that under a proper initialization on the starting points and small enough step sizes, all the subsequent iterations will stay within a neighborhood where $\Pcal_{\Mcal}$ exhibits nice smoothness properties.
\begin{lemma} \label{lem:stay-neigh}
    Suppose that Assumptions \ref{assum:W}  and \ref{ass:lip} hold. Assume $(\bx_0, -\alpha \grad f(\bx_0)) \in \mathcal{N}(\delta)$ with $\delta < \min\left(\frac{1}{6}, \frac{1- \nu}{12}  \right)$ and $\nu := \|Q\|_2 < 1$, $\bar{\nu} = \nu + 12 \delta$ depending on $W$ and $V$, and $\alpha \leq \min\left( \frac{\delta}{L_g}, \frac{(1 - \bar{\nu} )\delta}{2\sqrt{5n} L_g} \right)$. Then, it holds that
    $  (\bx_{k+1}, \bs_{k+1}) \in \mathcal{N}(\delta). $
\end{lemma}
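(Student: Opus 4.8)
The statement asserts forward invariance of the neighborhood $\mathcal{N}(\delta)$, so the plan is to argue by induction on $k$: the base case is the hypothesis $(\bx_0,-\alpha\grad f(\bx_0))\in\mathcal{N}(\delta)$, and, assuming $(\bx_k,\bs_k)\in\mathcal{N}(\delta)$, the goal is $(\bx_{k+1},\bs_{k+1})\in\mathcal{N}(\delta)$. Write $a_k:=\|\bx_k-\bar\bx_k\|$, $b_k:=\|\bs_k+\alpha\hat{\bg}_k\|$ and $e_k:=\sqrt{a_k^2+b_k^2}$, so membership in $\mathcal{N}(\delta)$ is exactly $e_k\le\delta$. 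Before the main estimates I would record three structural facts. First, \emph{the dual average is pinned down}: with $\mathbf{J}:=J\otimes I_d$, applying $\mathbf{J}$ to the $\bs$-update and using $\mathbf{1}^{\top}(W-V)=\theta\,\mathbf{1}^{\top}(W-I)=0$ (double stochasticity of $W$) together with $\bs_0=-\alpha\grad f(\bx_0)$, a one-line induction gives $\frac1n\sum_i s_{i,k}=-\alpha\hat g_k$ for all $k$; hence $\bs_k+\alpha\hat{\bg}_k=(\mathbf{I}-\mathbf{J})\bs_k$, i.e.\ $b_k$ is exactly the disagreement of $\bs_k$, and no multiplier bookkeeping is needed. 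Second, \emph{induced-mean optimality}: since $\bar x_{k+1}\in\Pcal_{\Mcal}(\hat x_{k+1})$ minimizes $\sum_i\|x_{i,k+1}-y\|^2$ over $y\in\Mcal$, we get $a_{k+1}\le\|\bx_{k+1}-\bar\bx_k\|$, which removes the need to track the motion of $\bar x$. Third, \emph{the consensual offset is normal}: $\hat x_k-\bar x_k=\hat x_k-\Pcal_{\Mcal}(\hat x_k)\in N_{\bar x_k}\Mcal$, so $\Pcal_{T_{\bar\bx_k}\Mcal^n}$ kills $\mathbf{1}\otimes(\hat x_k-\bar x_k)$ while $\bW$ fixes it.

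Next I would linearize the projection. Write $\bW\bx_k+\bs_k=\bar\bx_k+\Delta_k$ with $\Delta_k:=\bW(\bx_k-\bar\bx_k)+(\bs_k+\alpha\hat{\bg}_k)-\alpha\hat{\bg}_k$; the hypotheses $a_k,b_k\le\delta$ and $\alpha\le\delta/L_g$ keep each block $\|(\Delta_k)_i\|$ small enough that Lemma~\ref{lemma-project} applies blockwise, giving $\bx_{k+1}=\bar\bx_k+\Pcal_{T_{\bar\bx_k}\Mcal^n}(\Delta_k)+\mathbf{r}_k$ with $\|\mathbf{r}_k\|\le Q\|\Delta_k\|^2$, while \eqref{eq:lip-proj} (on $\Mcal^n$, which is again $R$-proximally smooth) gives $\|\bx_{k+1}-\bx_k\|\le\frac{R}{R-\tau}\|\bW\bx_k+\bs_k-\bx_k\|=O(a_k+b_k+\alpha\sqrt n L_g)$. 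Combining the first identity with facts (ii) and (iii), the $\sigma_2$-contraction of $\bW$ on the disagreement subspace, and $\|\Pcal_{T_{\bar\bx_k}\Mcal^n}\|_2\le1$ yields $a_{k+1}\le\sigma_2 a_k+b_k+p^{a}_k$; applying $(\mathbf{I}-\mathbf{J})$ to the $\bs$-update (using $\bW-\bV=\theta(\bW-\mathbf{I})$), the Lipschitz bound on $\grad f$ from \eqref{eq:quad}, and the bound on $\|\bx_{k+1}-\bx_k\|$ yields $b_{k+1}\le c_1 a_k+c_2 b_k+p^{b}_k$ with $c_1,c_2$ built from $\theta,\sigma_2,\alpha L$. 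Packaging these as $\binom{a_{k+1}}{b_{k+1}}\preceq Q\binom{a_k}{b_k}+\binom{p^{a}_k}{p^{b}_k}$, where $Q$ is the coefficient matrix in the statement with $\|Q\|_2=\nu<1$ (possibly after passing to a suitable weighted norm, as in the Euclidean EXTRA analysis), and bounding the perturbation — the quadratic remainder $Q\|\Delta_k\|^2$ (small since $\|\Delta_k\|=O(e_k+\alpha\sqrt n L_g)=O(\delta)$), the tangent-space curvature terms involving $L_{\Pcal}$, and the $O(\alpha)=O(\delta)$ cross-terms — by $12\delta\, e_k$, plus the genuinely additive pieces (chiefly $\alpha\|\hat{\bg}_k\|\le\alpha\sqrt n L_g$) by $2\sqrt{5n}\,L_g\,\alpha$, I obtain $e_{k+1}\le(\nu+12\delta)e_k+2\sqrt{5n}\,L_g\,\alpha=\bar\nu e_k+2\sqrt{5n}\,L_g\,\alpha$. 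Since $e_k\le\delta$, since $\delta<(1-\nu)/12$ forces $\bar\nu<1$, and since $\alpha\le(1-\bar\nu)\delta/(2\sqrt{5n}L_g)$, this gives $e_{k+1}\le\bar\nu\delta+(1-\bar\nu)\delta=\delta$, closing the induction.

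The step I expect to be the main obstacle is setting up the coupled recursion so that $\|Q\|_2<1$. This is precisely where the EXTRA-type correction $\bV=\theta\mathbf{I}+(1-\theta)\bW$ is essential: it is the manifold analogue of the mechanism that lets Euclidean EXTRA converge under a constant step size while plain decentralized gradient descent only reaches a neighborhood — but unlike the Euclidean case the recursion must be read off through the linearized projection, so the curvature remainders from Lemma~\ref{lemma-project} and the mismatch between $T_{x_{i,k}}\Mcal$ and $T_{\bar x_k}\Mcal$ must be controlled uniformly by the proximal-smoothness estimates \eqref{eq:lip-proj}--\eqref{proximally-0} and shown to be absorbable into the $12\delta\, e_k$ slack; pinning down the constant $12$ (and $2\sqrt{5n}$ in the step-size bound) is then a matter of collecting these terms carefully, aided by $\delta<\frac{1}{6}$. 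A secondary, purely bookkeeping burden is carrying the factors of $\sqrt n$ consistently through the norm used in $\mathcal{N}(\delta)$. Throughout, fact (i) is the key simplification: it converts the dual error into an ordinary consensus error, keeping the whole argument primal-only.
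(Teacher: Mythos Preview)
Your plan is sound and would work, but it differs from the paper's proof in one structural way that makes yours harder than necessary. You linearize the projection via Lemma~\ref{lemma-project}, then separately manage (a) the quadratic remainder $\|\mathbf r_k\|\le Q\|\Delta_k\|^2$, (b) the normal-offset cancellation $\Pcal_{T_{\bar\bx_k}}(\hat\bx_k-\bar\bx_k)=0$, and (c) tangent-space curvature via $L_{\Pcal}$, absorbing all of these into the $12\delta\,e_k$ slack. The paper bypasses all of this: it never invokes Lemma~\ref{lemma-project} here. Instead it uses only the Lipschitz estimate \eqref{eq:lip-proj}, together with $\bar\bx_k=\Pcal_{\Mcal^n}(\hat\bx_k)$, to write directly
\[
\|\bx_{k+1}-\bar\bx_k\|\;=\;\|\Pcal_{\Mcal^n}(\bW\bx_k+\bs_k)-\Pcal_{\Mcal^n}(\hat\bx_k)\|\;\le\;\tfrac{1}{1-3\delta}\,\|\bW\bx_k+\bs_k-\hat\bx_k\|,
\]
the factor $\tfrac{1}{1-3\delta}$ being the entire curvature overhead. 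Combining with the $\bs$-recursion (where, as you note, $\hat\bs_k=-\alpha\hat\bg_k$) gives a block inequality with coefficient matrix $P=Q+\begin{pmatrix}\frac{3\delta}{1-3\delta}\bW&\frac{3\delta}{1-3\delta}I\\0&0\end{pmatrix}$, and $\delta<\tfrac16$ makes the perturbation $\le 12\delta$ in operator norm, yielding $\|P\|_2\le\bar\nu$ immediately. The state vector in this recursion is $(\bx_k-\hat\bx_k,\,\bs_k+\alpha\hat\bg_k)$, and one closes with the trivial $\|\bx_k-\hat\bx_k\|\le\|\bx_k-\bar\bx_k\|$; your facts (ii)--(iii) are then unnecessary for this lemma.

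A second difference: for the additive forcing the paper bounds $\|\grad f(\bx_{k+1})-\grad f(\bx_k)\|$ crudely by $2\sqrt n\,L_g$ (gradient boundedness), not via the Lipschitz constant $L$ as you propose. Together with $\|\hat\bg_k\|\le\sqrt n\,L_g$ this gives exactly the $2\sqrt{5n}\,L_g\alpha$ in the step-size condition. Your Lipschitz route would instead produce an $O(\alpha L)e_k$ term that feeds back into the linear part, requiring an $\alpha L$-smallness assumption not present in the statement. Finally, no weighted norm is needed: the paper works in the plain $\ell_2$ norm and cites an external result for $\nu=\|Q\|_2<1$.
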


Note that the neighborhood $\mathcal{N}(\delta)$ is jointly defined over both $\bx$ and $\bs$, in contrast to \cite{deng2023decentralized}, where it is defined solely with respect to $\bx$. This joint definition is crucial, as the variable $\bs$ is not explicitly communicated in our algorithm, and its error must be controlled in conjunction with $\|\bx - \bar{\bx}\|$.

% Another useful inequality that we establish is the control of the distance between the
% Euclidean mean $\hat{x}$ and the manifold mean $\bar{x}$ by the square of consensus error.

We also have bounds for both $\frac{1}{n}\|\bar{\mathbf{x}}_k - \mathbf{x}_k\|^2$ and $\frac{1}{n} \|\bs_k \|^2$ by the averaged Riemannian gradient norm.

\begin{lemma} \label{lem:bound-sk}
Suppose that assumptions in Lemma \ref{lem:stay-neigh} are satisfied. If $\alpha <  \frac{1}{8L \sqrt{\tilde{C}_1}} $, where $\tilde{C}_1 = \frac{4}{(1 - \bar{\nu})^2}$. It holds that there exist constants $C_0 $ and $C_1$ such that
\begin{equation} \label{eqn:summability}
\sum_{k=0}^{K}\|\bx_{k+1} - \bar{\bx}_{k+1}\|^2 +\| \bs_{k+1}\|^2 \le C_0 + \alpha^2 C_1 \sum_{k=0}^{K} \|\hat{\bg}_k \|^2 .
\end{equation}
Furthermore, there exist $C > 0$ independent of $n,L,\alpha$ such that
\[
\frac{1}{n} ( \|\bar{\mathbf{x}}_k - \mathbf{x}_k\|^2 + \|\bs_k \|^2 ) \leq CL^2 \alpha^2, ~~\forall k \geq 0.
\]
\end{lemma}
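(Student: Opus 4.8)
The plan is to prove Lemma~\ref{lem:bound-sk} in two stages, first establishing the summability bound \eqref{eqn:summability} and then deriving the uniform per-iteration bound. For the first stage, I would start from the update \eqref{eq:R-extra}. The key is to telescope the $\bs$-recursion: summing $\bs_{k+1} - \bs_k = (\bW - \bV)\bx_k - \alpha(\grad f(\bx_{k+1}) - \grad f(\bx_k))$ from $0$ to $k$ and using $\bs_0 = -\alpha\grad f(\bx_0)$ gives $\bs_{k+1} = \sum_{j=0}^{k}(\bW - \bV)\bx_j - \alpha\grad f(\bx_{k+1})$. Since $W - V = (1-\theta)(W - I)$, the matrix $\bW - \bV$ annihilates consensus, so $(\bW - \bV)\bx_j = (\bW - \bV)(\bx_j - \bar{\bx}_j)$, which couples $\|\bs_{k+1}\|$ directly to the consensus errors. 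Then I would bound $\|\grad f(\bx_{k+1})\|^2 \le 2\|\grad f(\bx_{k+1}) - \hat{\bg}_{k+1}\|^2 + 2\|\hat{\bg}_{k+1}\|^2$, using the Lipschitz bound \eqref{eq:quad} to control the first term by $L^2$ times the consensus error $\|\bx_{k+1} - \bar{\bx}_{k+1}\|^2$ (noting $\hat{g} = \grad f(\hat{x})$ or $\grad f(\bar{x})$ up to the $O(\text{dist})$ difference, which is itself controlled by Lemma~\ref{lemma-project} and the neighborhood membership from Lemma~\ref{lem:stay-neigh}).

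The consensus error $\|\bx_{k+1} - \bar{\bx}_{k+1}\|$ is controlled using proximal smoothness. Since $\bx_{k+1} = \Pcal_{\Mcal^n}(\bW\bx_k + \bs_k)$ and $\bar{\bx}_{k+1}$ is a consensus point close to $\Pcal_{\Mcal^n}(\bar{\bx}_k)$, the asymptotic $1$-Lipschitz property \eqref{eq:lip-proj} with $\tau \to 0$ (guaranteed by staying in $\mathcal{N}(\delta)$) gives $\|\bx_{k+1} - \bar{\bx}_{k+1}\| \lesssim \|(\bW - J\otimes I)(\bx_k) + (\bs_k + \alpha\hat{\bg}_k) - \ldots\|$ up to a contraction factor essentially $\bar{\nu}$ and second-order error terms from Lemma~\ref{lemma-project}. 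Here the quantity $\tilde C_1 = 4/(1-\bar\nu)^2$ enters: iterating this contraction-type recursion for the pair $(\bx_k - \bar{\bx}_k, \bs_k + \alpha\hat{\bg}_k)$ and summing a geometric series yields $\sum_k \|\bx_{k+1} - \bar{\bx}_{k+1}\|^2 + \|\bs_{k+1} + \alpha\hat{\bg}_{k+1}\|^2 \le \tilde C_1(\text{initial error}) + \alpha^2 \tilde C_1 L^2 \sum_k(\ldots)$, and the condition $\alpha < 1/(8L\sqrt{\tilde C_1})$ lets me absorb the $\alpha^2 L^2 \tilde C_1$ self-referential term into the left-hand side (absorbing at most $1/2$ of it). Rearranging and again using $\|\bs_{k+1}\|^2 \le 2\|\bs_{k+1} + \alpha\hat{\bg}_{k+1}\|^2 + 2\alpha^2\|\hat{\bg}_{k+1}\|^2$ produces \eqref{eqn:summability} with explicit $C_0$ (from the initial consensus/dual error, which is $O(\delta^2)$) and $C_1$.

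For the second stage, the uniform bound $\frac{1}{n}(\|\bar{\bx}_k - \bx_k\|^2 + \|\bs_k\|^2) \le CL^2\alpha^2$, I would not sum but instead iterate the same one-step contraction on the pair $(\bx_k - \bar{\bx}_k, \bs_k + \alpha\hat{\bg}_k)$. From Lemma~\ref{lem:stay-neigh} every iterate lies in $\mathcal{N}(\delta)$, so $\|\hat{\bg}_k\| \le \sqrt{n}\,L_g$ (bounded gradients from Assumption~\ref{ass:lip}). The recursion has the form $e_{k+1} \le \bar{\nu}\, e_k + \alpha\sqrt{5n}\,L_g$-type forcing (matching the constants appearing in Lemma~\ref{lem:stay-neigh}'s step-size bound), and since $e_0 \le \alpha L_g$-order by the initialization $\bs_0 = -\alpha\grad f(\bx_0)$ (so $\bx_0 - \bar{\bx}_0$ and $\bs_0 + \alpha\hat{\bg}_0$ are both $O(\alpha)$ up to the projection error), the geometric stability of the recursion with ratio $\bar{\nu} < 1$ gives $e_k \le \frac{\alpha\sqrt{5n}\,L_g}{1-\bar{\nu}} + \bar\nu^k e_0 = O(\sqrt{n}\,\alpha L)$ uniformly; squaring and dividing by $n$ gives the claim with $C$ depending only on $1-\bar\nu$ and universal constants. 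The main obstacle I anticipate is making the one-step contraction estimate for $\|\bx_{k+1} - \bar{\bx}_{k+1}\|$ genuinely rigorous: one must carefully choose the representative $\bar{x}_{k+1} \in \Pcal_{\Mcal}(\hat{x}_{k+1})$, track the mismatch between the manifold projection $\Pcal_{\Mcal^n}$ and its tangent-space linearization via Lemma~\ref{lemma-project}, and verify that all second-order error terms $Q\|\cdot\|^2$ are dominated by $12\delta$-type quantities so that the effective contraction factor is $\bar\nu = \nu + 12\delta < 1$ rather than merely $\nu$ — this bookkeeping, intertwined with the dual-variable error, is where the real work lies, whereas the telescoping and geometric-series steps are routine.
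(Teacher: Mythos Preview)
Your core engine---the one-step contraction recursion for the pair $(\bx_k-\bar\bx_k,\,\bs_k+\alpha\hat\bg_k)$, summed via a geometric-series argument to produce $\sum_k\|\bp_k\|^2\le \tilde C_0+\alpha^2\tilde C_1\sum_k\|\bc_k\|^2$, with the self-referential terms absorbed using $\alpha<1/(8L\sqrt{\tilde C_1})$, and then the same recursion iterated pointwise for the uniform bound---is exactly the paper's approach. Your handling of the second stage matches the paper essentially line by line (the paper also iterates $\|\bp_{k+1}\|\le\bar\nu\|\bp_k\|+2\sqrt{5n}\alpha L$ and sums the geometric tail).

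The opening telescoping step, however, is a false start and contains a real gap. Under data heterogeneity the quantity $\|\grad f(\bx_{k+1})-\hat\bg_{k+1}\|^2=\sum_i\|\grad f_i(x_{i,k+1})-\hat g_{k+1}\|^2$ is \emph{not} controlled by $L^2\|\bx_{k+1}-\bar\bx_{k+1}\|^2$: even at perfect consensus $x_{1}=\cdots=x_{n}=\bar x$ this term equals $\sum_i\|\grad f_i(\bar x)-\tfrac1n\sum_j\grad f_j(\bar x)\|^2$, which measures gradient dissimilarity across agents and can be $\Theta(nL_g^2)$. Your parenthetical ``$\hat g=\grad f(\hat x)$ or $\grad f(\bar x)$'' is likewise false in general. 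The paper never telescopes and never needs to bound $\|\grad f(\bx_{k+1})\|$: the forcing term in the paired recursion is $\bc_k=(\hat\bg_k,\,\grad f(\bx_{k+1})-\grad f(\bx_k))$, and only the \emph{time-difference} $\|\grad f(\bx_{k+1})-\grad f(\bx_k)\|$ must be controlled, which Lemma~\ref{lem:LIP-G} bounds by $4L\|\bx_k-\bar\bx_k\|+2L\|\bs_k\|$---this is exactly the self-referential term you then absorb. Drop the telescoping paragraph and start directly from the recursion~\eqref{eq:recur} already derived inside the proof of Lemma~\ref{lem:stay-neigh}; the rest of your plan then goes through. (Minor: $W-V=\theta(W-I)$, not $(1-\theta)(W-I)$.)
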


% Using the above lemma, we next use the following lemma to establish that $f(\bar{x}_{k})$ possesses the sufficient decrease property.
% \begin{lemma}[Sufficient decrease] \label{lem:sufficient}
%     Let $\{\bx_k \}$ be the sequence generated by Algorithm \ref{alg}. Suppose Assumptions \ref{assum:W}  and \ref{ass:lip} hold. If \( x_0 \in \mathcal{N} \),  \( \alpha < \frac{\delta}{192L} \), it follows that
% \begin{equation} \label{sufficient decrease}
% f(\bar{x}_{k+1}) \leq f(\bar{x}_k) - \left( \alpha - \frac{L^2}{2} \alpha^2 \right) \| \hat{g}_k \|^2 + D_1 \frac{1}{n} \| \bs_{k} \|^2 + D_2 \frac{1}{n} \| \bar{\bx}_k - \bx_k \|^2 + D_3 \frac{1}{n} \|\bx_{k+1} - \bar{\bx}_{k+1} \|^2,
% \end{equation}
% where \begin{align*}
% D_1 &= (2Q+ 2)L + 192C Q^2 \alpha^2 L^3 + 3L, \\
% D_2 &= 8LQ+ 9L + M_2^2 CL + 4 CL^3 (8Q+ 2\sqrt{n}L_{\mathcal{P}} + M_2)^2 \alpha^2, \\
% D_3 &= M_2^2 CL + 4 CL^3 M_2^2 \alpha^2.
% \end{align*}
% \end{lemma}

The following theorem shows that REXTRA, i.e., Algorithm \ref{alg}, yields $\mathcal{O}(\varepsilon^{-1})$ complexity in reaching an $\varepsilon$-stationary point.

\begin{theorem}\label{thm}
Let $\{\mathbf{x}_k\}$ be the sequence generated by Algorithm \ref{alg}. Suppose that the assumptions in Lemma \ref{lem:stay-neigh} are satisfied. If
$\alpha \leq \min\left\{ \frac{1}{8L \sqrt{\tilde{C}_1}},\frac{1}{4C_1( D_1 + D_2+ D_3 )}\right\},\nonumber
$
\textit{where $C_1,D_1,D_2,D_3$ are given in the proof in \ref{lem:sufficient}, it follows that}
\[
\min_{k\leq K}\frac{1}{n}\|\mathbf{x}_k-\bar{\mathbf{x}}_k\|^2 = O\left(\frac{\alpha}{K}\right),\quad
\min_{k\leq K}\|\mathrm{grad}f(\bar{x}_k)\|^2=O\left(\frac{1}{\alpha K}\right).
\]
\end{theorem}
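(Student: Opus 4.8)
The plan is to combine the three preceding lemmas into a single telescoping inequality. The starting point is the sufficient-decrease estimate of Lemma~\ref{lem:sufficient}, which should control $f(\hat x_{k+1})$ (or $f(\bar x_{k+1})$) in terms of $f(\hat x_k)$, minus a positive multiple of $\alpha\|\hat\bg_k\|^2$, plus error terms of the form (const)$\cdot\alpha\cdot(\|\bx_k-\bar\bx_k\|^2+\|\bs_k\|^2)$ and possibly a cross term $\|\bx_{k+1}-\bar\bx_{k+1}\|^2$. The key structural fact is that these consensus/dual error terms are summable against $\sum_k\|\hat\bg_k\|^2$ via the summability estimate~\eqref{eqn:summability} of Lemma~\ref{lem:bound-sk}: $\sum_{k=0}^K(\|\bx_{k+1}-\bar\bx_{k+1}\|^2+\|\bs_{k+1}\|^2)\le C_0+\alpha^2 C_1\sum_{k=0}^K\|\hat\bg_k\|^2$. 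So I would first write out the decrease inequality, sum it over $k=0,\dots,K$, telescope the $f$-terms, and substitute~\eqref{eqn:summability} for the accumulated error terms.

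Concretely, after summing I expect something of the shape
\[
f(\hat x_{K+1}) \le f(\hat x_0) - c_1\alpha\sum_{k=0}^K\|\hat\bg_k\|^2 + \alpha(D_1+D_2+D_3)\Bigl(C_0+\alpha^2 C_1\sum_{k=0}^K\|\hat\bg_k\|^2\Bigr),
\]
where the constants $D_1,D_2,D_3$ collect the coefficients of the various error terms in Lemma~\ref{lem:sufficient} and $c_1>0$ is the coefficient of the descent term. Choosing $\alpha\le \frac{1}{4C_1(D_1+D_2+D_3)}$ makes the coefficient $\alpha^3 C_1(D_1+D_2+D_3)$ of $\sum\|\hat\bg_k\|^2$ on the right at most $\tfrac14 c_1\alpha$ — after possibly also absorbing $c_1$ into the constant — so that a fixed positive fraction of the descent term survives:
\[
\tfrac12 c_1\alpha\sum_{k=0}^K\|\hat\bg_k\|^2 \le f(\hat x_0)-f_{\min}+\alpha(D_1+D_2+D_3)C_0 =: B,
\]
using that $f$ is bounded below on the compact set $\conv(\Mcal)$. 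Hence $\min_{k\le K}\|\hat\bg_k\|^2 \le \frac{1}{K+1}\sum_{k=0}^K\|\hat\bg_k\|^2 = O\!\bigl(\frac{1}{\alpha K}\bigr)$. The remaining point is to convert $\|\hat\bg_k\|^2$ into $\|\grad f(\bar x_k)\|^2$: since $\hat g$ is the Euclidean average $\tfrac1n\sum_i\grad f_i(x_{i,k})$ and $\grad f(\bar x_k)=\tfrac1n\sum_i\grad f_i(\bar x_k)$ (up to the tangent-space projection subtlety), their difference is bounded using the Lipschitz bound~\eqref{eq:quad} by $O(L)\cdot\tfrac1n\sum_i\|x_{i,k}-\bar x_k\|$, which is $O(L^2\alpha)$ by the second bound in Lemma~\ref{lem:bound-sk}; squaring and using $\|a\|^2\le 2\|b\|^2+2\|a-b\|^2$ gives $\|\grad f(\bar x_k)\|^2 = O(\|\hat\bg_k\|^2)+O(L^4\alpha^2)$, and for $\alpha$ small the first term dominates, yielding the claimed $O(1/(\alpha K))$. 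For the consensus bound, combine~\eqref{eqn:summability} with the just-obtained $\sum_k\|\hat\bg_k\|^2\le B/(c_1\alpha/2)=O(1/\alpha)$ to get $\sum_{k=0}^K\|\bx_k-\bar\bx_k\|^2 \le C_0+\alpha^2C_1\cdot O(1/\alpha)=O(1)+O(\alpha)=O(1)$; but a sharper accounting that carries the $\alpha$-dependence through (noting $C_0$ itself scales like $\alpha^2$ times a norm of the initial residual, or is otherwise $O(\alpha)$ under the initialization in $\mathcal N(\delta)$ with the step-size constraints of Lemma~\ref{lem:stay-neigh}) gives $\sum_{k=0}^K\|\bx_k-\bar\bx_k\|^2=O(\alpha)$, hence $\min_{k\le K}\tfrac1n\|\bx_k-\bar\bx_k\|^2 = O\!\bigl(\frac{\alpha}{K}\bigr)$.

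The main obstacle I anticipate is bookkeeping the step-size thresholds and constants consistently across the three lemmas — in particular making sure that the coefficient of $\sum\|\hat\bg_k\|^2$ coming from substituting~\eqref{eqn:summability} into Lemma~\ref{lem:sufficient} is genuinely dominated by the descent coefficient under the stated $\alpha\le\frac{1}{4C_1(D_1+D_2+D_3)}$, and that the ``$O(1/(\alpha K))$'' versus ``$O(\alpha/K)$'' scalings come out with the right powers of $\alpha$. The other delicate point is the passage from $\|\hat\bg_k\|$ to $\|\grad f(\bar x_k)\|$: one must be careful that $\hat g$ (an average of tangent vectors living in different tangent spaces) is compared to $\grad f(\bar x_k)$ correctly, using proximal smoothness / Lemma~\ref{lemma-project} to control the tangent-space projection discrepancies, and that this discrepancy is lower order relative to the main term once $\alpha$ is small — this is exactly where the second estimate of Lemma~\ref{lem:bound-sk}, $\tfrac1n(\|\bar\bx_k-\bx_k\|^2+\|\bs_k\|^2)\le CL^2\alpha^2$, is essential. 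Everything else is routine telescoping and constant-chasing.
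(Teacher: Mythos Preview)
Your proposal is essentially the same strategy as the paper's proof: sum the sufficient-decrease inequality of Lemma~\ref{lem:sufficient} over $k$, telescope, plug in the summability bound~\eqref{eqn:summability} from Lemma~\ref{lem:bound-sk} for the accumulated error terms, absorb those errors into the descent term using the step-size condition $\alpha\le\frac{1}{4C_1(D_1+D_2+D_3)}$, and then translate $\|\hat g_k\|$ into $\|\grad f(\bar x_k)\|$ via the Lipschitz bound on $\grad f_i$ and the uniform consensus estimate in Lemma~\ref{lem:bound-sk}. The paper does exactly this.

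One small slip worth fixing as you write it up: the error terms in Lemma~\ref{lem:sufficient} do \emph{not} carry an extra factor of $\alpha$; they are $D_1\tfrac1n\|\bs_k\|^2+D_2\tfrac1n\|\bx_k-\bar\bx_k\|^2+D_3\tfrac1n\|\bx_{k+1}-\bar\bx_{k+1}\|^2$. So after substituting~\eqref{eqn:summability} you get $C_1(D_1+D_2+D_3)\alpha^2\sum_k\|\hat g_k\|^2$ (an $\alpha^2$, not $\alpha^3$), and the stated condition $\alpha\le\frac{1}{4C_1(D_1+D_2+D_3)}$ is precisely what makes this at most $\tfrac{\alpha}{4}\sum_k\|\hat g_k\|^2$. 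Your own displayed ``shape'' and the subsequent sentence about $\alpha^3$ are internally inconsistent; dropping the spurious $\alpha$ fixes it and lines up with the threshold in the theorem.
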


\begin{rmk}
   According to Theorem \ref{thm}, {REXTRA} achieves a convergence rate of \(\mathcal{O}(1/K)\), matching the best known results among decentralized manifold optimization algorithms. Specifically, this convergence rate is comparable to {DPRGT} and {DRGTA}, and improves upon the \(\mathcal{O}(1/\sqrt{K})\) rate achieved by {DPRGD} and {DRDGD}. Moreover, {REXTRA} achieves this optimal rate while requiring only a single communication round per iteration.
\end{rmk}

\section{Numerical experiments} \label{sec-num}
In this section, we compare our proposed REXTRA (Algorithm \ref{alg}) with DRDGD \cite{chen2021decentralized} , DPRGD\cite{deng2023decentralized},
DRGTA\cite{chen2021decentralized}, and DPRGT\cite{deng2023decentralized} in decentralized principal component analysis and decentralized low-rank matrix completion problems.

\subsection{Decentralized principal component analysis}

The decentralized principal component analysis (PCA) problem seeks a subspace for dimensionality reduction that preserves the maximal variation of data distributed across multiple agents. Mathematically, it can be formulated as
\begin{equation}
\label{eq:dpca}
\min_{\bx \in \mathcal{M}^n} -\frac{1}{2n} \sum_{i=1}^n \mathrm{tr}(x_i^\top A_i^\top A_i x_i), \quad \text{s.t.} \quad x_1 = \cdots = x_n,
\end{equation}
where $
\mathcal{M}^n := \mathrm{St}(d, r)^n$,
$\mathrm{St}(d, r)$ denotes the Stiefel manifold, $n$ is the number of agents, and $A_i \in \mathbb{R}^{m_i \times d}$ represents the local data matrix stored at agent $i$ with $m_i$ samples. Note that for any solution $x^*$ to \eqref{eq:dpca}, the transformed point $x^*Q$ is also a solution for any orthogonal matrix $Q \in \mathbb{R}^{r \times r}$. To properly measure the distance between two points $x$ and $x^*$, we define
\[
d_s(x, x^*) := \min_{\substack{Q \in \mathbb{R}^{r \times r},\; Q^\top Q = QQ^\top = I_r}} \|xQ - x^*\|.
\]

\subsubsection{ Synthetic dataset}
We fix $m_1 = \dots = m_n = 1000$, $d = 10$, and $r = 5$. A matrix $B \in \mathbb{R}^{1000n \times d}$ is generated and decomposed via singular value decomposition as $
B = U \Sigma V^\top,$
where $U \in \mathbb{R}^{1000n \times d}$ and $V \in \mathbb{R}^{d \times d}$ are orthogonal matrices, and $\Sigma \in \mathbb{R}^{d \times d}$ is a diagonal matrix. To control the distribution of singular values, we set $\tilde{\Sigma} = \mathrm{diag}(\xi^j)$ for a fixed $\xi \in (0, 1)$. We then construct
$
A = U \tilde{\Sigma} V^\top \in \mathbb{R}^{1000n \times d},
$
and split the rows of $A$ uniformly at random into $n$ disjoint subsets $\{A_i\}$ of equal size. It is straightforward to verify that the first $r$ columns of $V$ form a solution to~\eqref{eq:dpca}. In our experiments, we set $\xi = 0.8$ and $n = 8$. We choose Erdos–Rényi (ER) network with connection probabilities $p=0.6$ as the graph and the mixing matrix $W$ is fixed as the Metropolis constant edge weight matrix following \cite{shi2015extra}. We set the maximum number of epochs to 2000 and terminate early if $\|{\rm grad} f(\bar{x}_k)\| < 10^{-8}$. All algorithms are implemented with constant step sizes. We perform a grid search over the set $\{1,2,4,6,8\} \times \{10^{-5}, 10^{-4}, 10^{-3}, 10^{-2}\}$ to select the best step size for each algorithm.

Figure~\ref{fig:SD3}, where the communication quantity refers to the total number of entries communicated, shows that REXTRA achieves the fastest convergence among all compared algorithms in terms of total communication quantities to reach the target accuracy. We also see from Figure \ref{fig:dd1} that REXTRA converges much faster than DPRGT by supporting a larger step size.  Additional results of REXTRA on various network graphs and batch sizes are provided in Appendix~\ref{app:num}, where denser graphs lead to faster convergence and larger batch sizes reduce the convergence horizon.
% Consequently, REXTRA can accommodate larger step sizes and reduce the total communication cost by more than 50\% compared to the other methods.

\begin{figure}[htp]
    \centering
    {\includegraphics[width=0.24\textwidth]{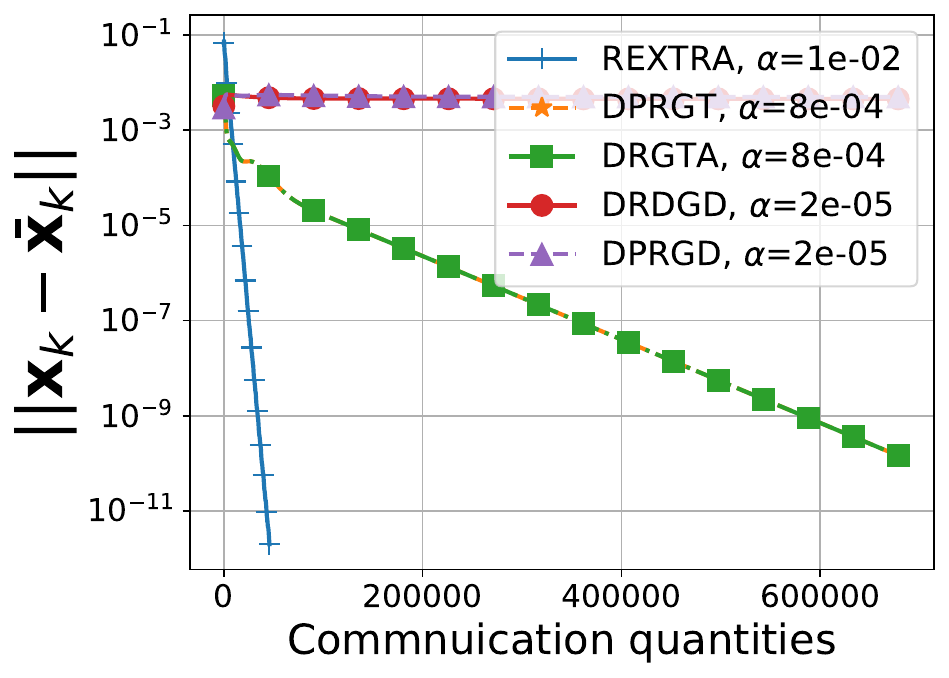}}
    \hfill
    {\includegraphics[width=0.24\textwidth]{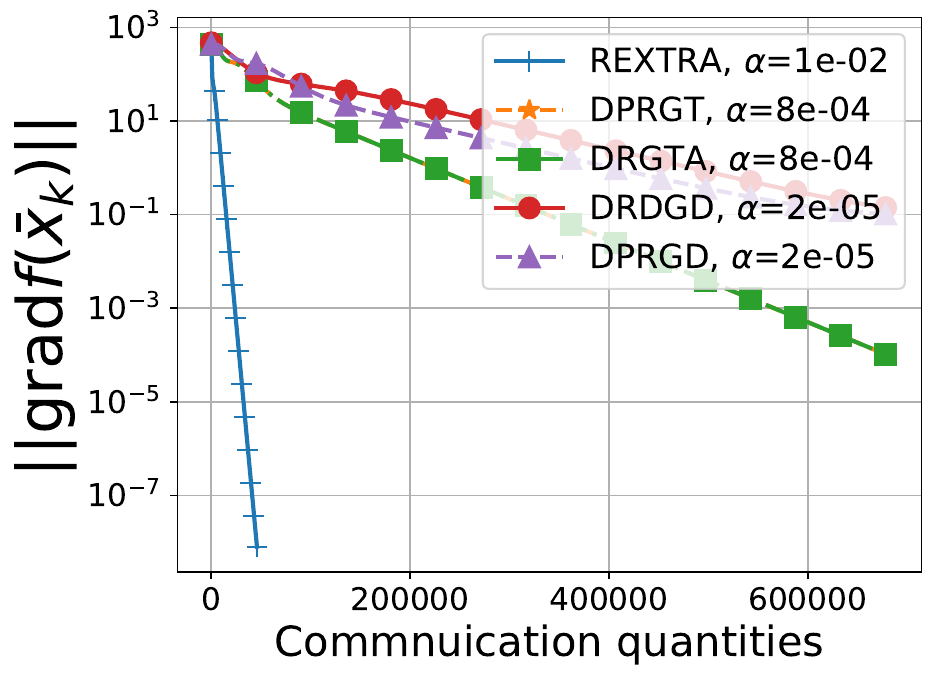}\label{fig:sub10}}
    \hfill
    {\includegraphics[width=0.24\textwidth]{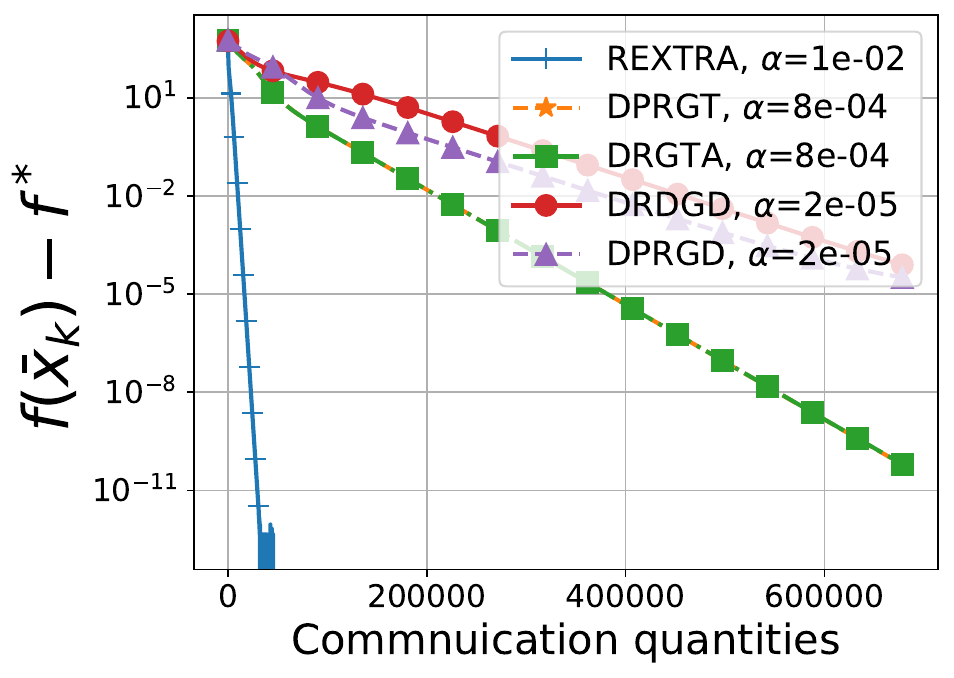}\label{fig:sub13}}
    {\includegraphics[width=0.24\textwidth]{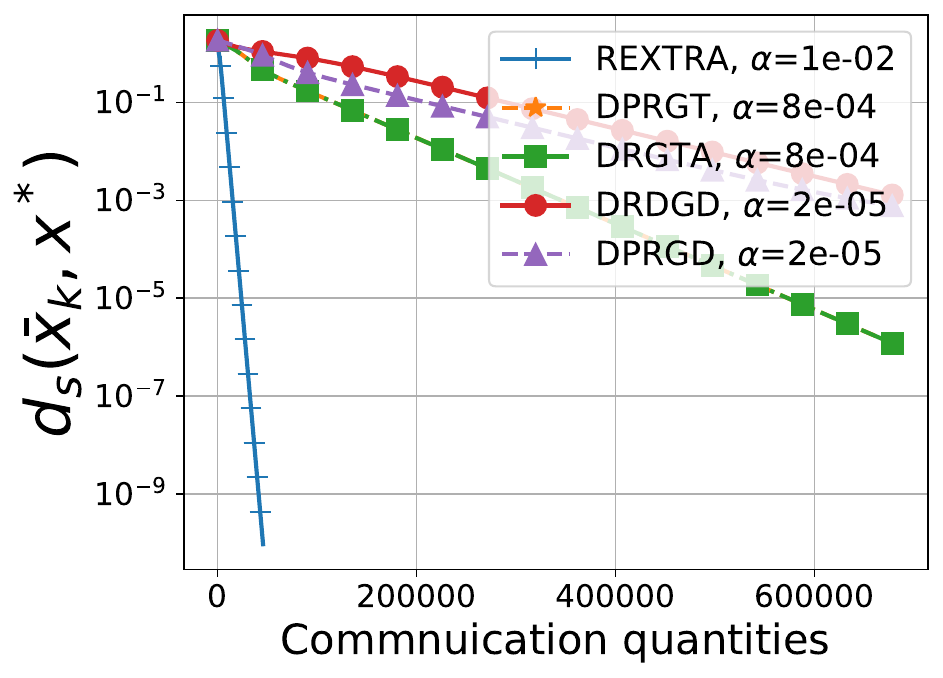}}
    \caption{Results of communication quantities for the PCA problem on the synthetic dataset.}
    \label{fig:SD3}
\end{figure}

\begin{figure}[H]
    \centering
    {\includegraphics[width=0.24\textwidth]{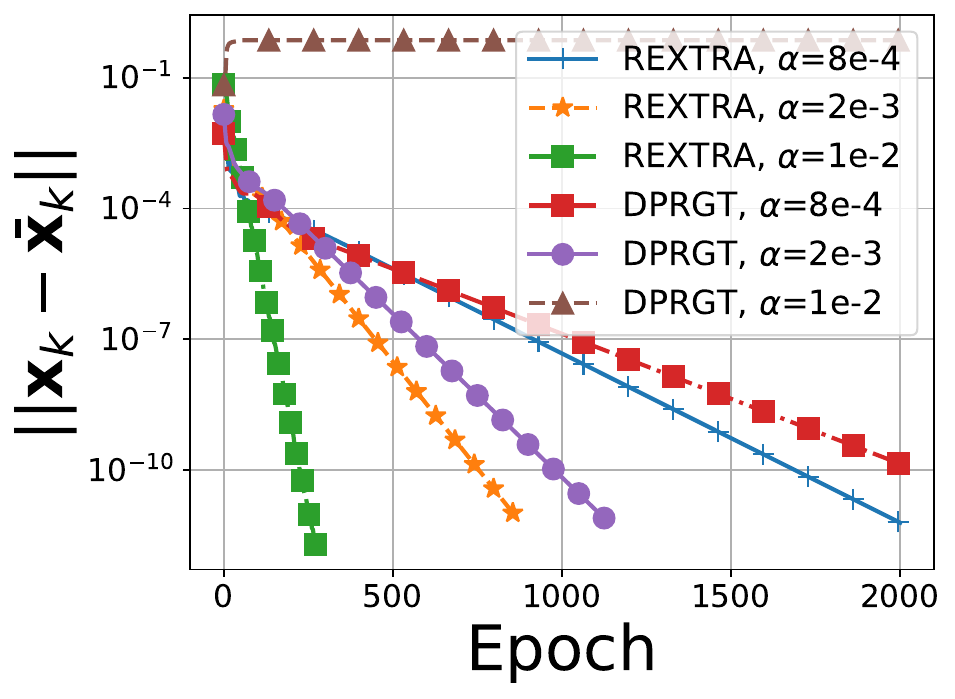}}
    \hfill
    {\includegraphics[width=0.24\textwidth]{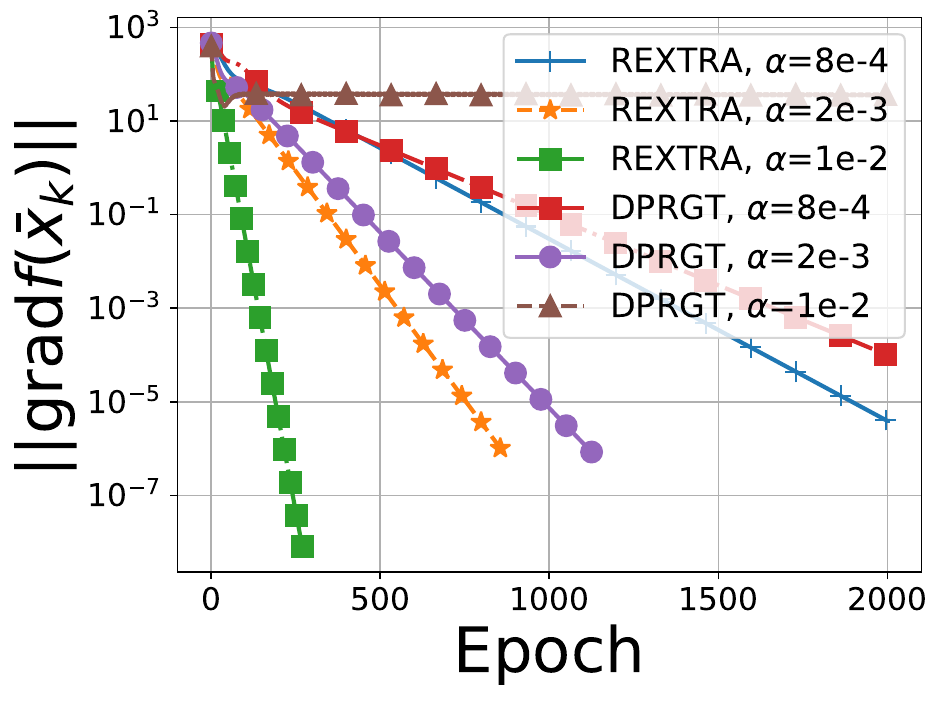}}
    \hfill
    {\includegraphics[width=0.24\textwidth]{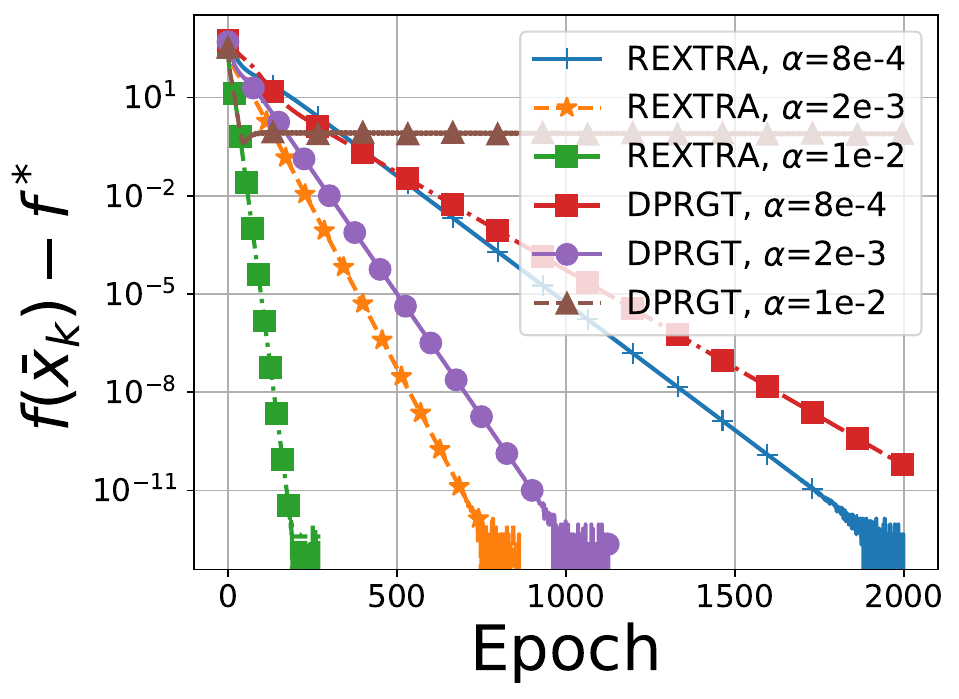}}
    {\includegraphics[width=0.24\textwidth]{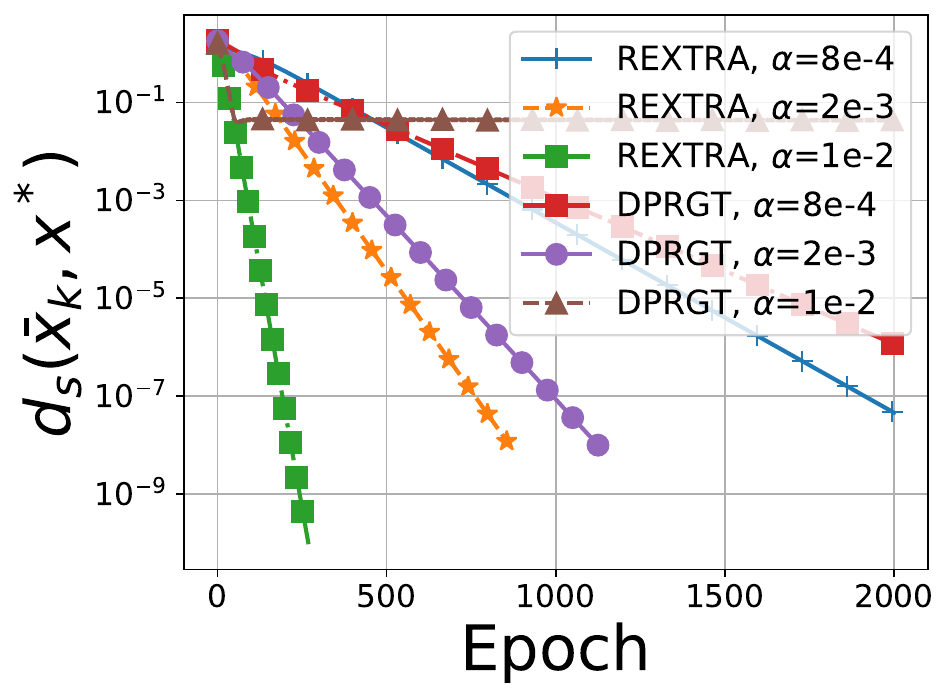} }
    \caption{Results of different stepsizes for the PCA problem on the synthetic dataset.}
    \label{fig:dd1}
\end{figure}

\subsubsection{ Mnist dataset}
To further evaluate the efficiency of REXTRA, we conduct numerical experiments on the MNIST dataset~\cite{lecun1998mnist}. The dataset contains 60,000 handwritten digit images of size $28 \times 28$, which are used to generate the local matrices $A_i$. We first normalize the pixel values by dividing by 255 and then randomly partition the data into $n = 8$ agents, each with an equal number of samples. As a result, each agent holds a local matrix $A_i$ of dimension $\frac{60000}{n} \times 784$. We aim to compute the top $r = 5$ principal components, with ambient dimension $d = 784$. Similar to the previous setting, we employ constant step sizes for all algorithms, selecting the best step size for each method through a grid search over the set $\{1,2,4,8\} \times \{10^{-6}, 10^{-5}, 10^{-4}, 10^{-3}\}$. As shown in Figure~\ref{fig:mnist1}, REXTRA allows larger step sizes and outperforms the compared algorithms in terms of convergence. Moreover, REXTRA saves  communication compared to baselines. To validate that REXTRA can support larger stepsize, we present the performance of REXTRA abd DPRGT with different stepsize  in Figure \ref{fig:dd2}, where DPGRT diverges when a larger stepsize is used. Additional results of REXTRA on various network graphs and batch sizes are provided in Appendix~\ref{app:num}.

\begin{figure}[htp]
    \centering
    {\includegraphics[width=0.24\textwidth]{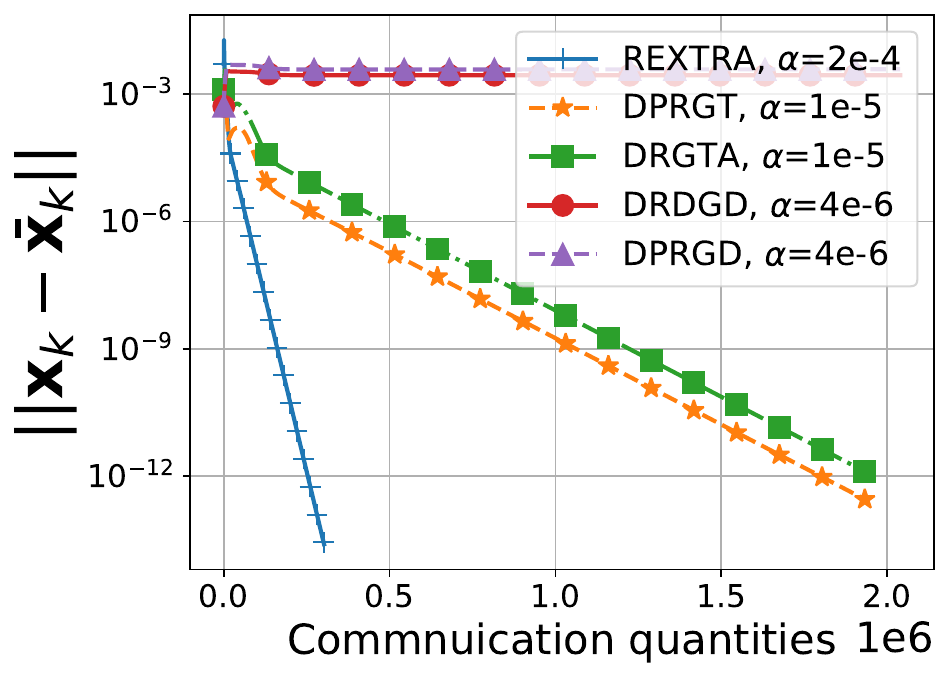}}
    \hfill
    {\includegraphics[width=0.24\textwidth]{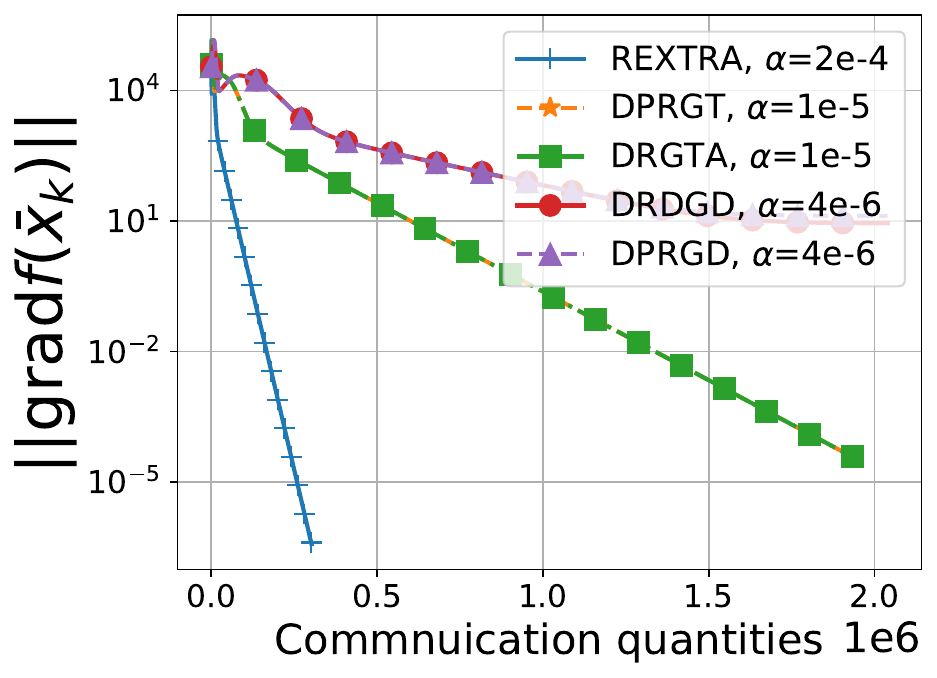}}
    \hfill
    {\includegraphics[width=0.24\textwidth]{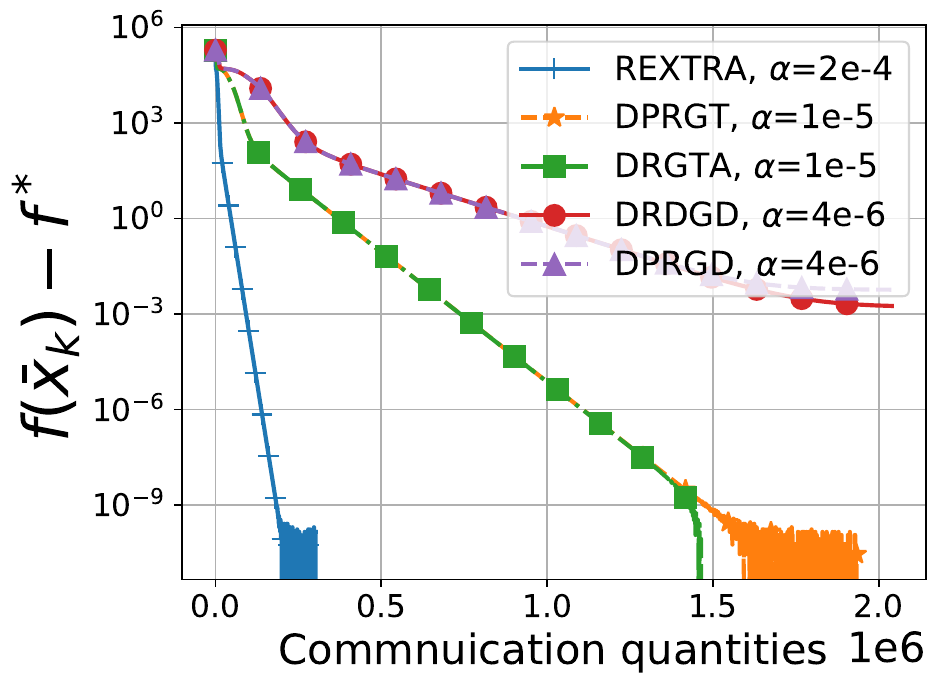}}
    {\includegraphics[width=0.24\textwidth]{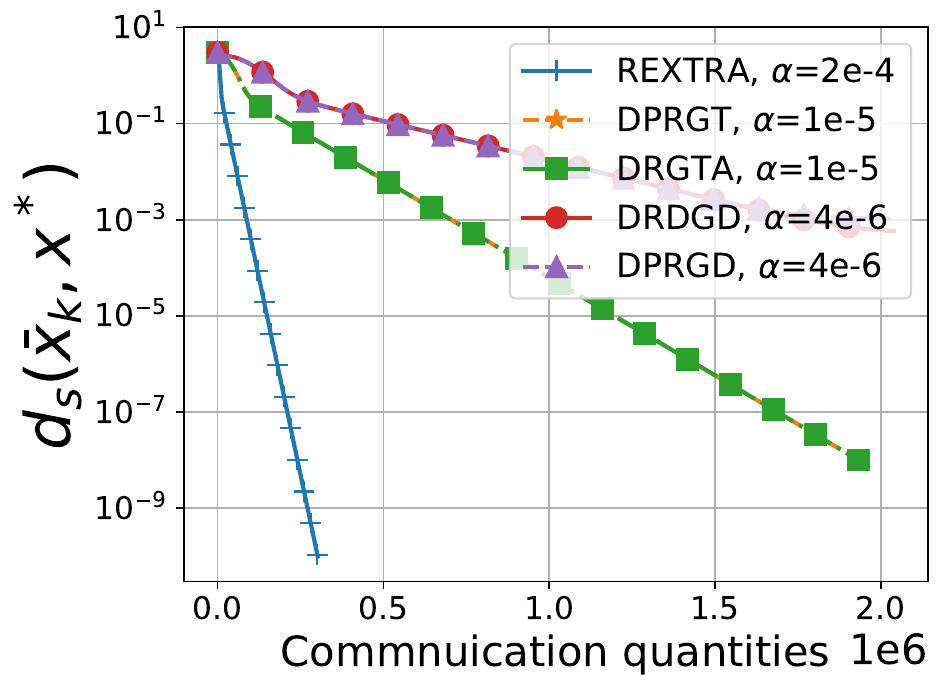}}
    \caption{Results of communication quantities for the PCA problem on the Mnist dataset.}
    \label{fig:mnist1}
\end{figure}

\begin{figure}[H]
    \centering
    {\includegraphics[width=0.24\textwidth]{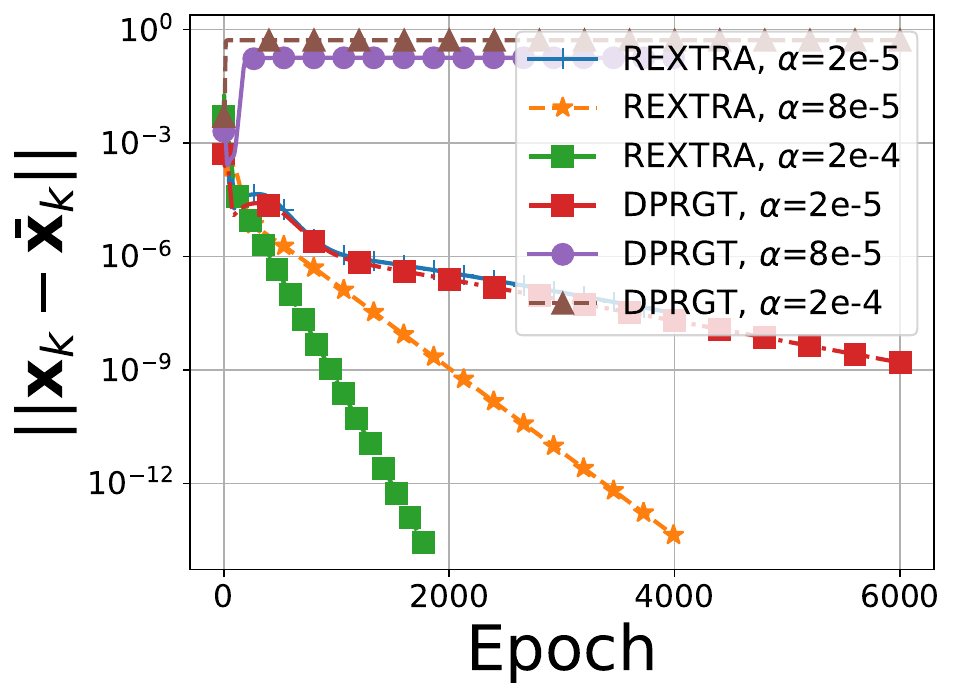}}
    \hfill
    {\includegraphics[width=0.24\textwidth]{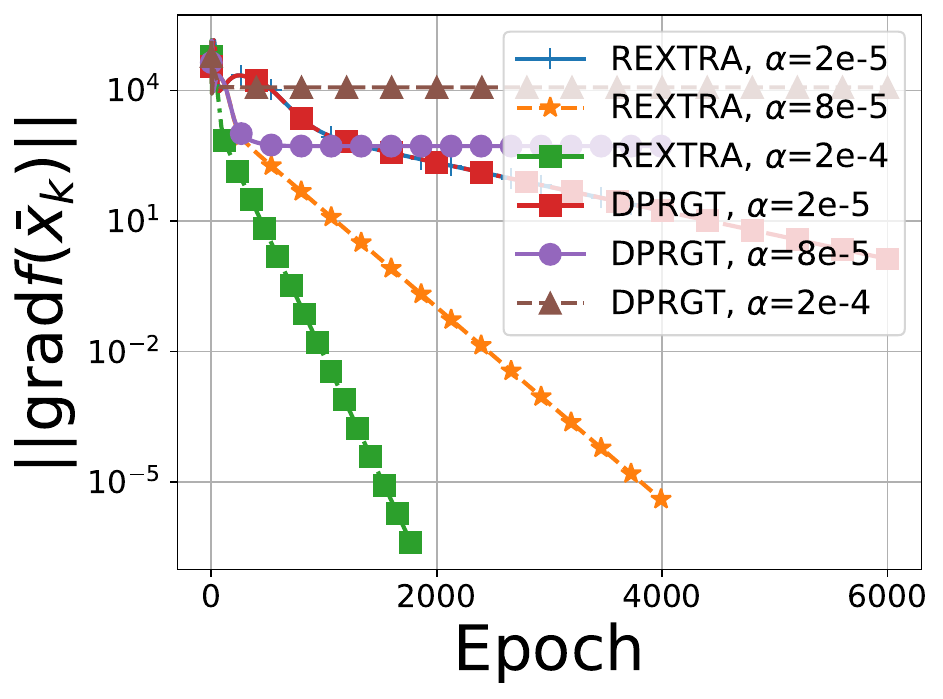}}
    \hfill
    {\includegraphics[width=0.24\textwidth]{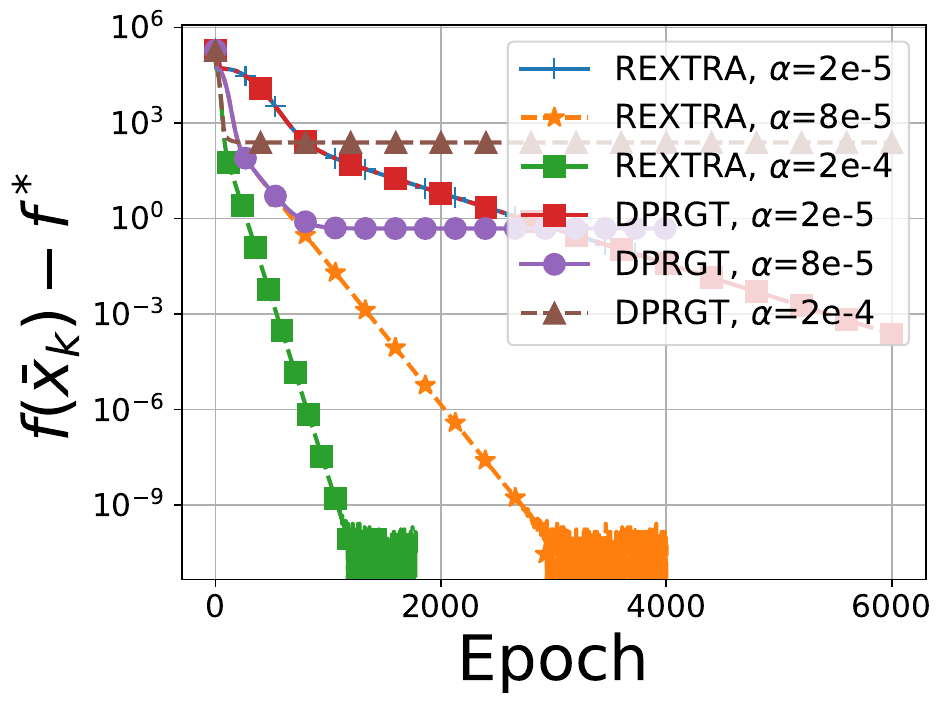}}
    {\includegraphics[width=0.24\textwidth]{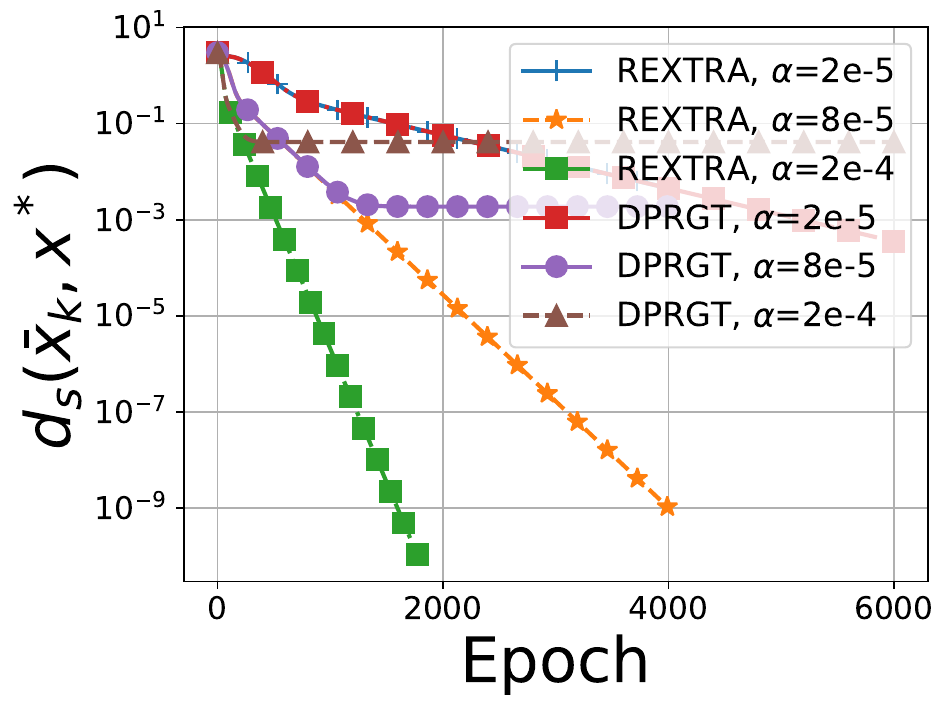}}
    \caption{Results of different stepsizes for the PCA problem on the Mnist dataset.}
    \label{fig:dd2}
\end{figure}

\subsection{Decentralized low-rank matrix completion}
Low-rank matrix completion (LRMC) aims to reconstruct a matrix $A \in \mathbb{R}^{d \times T}$ of low rank from a subset of its observed entries. Given the index set $\Omega$ containing the locations of known elements, the rank-$r$ LRMC problem can be formulated as
\begin{equation}
\label{eq:lrmc}
\min_{X \in \mathrm{Gr}(d, r), \, V \in \mathbb{R}^{r \times T}} \frac{1}{2} \left\| \mathcal{P}_\Omega \odot (XV - A) \right\|^2,
\end{equation}
where $\mathrm{Gr}(d, r)$ denotes the Grassmann manifold of all $r$-dimensional subspaces in $\mathbb{R}^d$, $\odot$ represents the Hadamard product between matrices or vectors, and the $0$-$1$ matrix $\mathcal{P}_\Omega$ is defined as $(\mathcal{P}_\Omega)_{ij} = 1$ if $(i,j) \in \Omega$ and 0 otherwise.

In the decentralized setting, the partially observed matrix $\mathcal{P}_\Omega \odot A $ is partitioned column-wise into $n$ equally sized blocks, denoted as $A_1, A_2, \ldots, A_n$. Since the Grassmann manifold $\mathrm{Gr}(d, r)$ can be represented as a quotient of the Stiefel manifold $\mathrm{St}(d, r)$, the problem can equivalently be formulated as an optimization over the Stiefel manifold, similar to the decentralized PCA problem. We refer to~\cite{absil2009optimization, hu2022riemannian} for detailed discussions on the equivalence between optimization on quotient manifolds and their total spaces. The decentralized LRMC problem is thus given by
\begin{equation}
\label{eq:decentralized_lrmc}
\min_{X_i} \ \frac{1}{2} \sum_{i=1}^{n} \left\| \mathcal{P}_{\Omega_i} \odot (X_i V_i(X) - A_i) \right\|^2, \;\;
\text{s.t.} \;\; X_1 = X_2 = \cdots = X_n, \;\l X_i \in \mathrm{St}(d, r), \;\; \forall i \in [n],
\end{equation}
where $\Omega_i$ corresponds to the indices of observed entries in the local matrix $A_i$, and
\[
V_i(X) := \arg\min_{V} \left\| \mathcal{P}_{\Omega_i}(X V - A_i) \right\|.
\]
For numerical experiments, we generate synthetic data as follows. Two random matrices $L \in \mathbb{R}^{m \times r}$ and $R \in \mathbb{R}^{r \times T}$ are drawn independently, with entries sampled from the standard Gaussian distribution. The observation pattern $\Omega$ is created by sampling a random matrix $B$ with entries uniformly distributed in $[0,1]$, and setting $(i,j) \in \Omega$ if $B_{ij} \leq \mu$, where $\mu = r(d + T - r)/(dT)$. In the experiments, we set $T = 1000$, $d = 100$, and $r = 5$. We randomly partition the data into $n = 8$ agents, each with an equal number of samples.

We adopt the Ring graph to model the communication network among agents. All algorithms are implemented with constant step sizes, and the best step size for each method is selected through a grid search over the set $\{1,2,5,8\} \times \{10^{-4}, 10^{-3}, 10^{-2}\}$.  We set the maximum number of epochs to 800 and terminate early if $\|{\rm grad} f(\bar{x}_k)\| < 10^{-8}$. The experimental results in Figure~\ref{fig:lrmc1} show that REXTRA achieves the best convergence among all algorithms in terms of the total communication cost. Thanks to its ability to support larger step sizes, REXTRA significantly reduces the communication burden by more than 50\% compared to the other methods, as shown in Figure \ref{fig:dd3}.
% Furthermore, DPRGT converges only at the smallest step size \(2 \times 10^{-3}\) diverges when the stepsize $\alpha = 5 \times 10^{-3}$ and $10^{-2}$.
For more detailed results on the results of different graphs, please refer to Appendix~\ref{app:num}.

\begin{figure}[H]
    \centering
      {\includegraphics[width=0.32\textwidth]{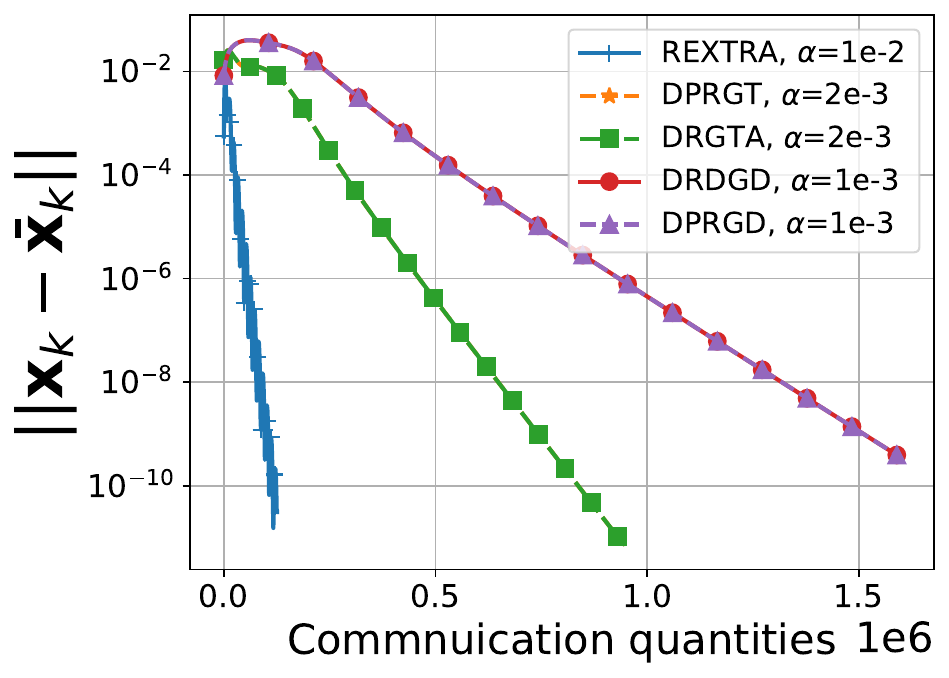}\label{fig:sub13}}
      \hfill
    {\includegraphics[width=0.32\textwidth]{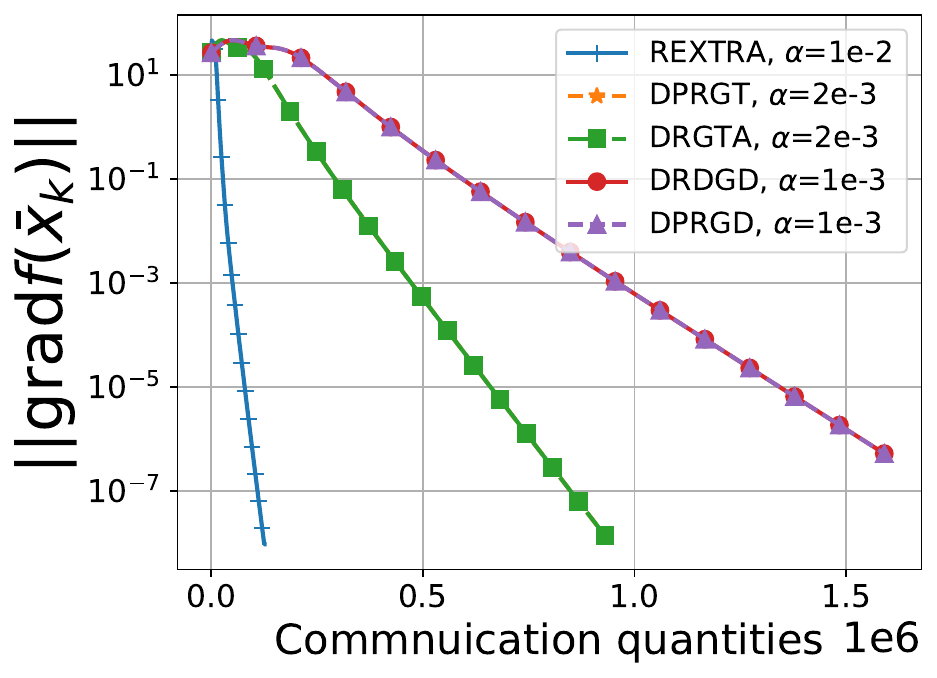}}
    \hfill
    {\includegraphics[width=0.32\textwidth]{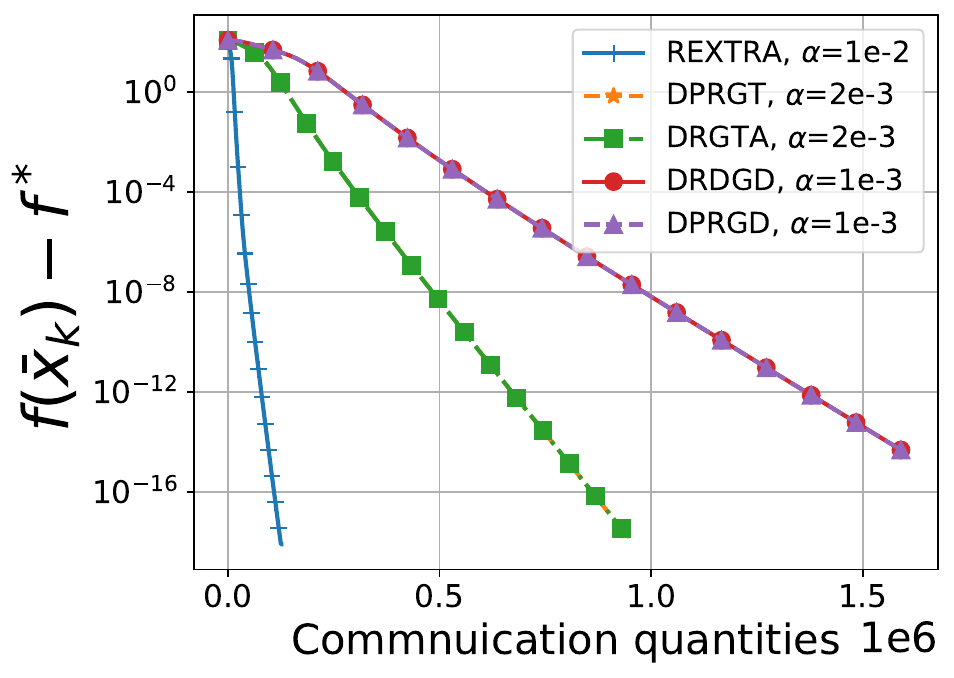}\label{fig:sub10}}
    \caption{Results of communication quantities for the LRMC problem.}
    \label{fig:lrmc1}
\end{figure}

\begin{figure}[H]
    \centering
      {\includegraphics[width=0.32\textwidth]{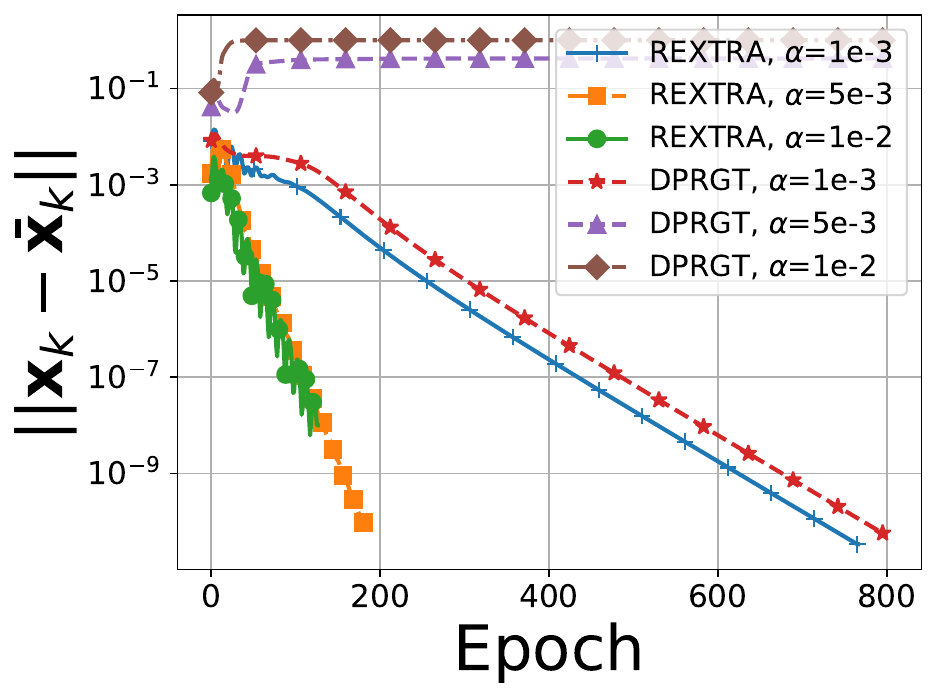}\label{fig:sub13}}
      \hfill
    {\includegraphics[width=0.32\textwidth]{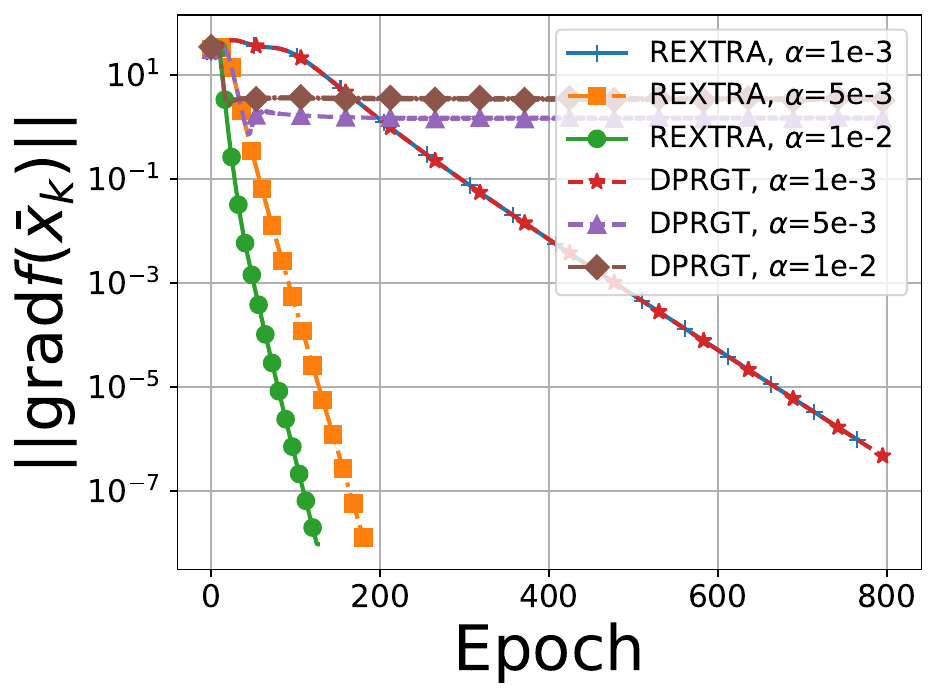}}
    \hfill
    {\includegraphics[width=0.32\textwidth]{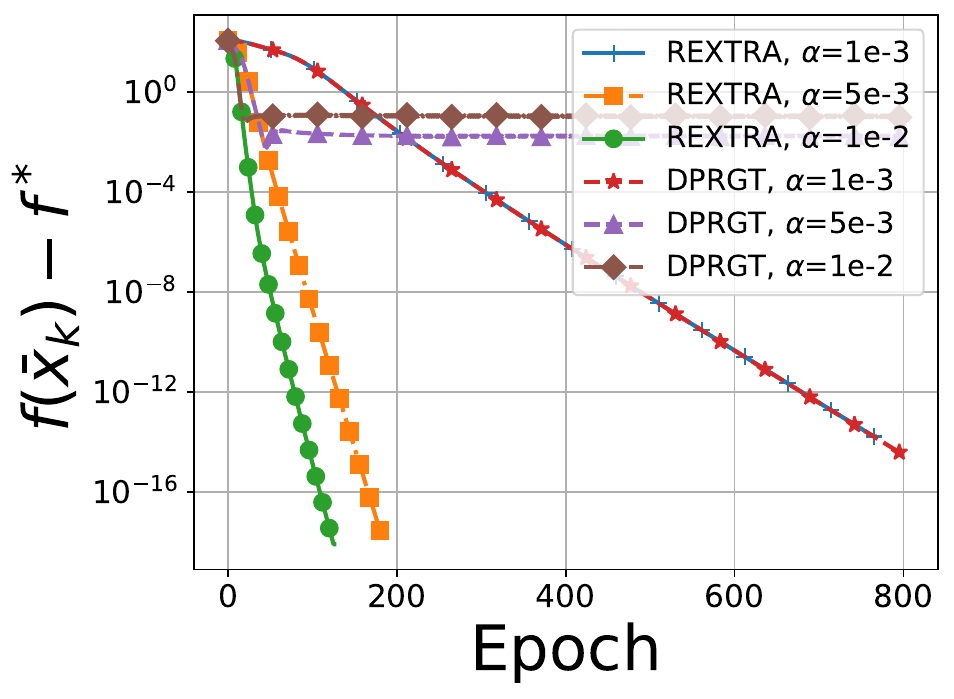}\label{fig:sub10}}
    \caption{Results of different stepsizes for the LRMC problem.}
    \label{fig:dd3}
\end{figure}

\section{Conclusion and future direction}
In this paper, we propose REXTRA, a decentralized optimization algorithm tailored for minimizing smooth, nonconvex objectives over compact Riemannian submanifolds. Similar to its Euclidean counterpart, REXTRA achieves exact convergence under data heterogeneity while reducing communication overhead by exchanging only local iterates at each iteration. By leveraging the concept of proximal smoothness, we establish an iteration complexity of \(\mathcal{O}(\epsilon^{-1})\), matching the best-known complexity guarantees for decentralized nonconvex optimization in Euclidean spaces. Empirical results on decentralized principal component analysis and low-rank matrix completion demonstrate that REXTRA achieves comparable solution accuracy with more than a 50\% reduction in total communication cost. Furthermore,  REXTRA can converge with larger
stepsizes compared with other algorithms, thereby highlighting its practical efficiency.

Our work opens several avenues for future research. First, for manifolds where the projection operator $\mathcal{P}_{\mathcal{M}}$ lacks a closed-form expression, it is promising to explore retraction-based approaches, motivated by the established connection between projection and retraction operators demonstrated in \cite{hu2023achieving}. Second, our current analytical framework is compatible with techniques such as stochastic scenario and communication compression, making it promising to design more practical and communication-efficient algorithms under different settings.
\newpage
\bibliographystyle{unsrtnat}  % 按引用顺序排列
\bibliography{ref}

%%%%%%%%%%%%%%%%%%%%%%%%%%%%%%%%%%%%%%%%%%%%%%%%%%%%%%%%%%%%

\newpage
\appendix
\section{Proof of the main results}
To facilitate the analysis of consensus behavior, we introduce the following consensus problem over $\Mcal$, following the formulation in \cite{deng2023decentralized}:
\begin{equation}
\label{prob:consensus}
\min_{\bx} \, \phi(\bx) := \frac{1}{4} \sum_{i=1}^n \sum_{j=1}^n W_{ij} \|x_i - x_j\|^2, \quad \text{s.t.} \quad x_i \in \Mcal, \quad i \in [n].
\end{equation}
The Euclidean gradient of $\phi(\bx)$ is given by
\[
\nabla \phi(\bx) := [\nabla \phi_1(\bx)^\top, \nabla \phi_2(\bx)^\top, \ldots, \nabla \phi_n(\bx)^\top ]^\top = (I_{nd} - \bW) \bx,
\]
where for each $i \in [n]$, the local gradient component is
$
\nabla \phi_i(\bx) := x_i - \sum_{j=1}^n W_{ij} x_j.
$
Note that $\nabla \phi(x) = 0$ if and only if $\bW \bx = \bx$.

The following lemma states that $\left\| \sum_{i=1}^n \mathrm{grad} \, \phi_i(\bx) \right\|$ can be bounded by the  square of consensus error.

\begin{lemma}{\cite[Lemma 5.3]{deng2023decentralized}}
% Let \( 2L_{\mathcal{P}} :=  \max_{\bx \in \mathrm{conv}(\mathcal{M})} \| D^2 \mathcal{P}_{T_{\bx} \mathcal{M}}(\cdot) \|_{\mathrm{op}} \).
For any \( \bx \in \mathcal{M}^n \), \textit{it holds that}
\begin{equation} \label{grad:bound}
\left\| \sum_{i=1}^n \mathrm{grad} \, \phi_i(\bx) \right\| \leq 2\sqrt{n} L_\Pcal \| \bx - \bar{\bx} \|^2.
\end{equation}
\end{lemma}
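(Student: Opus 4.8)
The plan is to exploit a cancellation that turns the sum of Riemannian gradients into a sum of \emph{differences} of tangent-space projections evaluated at nearby points, so that the Lipschitz constant $L_\Pcal$ of the map $x\mapsto\Pcal_{T_x\Mcal}$ can be brought to bear. First recall that, for an embedded submanifold carrying the induced metric, $\grad\phi_i(\bx)=\Pcal_{T_{x_i}\Mcal}(\nabla\phi_i(\bx))$. The crucial observation is that $\sum_{i=1}^n\nabla\phi_i(\bx)=0$: writing $\nabla\phi_i(\bx)=x_i-\sum_j W_{ij}x_j$ gives $\sum_i\nabla\phi_i(\bx)=\sum_i x_i-\sum_j\big(\sum_i W_{ij}\big)x_j$, and since $W$ is symmetric with $W\mathbf{1}_n=\mathbf{1}_n$ (so $\sum_i W_{ij}=1$), the two sums cancel. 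Fixing the reference point $\bar{x}$ (any point of $\Mcal$ works, in particular the chosen $\bar{x}\in\Pcal_{\Mcal}(\hat{x})$) and using $\sum_i\Pcal_{T_{\bar{x}}\Mcal}(\nabla\phi_i(\bx))=\Pcal_{T_{\bar{x}}\Mcal}\big(\sum_i\nabla\phi_i(\bx)\big)=0$, I would rewrite
\[
\sum_{i=1}^n\grad\phi_i(\bx)=\sum_{i=1}^n\Big(\Pcal_{T_{x_i}\Mcal}(\nabla\phi_i(\bx))-\Pcal_{T_{\bar{x}}\Mcal}(\nabla\phi_i(\bx))\Big).
\]

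By the triangle inequality and the definition of $L_\Pcal$, this already yields $\big\|\sum_{i}\grad\phi_i(\bx)\big\|\le L_\Pcal\sum_{i}\|\nabla\phi_i(\bx)\|\,\|x_i-\bar{x}\|$. I would then bound each factor $\|\nabla\phi_i(\bx)\|$ crudely: since $\sum_j W_{ij}=1$ we have $\nabla\phi_i(\bx)=(x_i-\bar{x})-\sum_j W_{ij}(x_j-\bar{x})$, hence $\|\nabla\phi_i(\bx)\|\le\|x_i-\bar{x}\|+\max_\ell\|x_\ell-\bar{x}\|\le 2\|\bx-\bar{\bx}\|$, because every block norm is dominated by the stacked norm $\|\bx-\bar{\bx}\|=\big(\sum_\ell\|x_\ell-\bar{x}\|^2\big)^{1/2}$. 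Finally Cauchy--Schwarz gives $\sum_i\|x_i-\bar{x}\|\le\sqrt{n}\,\|\bx-\bar{\bx}\|$, and multiplying the three estimates gives
\[
\Big\|\sum_{i=1}^n\grad\phi_i(\bx)\Big\|\le L_\Pcal\cdot 2\|\bx-\bar{\bx}\|\cdot\sqrt{n}\,\|\bx-\bar{\bx}\|=2\sqrt{n}\,L_\Pcal\,\|\bx-\bar{\bx}\|^2 .
\]

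The only substantive step is the cancellation $\sum_i\nabla\phi_i(\bx)=0$ together with its use to replace each $\Pcal_{T_{x_i}\Mcal}(\nabla\phi_i(\bx))$ by the difference $\Pcal_{T_{x_i}\Mcal}(\nabla\phi_i(\bx))-\Pcal_{T_{\bar{x}}\Mcal}(\nabla\phi_i(\bx))$; without it one controls only the individual normal components and loses the quadratic dependence on the consensus error. Everything afterwards is elementary arithmetic. The factor $\sqrt{n}$ is mildly wasteful and arises only from spreading the uniform bound $\|\nabla\phi_i(\bx)\|\le 2\|\bx-\bar{\bx}\|$ across the $n$ summands before applying Cauchy--Schwarz; keeping $\sum_i\|\nabla\phi_i(\bx)\|\,\|x_i-\bar{x}\|\le\|\nabla\phi(\bx)\|\,\|\bx-\bar{\bx}\|=\|(I_{nd}-\bW)(\bx-\bar{\bx})\|\,\|\bx-\bar{\bx}\|\le 2\|\bx-\bar{\bx}\|^2$ would even remove it, but the stated bound is all that the subsequent analysis requires.
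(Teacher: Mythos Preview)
Your argument is correct. The paper does not prove this lemma itself but simply quotes it from \cite[Lemma~5.3]{deng2023decentralized}; your write-up reconstructs what is essentially the natural (and presumably the original) argument: use the double-stochasticity of $W$ to get $\sum_i\nabla\phi_i(\bx)=0$, subtract the common projection $\Pcal_{T_{\bar x}\Mcal}(\nabla\phi_i(\bx))$, and then invoke the Lipschitz constant $L_\Pcal$ of $x\mapsto\Pcal_{T_x\Mcal}$. Your closing remark that Cauchy--Schwarz applied directly to $\sum_i\|\nabla\phi_i(\bx)\|\,\|x_i-\bar x\|\le\|\nabla\phi(\bx)\|\,\|\bx-\bar\bx\|\le 2\|\bx-\bar\bx\|^2$ removes the $\sqrt n$ is also correct and a genuine sharpening of the cited bound.
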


% \begin{proof}
%  Using the fact that \( \sum_{i=1}^n \nabla \phi_i(\bx) = \sum_{i=1}^n \left[ x_i - \sum_{j=1}^n W_{ij} x_j \right] = 0 \), we have
% \begin{align*}
% \left\| \sum_{i=1}^n \mathrm{grad} \, \phi_i(\bx) \right\|
% &= \left\| \sum_{i=1}^n \left[ \mathcal{P}_{T_{x_i} \mathcal{M}}(\nabla \phi_i(\bx)) - \mathcal{P}_{T_{\bar{x}} \mathcal{M}}(\nabla \phi_i(\bx)) \right] \right\| \\
% &\leq \frac{L_\Pcal}{2} \sum_{i=1}^n \| x_i - \bar{x} \| \, \| \nabla \phi_i(\bx) \| \\
% &\leq \frac{L_\Pcal}{2}  \left( \max_{i \in [n]} \| x_i - \bar{x} \| \right) \sum_{i=1}^n \left\| x_i - \sum_{j=1}^n W_{ij} x_j \right\| \\
% &\leq \frac{L_\Pcal}{2}  \left( \max_{i \in [n]} \| x_i - \bar{x} \| \right)
% \left[ \sum_{i=1}^n \| x_i - \bar{x} \| +  \left\| \sum_{j=1}^n W_{ij} x_j - \bar{x} \right\| \right] \\
% &\leq L_\Pcal \left( \max_{i \in [n]} \| x_i - \bar{x} \| \right) \sum_{i=1}^n \| x_i - \bar{x} \| \\
% &\leq \sqrt{n} L_\Pcal \| \bx - \bar{\bx} \|^2,
% \end{align*}
% where the first inequality is due to the Lipschitz continuity of \( \mathcal{P}_{T_x \mathcal{M}}(\cdot) \) over \( x \in \mathcal{M} \), the third inequality is due to the triangle inequality of \( \| \cdot \| \), the fourth inequality uses the convexity of \( \| \cdot \| \), and the last inequality comes from \( \| \bm{a} \|_1 \leq \sqrt{n} \| \bm{a} \| \) and \( \| \bm{a} \|_\infty \leq \| \bm{a} \| \) for any \( \bm{a} \in \mathbb{R}^n \).
% \end{proof}

Another useful inequality is the control of the distance between the Euclidean mean $\hat{x}$ and the manifold mean $\bar{x}$ by the square of consensus error \cite{deng2023decentralized}.
\begin{lemma} \label{lem:bound-con}
Let \( M_2 :=  \max_{\bx \in \mathrm{conv}(\mathcal{M})} \| D^2 \mathcal{P}_{T_{\bx} \mathcal{M}}(\cdot) \|_{\mathrm{op}} \). \textit{For any $\mathbf{x} \in \mathcal{M}^n$ satisfying $\|x_i - \bar{x}\| \leq \delta$, we have}
\begin{equation} \label{eqn:0-1}
\|\bar{x} - \hat{x}\| \leq M_2 \frac{\|\mathbf{x} - \bar{\bx}\|^2}{n}.
\end{equation}
\end{lemma}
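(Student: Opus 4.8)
The plan is to reduce the claimed estimate to two geometric facts: that $\bar{x}$ is a nearest point of the Euclidean average $\hat{x}$ on $\Mcal$, so that $\hat{x}-\bar{x}$ is a \emph{normal} vector at $\bar{x}$, and that because every $x_i$ lies on $\Mcal$, the normal part of the displacement $x_i-\bar{x}$ at $\bar{x}$ is second order in $\|x_i-\bar{x}\|$. First I would record, from the identity $\sum_{i=1}^n\|y-x_i\|^2 = n\|y-\hat{x}\|^2+\mathrm{const}$, that $\bar{x}\in\Pcal_{\Mcal}(\hat{x}) = \argmin_{y\in\Mcal}\|y-\hat{x}\|^2$ (this is exactly how the induced mean is defined in the text), and that $\|\hat{x}-\bar{x}\|\le\tfrac{1}{n}\sum_i\|x_i-\bar{x}\|\le\delta$. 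Since $\bar{x}$ globally minimizes the $C^1$ function $y\mapsto\|y-\hat{x}\|^2$ over the smooth manifold $\Mcal$, the Riemannian first-order condition $\Pcal_{T_{\bar{x}}\Mcal}(\bar{x}-\hat{x})=0$ holds, i.e.\ $\hat{x}-\bar{x}\in N_{\bar{x}}\Mcal$.

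Next I would split each displacement at $\bar{x}$, namely $x_i-\bar{x} = \Pcal_{T_{\bar{x}}\Mcal}(x_i-\bar{x}) + \Pcal_{N_{\bar{x}}\Mcal}(x_i-\bar{x})$, and average over $i$. By linearity of $\Pcal_{T_{\bar{x}}\Mcal}$ and the previous step, $\tfrac{1}{n}\sum_i\Pcal_{T_{\bar{x}}\Mcal}(x_i-\bar{x}) = \Pcal_{T_{\bar{x}}\Mcal}(\hat{x}-\bar{x})=0$, so the tangential parts cancel and $\hat{x}-\bar{x} = \tfrac{1}{n}\sum_{i=1}^n\Pcal_{N_{\bar{x}}\Mcal}(x_i-\bar{x})$. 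Hence, by the triangle inequality, it suffices to prove $\|\Pcal_{N_{\bar{x}}\Mcal}(x_i-\bar{x})\|\le M_2\|x_i-\bar{x}\|^2$ for each $i$; summing this over $i$ and dividing by $n$ gives the asserted $\|\hat{x}-\bar{x}\|\le\tfrac{M_2}{n}\|\bx-\bar{\bx}\|^2$.

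For this per-agent second-order bound I would use that $x_i\in\Mcal$ forces $\Pcal_{\Mcal}(x_i)=x_i$, so Lemma~\ref{lemma-project}, applied at the base point $\bar{x}$ with increment $u=x_i-\bar{x}$ (legitimate once $\delta\le R/2$), states precisely that $\|x_i-\bar{x}-\Pcal_{T_{\bar{x}}\Mcal}(x_i-\bar{x})\|\le c\,\|x_i-\bar{x}\|^2$, whose left-hand side is exactly $\|\Pcal_{N_{\bar{x}}\Mcal}(x_i-\bar{x})\|$; the constant $c$ there is governed by the second-order behaviour of $x\mapsto\Pcal_{T_x\Mcal}$ and can be taken to be $M_2$. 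Alternatively, the quadratic inequality \eqref{proximally-0} of proximal smoothness, used with $v$ ranging over unit normals spanning $N_{\bar{x}}\Mcal$, yields the same type of bound with $c=\tfrac{1}{2R}$; one can even short-circuit the whole argument by plugging $v=\hat{x}-\bar{x}$ and $y=x_i$ into \eqref{proximally-0}, summing over $i$, and recognising $\langle\hat{x}-\bar{x},\hat{x}-\bar{x}\rangle$ on the left.

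The one genuine obstacle is this second-order control of the normal component: one must be confident that the overhang of a manifold point off the tangent plane at a nearby manifold point is truly $O(\|x_i-\bar{x}\|^2)$ with the advertised curvature constant, which is exactly where the $C^2$/proximal smoothness of $\Mcal$ is indispensable — a merely Lipschitz projection would give only a first-order estimate, too weak for the downstream consensus bounds. The remaining items — checking that $\|x_i-\bar{x}\|\le\delta$ keeps us inside the range where Lemma~\ref{lemma-project} applies, and that $\bar{x}$ being a global minimizer legitimizes the first-order condition — are routine bookkeeping tied to the neighborhood hypothesis.
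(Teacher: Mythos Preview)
The paper does not prove this lemma at all: it is quoted from \cite{deng2023decentralized} and stated without proof, with only the remark that $M_2=2\sqrt{r}$ on the Stiefel manifold. Your argument is the standard one and is correct: the key observations are exactly that $\hat{x}-\bar{x}\in N_{\bar{x}}\Mcal$ by first-order optimality of $\bar{x}$, and that the normal component of $x_i-\bar{x}$ at $\bar{x}$ is $O(\|x_i-\bar{x}\|^2)$ because $x_i\in\Mcal$.

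One small point of looseness: invoking Lemma~\ref{lemma-project} as written yields the constant $Q$, and the proximal-smoothness route via \eqref{proximally-0} yields $1/(2R)$; neither is literally $M_2$. To land on $M_2$ as defined here you need a direct second-order Taylor expansion of the map $z\mapsto\Pcal_{T_z\Mcal}(\cdot)$ about $\bar{x}$ (this is how the bound is derived in the source you are citing in spirit). The distinction is immaterial for the downstream analysis, since only \emph{some} curvature-dependent constant is needed, but if you want the statement exactly as written you should make that Taylor step explicit rather than appealing to Lemma~\ref{lemma-project}.
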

Following \cite[Lemma 1]{chen2021local},
$M_2 = 2 \sqrt{r}$ for $\Mcal$ being the Stiefel manifold.
% Let $\zeta = \max_{x, y \in \mathcal{M}} \|x - y\|$ is the maximal distance between any two points on $\mathcal{M} , K := \left\{ u \in \mathbb{R}^{d \times r} : \|u\| \leq \tau \right\}$ and define
% \[
% Q_0 = \max \left( \max_{\eta \in K} \left\| D^2 \mathcal{P}_{\mathcal{M}}(x + \eta) \right\|_{\mathrm{op}}, \frac{\zeta}{\tau^2} \right).
% \]
% For $u \in K$ and $u_1 = \mathcal{P}_{T_x \mathcal{M}}(u)$, we have
% \[
% \left\| \mathcal{P}_{\mathcal{M}}(x + u) - \mathcal{P}_{\mathcal{M}}(x + u_1) \right\| \leq Q_0 \|u\|^2.
% \]
% and \( \hat{s}_k := \frac{1}{n} \sum_{i=1}^{n} s_{i,k}. \)
We then have the following lemma to bound $\| \bar{x}_{k+1} - \bar{x}_k \|$.

\begin{lemma} \label{lem:diff-manifold-mean}
 Let \( x_{i,k+1} = \mathcal{P}_{\mathcal{M}} \left( \sum_{j=1}^{n} W_{ij} x_{i,k} +  s_{i,k} \right) \) . Suppose that Assumption \ref{ass:lip} holds. It holds that
\[
\| \bar{x}_{k+1} - \bar{x}_k \| \leq \frac{8Q + 2\sqrt{n} L_\Pcal + M_2}{n} \| \bx_{k} - \bar{\bx}_k \|^2 + \frac{2Q }{n} \| \bs_k \|^2 + \frac{1}{\sqrt{n}}\| \bs_k \| + \frac{M_2}{n} \| \bx_{k+1} - \bar{\bx}_{k+1} \|^2.
\]
\end{lemma}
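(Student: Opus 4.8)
\textbf{Proof proposal for Lemma~\ref{lem:diff-manifold-mean}.}

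The plan is to decompose $\bar{x}_{k+1} - \bar{x}_k$ through the Euclidean means and the tangent-space approximation of the projection, and then bound each piece using the previously established estimates. First I would write
\[
\bar{x}_{k+1} - \bar{x}_k = (\bar{x}_{k+1} - \hat{x}_{k+1}) + (\hat{x}_{k+1} - \hat{x}_k) + (\hat{x}_k - \bar{x}_k),
\]
and immediately control the first and last terms by Lemma~\ref{lem:bound-con}, which gives $\|\bar{x}_{k+1} - \hat{x}_{k+1}\| \leq M_2 \|\bx_{k+1} - \bar{\bx}_{k+1}\|^2/n$ and $\|\hat{x}_k - \bar{x}_k\| \leq M_2\|\bx_k - \bar{\bx}_k\|^2/n$ (this is where the hypothesis $\|x_{i,k} - \bar{x}_k\| \leq \delta$ from Lemma~\ref{lem:stay-neigh} is invoked). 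That already produces the two $M_2$-terms in the claimed bound, so the work reduces to estimating $\|\hat{x}_{k+1} - \hat{x}_k\|$.

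For the middle term I would use the update rule $x_{i,k+1} = \Pcal_{\Mcal}(\sum_j W_{ij} x_{j,k} + s_{i,k})$. Averaging over $i$ and recalling that $\sum_i \sum_j W_{ij} x_{j,k} = n\hat{x}_k$ (row-stochasticity of $W$), the key is to replace $\Pcal_{\Mcal}$ by its first-order Taylor expansion around $\bar{x}_k$ (or around $x_{i,k}$). Concretely, writing $u_{i,k} := \sum_j W_{ij} x_{j,k} + s_{i,k} - x_{i,k} = -\nabla\phi_i(\bx_k) + s_{i,k}$, and applying Lemma~\ref{lemma-project} at the base point $x_{i,k}$ (whose displacement $\|u_{i,k}\|$ is $O(\delta)\le R/2$ by the neighborhood control),
\[
x_{i,k+1} = x_{i,k} + \Pcal_{T_{x_{i,k}}\Mcal}(u_{i,k}) + e_{i,k}, \qquad \|e_{i,k}\| \leq Q\|u_{i,k}\|^2 \leq 2Q(\|\nabla\phi_i(\bx_k)\|^2 + \|s_{i,k}\|^2).
\]
Summing over $i$: $n(\hat{x}_{k+1} - \hat{x}_k) = \sum_i \Pcal_{T_{x_{i,k}}\Mcal}(u_{i,k}) + \sum_i e_{i,k}$. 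Since $\Pcal_{T_{x_{i,k}}\Mcal}(u_{i,k}) = u_{i,k} - \Pcal_{N_{x_{i,k}}\Mcal}(u_{i,k})$ and $\sum_i u_{i,k} = \sum_i s_{i,k}$ (the $\nabla\phi_i$ terms telescope to zero because $I - W$ has $\1$ in its kernel), we get
\[
n(\hat{x}_{k+1} - \hat{x}_k) = \sum_i s_{i,k} - \sum_i \Pcal_{N_{x_{i,k}}\Mcal}(u_{i,k}) + \sum_i e_{i,k}.
\]
The first sum is bounded by $\sqrt{n}\|\bs_k\|$ via Cauchy–Schwarz, giving the $\frac{1}{\sqrt n}\|\bs_k\|$ term. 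The error sum contributes $2Q\sum_i \|\nabla\phi_i(\bx_k)\|^2 + 2Q\|\bs_k\|^2$; noting $\sum_i\|\nabla\phi_i(\bx_k)\|^2 = \|(I-\bW)\bx_k\|^2 = \|(I-\bW)(\bx_k - \bar{\bx}_k)\|^2 \le \|I-\bW\|^2\|\bx_k-\bar{\bx}_k\|^2 \le 4\|\bx_k - \bar{\bx}_k\|^2$ (using $\|W\|\le 1$), this produces the $8Q$ and $2Q$ coefficients. The normal-projection sum is where Lemma~\ref{grad:bound} enters: $\Pcal_{N_{x_{i,k}}\Mcal}(u_{i,k})$ relates to $\grad\phi_i$ plus normal-component-of-$s_{i,k}$ corrections, and $\|\sum_i \grad\phi_i(\bx_k)\| \le 2\sqrt n L_\Pcal \|\bx_k - \bar{\bx}_k\|^2$ supplies the $2\sqrt n L_\Pcal$ term. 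Dividing by $n$ and collecting terms yields the stated inequality.

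The main obstacle I anticipate is the bookkeeping around $\sum_i \Pcal_{N_{x_{i,k}}\Mcal}(u_{i,k})$: one must carefully separate $u_{i,k} = -\nabla\phi_i(\bx_k) + s_{i,k}$ into its Riemannian-gradient part (for which $\grad\phi_i(\bx_k) = \Pcal_{T_{x_{i,k}}\Mcal}(-\nabla\phi_i(\bx_k))$ and Lemma~\ref{grad:bound} applies to the sum) and the $s_{i,k}$ part (whose normal projection is bounded trivially by $\|s_{i,k}\|$, but summing naively would give $\sum_i\|s_{i,k}\| \le \sqrt n\|\bs_k\|$, which must be merged with the other $\frac{1}{\sqrt n}\|\bs_k\|$-contribution rather than double-counted). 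Getting the exact constants $8Q$, $2\sqrt n L_\Pcal$, $M_2$, $2Q$ to line up requires tracking which quadratic terms come from the second-order projection error versus from Lemma~\ref{lem:bound-con} versus from Lemma~\ref{grad:bound}; the qualitative structure, however, is exactly the three-term split above plus the second-order Taylor remainder.
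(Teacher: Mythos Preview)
Your proposal is correct and follows essentially the same route as the paper: the three-term split through the Euclidean means, Lemma~\ref{lem:bound-con} for the two outer terms, Lemma~\ref{lemma-project} at $x_{i,k}$ for the second-order projection remainder, the bound $\|(I-\bW)\bx_k\|\le 2\|\bx_k-\bar{\bx}_k\|$, and Lemma~\eqref{grad:bound} for $\|\sum_i\grad\phi_i(\bx_k)\|$.

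The ``obstacle'' you anticipate in the last paragraph is not real. Instead of passing through normal projections and then worrying about merging $\sum_i s_{i,k}$ with $\sum_i\Pcal_{N_{x_{i,k}}\Mcal}(s_{i,k})$, just decompose $\Pcal_{T_{x_{i,k}}\Mcal}(u_{i,k})$ directly by linearity as $-\grad\phi_i(\bx_k)+\Pcal_{T_{x_{i,k}}\Mcal}(s_{i,k})$; this is exactly what the paper does. Then the only linear-in-$\bs_k$ contribution is $\frac{1}{n}\sum_i\|\Pcal_{T_{x_{i,k}}\Mcal}(s_{i,k})\|\le\frac{1}{n}\sum_i\|s_{i,k}\|\le\frac{1}{\sqrt{n}}\|\bs_k\|$, and the constants line up without any merging step.
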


\begin{proof}
It follows from
\begin{equation} \label{eqn:1-1}
\| \nabla \phi(\bx_k) \| = \| (I_{nd} - \bW) \bx_k \| = \| (I_{nd} - \bW)(\bx_k - \bar{\bx}_k) \| \leq 2 \| \bx_k - \bar{\bx}_k \|
\end{equation}
that
\[
\begin{aligned}
\| \hat{x}_{k+1} - \hat{x}_k \|  \leq &  \| \hat{x}_{k+1} - \hat{x}_k  + \frac{1}{n} \sum_{i=1}^{n} (\mathrm{grad} \phi_i(\bx_{k}) +  \mathcal{P}_{T_{x_{k,i}} \mathcal{M} } (s_{i,k})) \| \\
& + \| \frac{1}{n} \sum_{i=1}^{n} (\mathrm{grad} \phi_i(\bx_{k}) +  \mathcal{P}_{T_{x_{k,i}} \mathcal{M} }(s_{i,k})) \|\\
\leq & \frac{Q}{n} \sum_{i=1}^{n} \| \mathrm{grad} \phi_i(\bx_{k}) +  s_{i,k} \|^2 +  \| \frac{1}{n} \sum_{i=1}^{n} \mathrm{grad} \phi_i(\bx_{k}) \| +  \frac{1}{n}\sum_{i=1}^n\| \mathcal{P}_{T_{x_{k,i}}\mathcal{M} } (s_{k,i}) \|  \\
\leq & \frac{2Q }{n} \| \mathrm{grad} \phi(\bx_{k}) \|^2 + \frac{2Q  }{n} \| \bs_k \|^2 +  \| \frac{1}{n} \sum_{i=1}^n \mathrm{grad} \phi_i(\bx_k) \| + \frac{1}{n}\sum_{i=1}^n\| s_{k,i} \| \\
\overset{\eqref{eqn:1-1}}{\leq}  & \frac{8Q+ 4 \sqrt{n} L_\Pcal }{n} \| \bx_{k} - \bar{\bx}_k \|^2 + \frac{2Q}{n} \| \bs_k \|^2 +  \frac{1}{\sqrt{n}}\| \bs_k \|,
\end{aligned}
\]
where the first inequality follows from Lemma \ref{lemma-project} by letting $x = x_{k+1,i}$ and $u = (\nabla \phi_i(\bx_{k}) +  s_{i,k})$ for $i = 1,\cdots n$ and the last inequality uses $\| a\|_1 \le \sqrt{n}\|a\|$ for vector $a$.
Therefore, it follows from Lemma \ref{lem:bound-con} that
\[
\begin{aligned}
&\| \bar{x}_{k+1} - \bar{x}_k \| \leq \| \hat{x}_{k+1} - \hat{x}_k \| + \| \hat{x}_{k+1} - \bar{x}_{k+1} \| + \| \hat{x}_k - \bar{x}_k \| \\
& \overset{\eqref{eqn:0-1}}{\leq} \frac{8Q+ 2\sqrt{n} L_\Pcal}{n} \| \bx_{k} - \bar{\bx}_k \|^2 + \frac{2Q }{n} \| \bs_k \|^2 +  \frac{1}{\sqrt{n}}\| \bs_k \| + \frac{M_2}{n} ( \| \bx_{k} - \bar{\bx}_k \|^2 + \| \bx_{k+1} - \bar{\bx}_{k+1} \|^2  )\\
&= \frac{8Q+ 2\sqrt{n} L_\Pcal + M_2}{n} \| \bx_{k} - \bar{\bx}_k \|^2 + \frac{2Q}{n} \| \bs_k \|^2 +  \frac{1}{\sqrt{n}}\| \bs_k \| + \frac{M_2}{n} \| \bx_{k+1} - \bar{\bx}_{k+1} \|^2.
\end{aligned}
\]
 The proof is completed.
\end{proof}

In the subsequent analysis, we denote:
\[
\begin{aligned}
% \mathrm{grad} f(\mathbf{x}_k) &=
% \begin{bmatrix}
% \mathrm{grad} f_1(x_{1,k})^\top, \ldots, \mathrm{grad} f_n(x_{n,k})^\top
% \end{bmatrix}^\top, \quad
\mathbf{G}_k &:= \grad f(\bx_{k}) =
\begin{bmatrix}
\mathrm{grad} f_1(x_{1,k})^\top, \cdots, \mathrm{grad} f_n(x_{n,k})^\top
\end{bmatrix}^\top, \\
\hat{g}_k &:= \frac{1}{n} \sum_{i=1}^{n} \mathrm{grad} f_i(x_{i,k}), \quad
\hat{\mathbf{G}}_k := (\mathbf{1}_n \otimes I_d)\, \hat{g}_k. \quad
\end{aligned}
\]
Then we have the following lemma on the relationship.
\begin{lemma} \label{lem:LIP-G}
 Let \( \{\bx_k\} \) be the sequence generated by Algorithm \ref{alg}. It holds that for any \( k \),

\begin{equation} \label{eqn:sufficient1}
\begin{aligned}
&\| \bG_{k+1} - \bG_k \| \leq 4L \| \bx_{k} - \bar{\bx}_k \| + 2L \| \bs_k \|, \\
% &\| \bs_{k+1} - \hat{\bG}_{k+1} \| \leq \sigma_{2} \| \bs_{k} - \hat{\bG}_k \| + \| \bG_{k+1} - \bG_k \|.
\end{aligned}
\end{equation}
\end{lemma}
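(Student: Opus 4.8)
\textbf{Proof plan for Lemma~\ref{lem:LIP-G}.}
The plan is to bound $\|\bG_{k+1}-\bG_k\|$ by first inserting the auxiliary quantities $\grad f(\bbx_k)$ and $\grad f(\bbx_{k+1})$ (where $\bbx_k = \mathbf{1}\otimes\bar x_k$, a point at which all local copies coincide), using the triangle inequality:
\[
\|\bG_{k+1}-\bG_k\| \le \|\grad f(\bx_{k+1})-\grad f(\bbx_{k+1})\| + \|\grad f(\bbx_{k+1})-\grad f(\bbx_k)\| + \|\grad f(\bbx_k)-\grad f(\bx_k)\|.
\]
The first and third terms are handled by the $L$-Lipschitz continuity of $\grad f_i$ from~\eqref{eq:quad} applied componentwise, giving $\|\grad f(\bx_{k+1})-\grad f(\bbx_{k+1})\|\le L\|\bx_{k+1}-\bbx_{k+1}\|$ and likewise $L\|\bx_k-\bbx_k\|$. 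For the middle term, again by~\eqref{eq:quad} componentwise, $\|\grad f(\bbx_{k+1})-\grad f(\bbx_k)\|\le \sqrt{n}\,L\,\|\bar x_{k+1}-\bar x_k\|$, so I still need to control $\|\bar x_{k+1}-\bar x_k\|$ and $\|\bx_{k+1}-\bbx_{k+1}\|$ in terms of the right-hand side.

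Next I would invoke Lemma~\ref{lem:diff-manifold-mean} for $\|\bar x_{k+1}-\bar x_k\|$, which already bounds it by $O(1/n)\|\bx_k-\bbx_k\|^2 + O(1/n)\|\bs_k\|^2 + \frac{1}{\sqrt n}\|\bs_k\| + O(1/n)\|\bx_{k+1}-\bbx_{k+1}\|^2$. For $\|\bx_{k+1}-\bbx_{k+1}\|$ I would use the update $\bx_{k+1}=\Pcal_{\Mcal^n}(\bW\bx_k+\bs_k)$ together with the asymptotic $1$-Lipschitz property~\eqref{eq:lip-proj} of $\Pcal_{\Mcal}$ valid on the neighborhood guaranteed by Lemma~\ref{lem:stay-neigh}; since $\bar x_{k+1}\in\Pcal_{\Mcal}(\hat x_{k+1})$ and $\bW$ has $\1_n$ as a fixed eigenvector, this gives $\|\bx_{k+1}-\bbx_{k+1}\| \lesssim \|(\bW-J)(\bx_k-\bbx_k)\| + \|\bs_k\| + (\text{higher-order terms}) \le \sigma_2\|\bx_k-\bbx_k\| + \|\bs_k\| + \dots$. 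Using the uniform bounds from Lemma~\ref{lem:bound-sk}, namely $\frac1n(\|\bx_k-\bbx_k\|^2+\|\bs_k\|^2)\le CL^2\alpha^2$, the quadratic terms $\frac1n\|\bx_k-\bbx_k\|^2$, $\frac1n\|\bs_k\|^2$ are each dominated by a constant multiple of $\|\bx_k-\bbx_k\|$ or $\|\bs_k\|$ (absorbing the small factor $CL^2\alpha^2$), so after collecting terms everything reduces to a linear combination of $\|\bx_k-\bbx_k\|$ and $\|\bs_k\|$, and then the constants can be tracked to yield the stated coefficients $4L$ and $2L$ (possibly after tightening the step-size / $\delta$ restrictions already in force).

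The main obstacle is the bookkeeping of constants: the clean coefficients $4L$ and $2L$ suggest that the quadratic remainder terms (those carrying $Q$, $M_2$, $L_\Pcal$) must be shown to be genuinely negligible, which relies on the smallness of $\delta$ and $\alpha$ from Lemma~\ref{lem:stay-neigh} and Lemma~\ref{lem:bound-sk} — so the delicate point is verifying that under those standing assumptions each higher-order term is bounded by, say, $\tfrac{1}{4}(L\|\bx_k-\bbx_k\|)$ or an analogous fraction, so that summing the pieces does not exceed $4L\|\bx_k-\bbx_k\|+2L\|\bs_k\|$. A secondary subtlety is ensuring the projection contraction estimate for $\|\bx_{k+1}-\bbx_{k+1}\|$ is applied correctly: one must confirm both $\bW\bx_k+\bs_k$ and its projection lie in the $\tau$-tube where~\eqref{eq:lip-proj} holds, which is exactly what Lemma~\ref{lem:stay-neigh} provides, and that $\bbx_{k+1}$ is indeed (a lift of) the projection of the average, so the two projected points can be compared directly.
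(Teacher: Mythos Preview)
Your plan has a fatal circularity: you invoke Lemma~\ref{lem:bound-sk} to absorb the quadratic remainder terms, but in the paper Lemma~\ref{lem:bound-sk} is proved \emph{using} Lemma~\ref{lem:LIP-G} (the bound~\eqref{eqn:sufficient1} is what controls $\|\grad f(\bx_{k+1})-\grad f(\bx_k)\|$ inside the recursion for $\|\bp_k\|$). So you cannot appeal to the uniform bound $\tfrac1n(\|\bx_k-\bbx_k\|^2+\|\bs_k\|^2)\le CL^2\alpha^2$ here.

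More importantly, the detour through $\bbx_k,\bbx_{k+1}$ and Lemma~\ref{lem:diff-manifold-mean} is entirely unnecessary. The paper's argument is a two-line calculation: first $\|\bG_{k+1}-\bG_k\|\le L\|\bx_{k+1}-\bx_k\|$ by~\eqref{eq:quad}, and then $\|\bx_{k+1}-\bx_k\|$ is bounded directly. Write $\bx_{k+1}=\Pcal_{\Mcal^n}(\bW\bx_k+\bs_k)$ and note $\bx_k\in\Mcal^n$, so by nearest-point projection $\|\bx_{k+1}-(\bW\bx_k+\bs_k)\|\le\|\bx_k-(\bW\bx_k+\bs_k)\|$; the triangle inequality then gives $\|\bx_{k+1}-\bx_k\|\le 2\|(I-\bW)\bx_k-\bs_k\|\le 4\|\bx_k-\bbx_k\|+2\|\bs_k\|$, using $(I-\bW)\bbx_k=0$ and $\|I-\bW\|_2\le 2$. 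No neighborhood restriction, no quadratic terms, no constant-tracking headaches --- the coefficients $4L$ and $2L$ fall out immediately.
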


\begin{proof}
The Lipschitz continuity of $\grad f$ yields
$ \| \bG_{k+1} - \bG_k \| \leq L \| \bx_{k+1} - \bx_{k} \|.$
On the other hand, it holds that

\[
\begin{aligned}
\| \bx_{k+1} - \bx_{k} \| &\leq \| \bx_{k+1} - \bx_{k} + (I_n - \bW) \bx_{k} - \bs_k \| + \| (I_n - \bW) \bx_{k} - \bs_k \| \\
&\leq 2 \| (I_n - \bW) \bx_{k} -  \bs_k \| \leq 4 \| \bx_{k} - \bar{\bx}_k \| + 2  \| \bs_k \|,
\end{aligned}
\]
where the first inequality is from the triangle inequality and the second inequality is due to the definition of \( \bx_{k+1} \).
% It follows from the definition of \( \hat{\bG}_{k+1} \) that
% \[
% \begin{aligned}
% \bs_{k+1} - \hat{\bG}_{k+1} &= \left( (I_n - J) \otimes I_d \right) \bs_{k+1} \\
% &= \left( (I_n - J) \otimes I_d \right) \left( (W \otimes I_d) \bs_{k} + \bG_{k+1} - \bG_k \right)\\
% &= \left( (W - J) \otimes I_d \right) \bs_{k} + \left( (I_n - J) \otimes I_d \right) (\bG_{k+1} - \bG_k).
% \end{aligned}
% \]
% Using the spectral property of $\bW$ and the proof is completed.
\end{proof}

Next, we are going to prove Lemma \ref{lem:stay-neigh}.
\begin{proof}[Proof of Lemma \ref{lem:stay-neigh}]
    Noticing that $\| \sum_{j=1}^n w_{ij}x_{j,k} - \bar{x}_k \| \leq \sum_{j=1}^n w_{ij} \|x_{j,k} - \bar{x}_k \| \leq \delta$,
    we have
    \be \label{eq:consensus-1}
    \begin{aligned}
        \|\bx_{k+1} - \bar{\bx}_{k+1}\| \leq & \| \bx_{k+1} - \bar{\bx}_{k} \| \\
    = & \| \Pcal_{\mathcal{M}^n} (\bW \bx_k + (\bs_k + \alpha \hat{\bg}_k) - \alpha \hat{\bg}_k ) - \bar{\bx}_k \|  \\
    \leq & \frac{1}{1-3\delta} \| \bW \bx_k + ( \bs_k + \alpha \hat{\bg}_k) - \alpha  \hat{\bg}_k -  \hat{\bx}_k \|,
    % \leq & \frac{1}{1-3\delta}( \|\bW \bx_k - \bar{\bx}_k + \bs_k - \alpha \hat{\bg}_k \| + \alpha \| \hat{\bg}_k\|),
    \end{aligned}
    \ee
    where the first inequality is from the optimality of $\bar{\bx}_{k+1}$, and the second inequality is due to the $1/(1-3\delta)$-Lipschitz continuity of $\Pcal_{\Mcal}$ and $\|s_{i,k} + \alpha \hat{g}_k\| \leq \delta, \alpha \| \hat{g}_k \| \leq \delta$. Besides, note that
    \be \label{eq:grad-consen-1}
    \begin{aligned}
        & \|\bs_{k+1} - \hat{\bs}_{k+1}\| = \|\bs_{k+1} + \alpha \hat{\bg}_{k+1}\|  \\
        = & \| (\bW - \bV)\bx_k + \bs_k - \alpha (\grad f(\bx_{k+1}) - \grad f(\bx_k)) + \alpha (\hat{\bg}_{k+1} - \hat{\bg}_{k}) + \alpha \hat{\bg}_{k} \| \\
        = & \| (\bW - \bV)(\bx_k -\hat{\bx}_k) + \bs_k + \alpha \hat{\bg}_k - \alpha (\grad f(\bx_{k+1}) - \grad f(\bx_k) )  +  \alpha (\hat{\bg}_{k+1} - \hat{\bg}_k) \|,
        % \leq & \| (\bW - \bV)(\bx_k - \bar{\bx}_k) + \bs_k - \alpha \hat{\bg}_k \| + \alpha \|\grad f(\bx_{k+1}) - \grad f(\bx_k)\|,
    \end{aligned}
    \ee
    where the first equality is from $\hat{\bs}_{k+1} = - \alpha \hat{\bg}_{k+1}$, and the last equality uses $(\bW - \bV) \hat{\bx}_k = 0$.   Note $J ( \grad f(\bx_{k+1}) - \grad f(\bx_k)) = \hat{\bg}_{k+1} - \hat{\bg}_k, \frac{1}{1-3\delta} < 2$ and combining \eqref{eq:consensus-1} and \eqref{eq:grad-consen-1} yields
    \be \label{eq:recur}
    \begin{aligned}
       & \left \| \begin{pmatrix}
        \bx_{k+1} - \bar{\bx}_{k+1} \\
        \bs_{k+1} + \alpha \hat{\bg}_{k+1}
    \end{pmatrix} \right \| \\
    \overset{\eqref{eq:consensus-1} \eqref{eq:grad-consen-1}}{\leq} & \left\| \begin{pmatrix}
        \frac{1}{1-3\delta} \bW -J & \frac{1}{1-3\delta} I \\
        \bW - \bV  & I - J
    \end{pmatrix} \begin{pmatrix}
        \bx_k - \hat{\bx}_k \\
        \bs_k - \alpha \hat{\bg}_k
    \end{pmatrix} \right\| +  2\alpha\left\| \begin{pmatrix}
        \hat{\bg}_k \\
        (I-J) \left( \grad f(\bx_{k+1}) - \grad f(\bx_k) \right)
    \end{pmatrix}  \right\|  \\
    \leq & \left\| \underbrace{\begin{pmatrix}
        \frac{1}{1-3\delta} \bW - J & \frac{1}{1-3\delta} I \\
        \bW - \bV  & I - J
    \end{pmatrix}}_{=: P} \right\|_2 \left\| \begin{pmatrix}
        \bx_k - \bar{\bx}_k \\
        \bs_k - \alpha \hat{\bg}_k
    \end{pmatrix} \right\| + 2\alpha \left\| \begin{pmatrix}
        \hat{\bg}_k \\
        \grad f(\bx_{k+1}) - \grad f(\bx_k)
    \end{pmatrix}  \right\|,
    \end{aligned}
    \ee
    where the second inequality uses $\|AB\|_2\leq \|A\|_2 \|B\|$ and $\|\bx_k - \hat{\bx}_k\| \leq \|\bx_k - \bar{\bx}_k\|$. Note that
    \[ P = \underbrace{\begin{pmatrix}
        \bW - J & I \\
        \bW - \bV & I - J
    \end{pmatrix}}_{Q} + \begin{pmatrix}
        \frac{3\delta}{1-3\delta} \bW & \frac{3\delta}{1-3\delta} I \\
        0 & 0
    \end{pmatrix}. \]
    It follows from \cite{qin2025convergence} that $\nu < 1$. Then, by $\delta < \frac{1}{6}$ and denote $\bar{\nu} = \nu + 12 \delta$, we have
    $ \|P\|_2 \leq \bar{\nu}.$
Hence, it follows that
    $ \|(\bx_{k+1} - \bar{\bx}_{k+1}, \bs_{k+1} + \alpha \hat{\bg}_{k+1})\| \leq (\bar{\nu}) \delta + 2\alpha \sqrt{5n} L_g \leq \delta,  $
which indicates that $(\bx_{k+1}, \bs_{k+1}) \in \mathcal{N}.$
\end{proof}

\begin{proof}[Proof of Lemma \ref{lem:bound-sk}]
Denote $\bp_{k+1} =  \begin{pmatrix}
        \bx_{k+1} - \bar{\bx}_{k+1} \\
        \bs_{k+1} + \alpha \hat{\bg}_{k+1}
    \end{pmatrix}, \bc_k =  \begin{pmatrix}
        \hat{\bg}_k \\
        \grad f(\bx_{k+1}) - \grad f(\bx_k)
    \end{pmatrix}  $. According to \eqref{eq:recur}, we have
\begin{equation} \label{eqn:pk}
\left \| \bp_{k+1} \right \| \leq  {\bar{\nu}} \left \| \bp_k \right \| + 2\alpha \|\bc_k\|.
\end{equation}
% Denote $\tilde{\bp}_{k+1} = \begin{pmatrix}
%         \bx_{k+1} - \bar{\bx}_{k+1} \\
%         \bs_{k+1}
%     \end{pmatrix} $, it follows that
%     $
%  \tilde{\bp}_{k+1} < \nu    \tilde{\bp}_{k} +3C_s \alpha \|\bs_k\|.
It follows from \cite[Lemma 2]{xu2015augmented} that there exist $ \tilde{C}_0 = \frac{5L^2}{1-(\bar{\nu} )^2} $ and $\tilde{C}_1 = \frac{4}{(1 - \bar{\nu})^2}$ and  such that for all $K > 0$, we have
$\sum_{k=0}^{K} \| \bp_k \|^2 \le \alpha^2 \tilde{C}_1  \sum_{k=0}^{K} \|\bc_k \|^2 + \tilde{C}_0$, i.e.,
\[
\sum_{k=0}^{K} \|\bx_{k+1} - \bar{\bx}_{k+1} \|^2 + \|\bs_{k+1} + \alpha \hat{\bg}_{k+1} \|^2 \le \alpha^2 \tilde{C}_1 \sum_{k=0}^{K} \left( \|\hat{\bg}_k \|^2 + \|\grad f(\bx_{k+1}) - \grad f(\bx_k) \|^2 \right) + \tilde{C}_0.
\]
According to Lemma \ref{lem:LIP-G}, we have
\begin{equation} \label{eqn:diff-grad}
\alpha^2 \sum_{k=0}^{K} \|\grad f(\bx_{k+1}) - \grad f(\bx_k) \|^2 \le \alpha^2 \left( \sum_{k=0}^{K} {32} \|\bx_k - \bar{\bx}_k \|^2 + {8}L^2 \|\bs_k\|^2 \right).
\end{equation}
Since $\|\bs_{k+1}\|^2 \le 2\|\bs_{k+1} + \alpha \hat{\bg}_{k+1} \|^2 + 2\alpha^2\|\hat{\bg}_{k+1} \|^2$, we have
\[
\begin{aligned}
&\sum_{k=0}^{K}\|\bx_{k+1} - \bar{\bx}_{k+1}\|^2 +\| \bs_{k+1}\|^2 \\
\le & 2 \left( \sum_{k=0}^{K}\|\bx_{k+1} - \bar{\bx}_{k+1}\|^2 +\| \bs_{k+1} + \alpha \hat{\bg}_{k+1}\|^2 + \alpha^2 \| \hat{\bg}_{k+1} \|^2 \right) \\
\overset{\eqref{eqn:diff-grad}}{\le} & 2\alpha^2 (\tilde{C}_1 +1) \sum_{k=1}^{K}\| \hat{\bg}_k \|^2 + \alpha^2 \tilde{C}_1 \left( \sum_{k=1}^{K} 64 L^2 \|\bx_k - \bar{\bx}_k \|^2 + 16 L^2 \|\bs_k\|^2 \right) + 2 \tilde{C}_0.
\end{aligned}
\]
Consequently, if $\alpha \le \frac{1}{8L\sqrt{\tilde{C}_1}}$, we have that
\begin{equation} \label{eqn:sum}
  \sum_{k=1}^{K}\|\bx_{k+1} - \bar{\bx}_{k+1}\|^2 +\| \bs_{k+1}\|^2 \le C_1 \alpha^2 \sum_{k=0}^{K}\| \hat{\bg}_k \|^2 + C_0,
\end{equation}
where $C_0 = \frac{2\tilde{C}_0 }{1 - 64\alpha^2L^2\tilde{C}_1}, C_1 = \frac{2(\tilde{C}_1 +1)}{1 - 64\alpha^2L^2\tilde{C}_1} $.

According to \eqref{eqn:pk} and \eqref{eqn:sufficient1} we have
\begin{equation} \label{eqn:relation-pk}
\begin{aligned}
    \|\bp_{k+1}\| &\leq \bar{\nu} \|\bp_k\| + 2 \sqrt{5n} \alpha L\leq \bar{\nu}^{k+1} \|\bp_0\|  + 2 \sqrt{5n} \alpha L \sum_{l=0}^{k} \bar{\nu}^{k-l} .
\end{aligned}
\end{equation}
% Let $a_k := \frac{\| \bp_k\|}{\sqrt{n}\alpha }$.
% For a given positive integer number $K (\leq k)$, we further
% \[
% \|\bp_{k+1}\| \leq \bar{\nu} \| \bp_k\| + 2\sqrt{5n}L \alpha  \leq \bar{\nu}^{k+1 - K} a_K + 2L \sum_{l=K}^{k} \bar{\nu}^{k-l}.
% \]
 % For $0 \leq k \leq K$,
 Then, there exists $C' > 0$ such that $\frac{1}{n}\|\bp_k \|^2 \leq C' \alpha^2 L^2$, where $C'$ is independent with $L$ and $\alpha$.
 % For $k \geq K$, one has that $a_k^2 \leq C L^2$, where $C'' = 2C' + \frac{32}{(1 - \bar{\nu})^2}$.
 Hence, we get
\[
\frac{1}{n}( \|\bar{\mathbf{x}}_k - \mathbf{x}_k\|^2 + \|\bs_k + \alpha \hat{\bg}_{k} \|^2) \leq C' L^2 \alpha^2 \quad \text{for all } k \geq 0.
\]
Then it follows that, for all $k \geq 0$,
\[
\frac{1}{n}( \|\bar{\mathbf{x}}_k - \mathbf{x}_k\|^2 + \|\bs_k \|^2) \leq \frac{1}{n}( \|\bar{\mathbf{x}}_k - \mathbf{x}_k\|^2 + 2\|\bs_k + \alpha \hat{\bg}_{k} \|^2 + 2\alpha^2 \|\hat{\bg}_{k+1}\|^2 ) \le (2C'+2)L^2\alpha^2 .
\]
% Furthermore, it follows from \eqref{eqn:pk} and the proof of \cite[Lemma 5.6]{deng2023decentralized} that there exist $C >0$ such that
% \[
% \frac{1}{n}\| \bar{\bx}_k - \bx_k \|^2 \leq CL^2 \alpha^2, ~~\forall k \geq 0.
% \]
This completes the proof by letting $C = 2C' +2$.
\end{proof}

Using the above lemma, we next use the following lemma to establish that $f(\bar{x}_{k})$ possesses the sufficient decrease property.
\begin{lemma}[Sufficient decrease] \label{lem:sufficient}
    Let $\{\bx_k \}$ be the sequence generated by Algorithm \ref{alg}. Suppose Assumptions \ref{assum:W}  and \ref{ass:lip} hold. If \( x_0 \in \mathcal{N} \),  \( \alpha < \frac{\delta}{192L} \), it follows that
\begin{equation} \label{sufficient decrease}
f(\bar{x}_{k+1}) \leq f(\bar{x}_k) - \left( \alpha - \frac{L^2}{2} \alpha^2 \right) \| \hat{g}_k \|^2 + D_1 \frac{1}{n} \| \bs_{k} \|^2 + D_2 \frac{1}{n} \| \bar{\bx}_k - \bx_k \|^2 + D_3 \frac{1}{n} \|\bx_{k+1} - \bar{\bx}_{k+1} \|^2,
\end{equation}
where \begin{align*}
D_1 &= (2Q+ 2)L + 192C Q^2 \alpha^2 L^3 + 3L, \\
D_2 &= 8LQ+ 9L + M_2^2 CL + 4 CL^3 (8Q+ 2\sqrt{n}L_{\mathcal{P}} + M_2)^2 \alpha^2, \\
D_3 &= M_2^2 CL + 4 CL^3 M_2^2 \alpha^2.
\end{align*}
\end{lemma}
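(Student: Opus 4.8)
The plan is to run a Riemannian descent‑lemma argument anchored at the manifold means $\bar{x}_k,\bar{x}_{k+1}\in\Mcal$, treating the tangential projection of $-\alpha\hat{g}_k$ at $\bar{x}_k$ as the effective update direction and absorbing everything else into quadratic consensus and auxiliary errors. Step 1 (descent inequality): summing the quadratic upper bound in \eqref{eq:quad} over $i$ with $x=\bar{x}_k$, $y=\bar{x}_{k+1}$ and dividing by $n$ yields
\[
f(\bar{x}_{k+1}) \le f(\bar{x}_k) + \iprod{\grad f(\bar{x}_k)}{\bar{x}_{k+1}-\bar{x}_k} + \frac{L}{2}\|\bar{x}_{k+1}-\bar{x}_k\|^2 .
\]
It then remains to bound the inner‑product term and the curvature term.

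Step 2 (expansion of $\bar{x}_{k+1}-\bar{x}_k$). First record $\hat{\bs}_k=-\alpha\hat{\bg}_k$, i.e. $\tfrac1n\sum_i s_{i,k}=-\alpha\hat{g}_k$, which follows from $\mathbf{1}_n^\top(W-V)=0$ exactly as in the proof of Lemma \ref{lem:stay-neigh}. Writing the per‑agent update as $x_{i,k+1}=\Pcal_\Mcal(x_{i,k}-\nabla\phi_i(\bx_k)+s_{i,k})$ and applying Lemma \ref{lemma-project} (valid since in $\mathcal{N}(\delta)$ with $\alpha$ small the displacement has norm at most $R/2$), then using \eqref{grad:bound} to control $\tfrac1n\sum_i\grad\phi_i(\bx_k)$, the $L_{\Pcal}$‑Lipschitz continuity of $x\mapsto\Pcal_{T_x\Mcal}$ to replace each $\Pcal_{T_{x_{i,k}}\Mcal}(s_{i,k})$ by $\Pcal_{T_{\bar{x}_k}\Mcal}(s_{i,k})$, the identity $\tfrac1n\sum_i s_{i,k}=-\alpha\hat{g}_k$, and Lemma \ref{lem:bound-con} for $\|\hat{x}_k-\bar{x}_k\|$ and $\|\hat{x}_{k+1}-\bar{x}_{k+1}\|$, we obtain a decomposition
\[
\bar{x}_{k+1}-\bar{x}_k = -\alpha\,\Pcal_{T_{\bar{x}_k}\Mcal}(\hat{g}_k) + \Delta_k, \qquad \|\Delta_k\| \le \frac{c}{n}\big(\|\bx_k-\bar{\bx}_k\|^2+\|\bs_k\|^2+\|\bx_{k+1}-\bar{\bx}_{k+1}\|^2\big),
\]
with $c$ depending only on $Q,L_{\Pcal},M_2$; here \eqref{eqn:1-1} bounds $\|(I_{nd}-\bW)(\bx_k-\bar{\bx}_k)\|\le 2\|\bx_k-\bar{\bx}_k\|$ and $2\|s_{i,k}\|\,\|x_{i,k}-\bar{x}_k\|\le\|s_{i,k}\|^2+\|x_{i,k}-\bar{x}_k\|^2$ disposes of the cross terms.

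Step 3 (inner‑product term, curvature term, and collecting). Since $\grad f(\bar{x}_k)\in T_{\bar{x}_k}\Mcal$ and $\Pcal_{T_{\bar{x}_k}\Mcal}$ is self‑adjoint, $\iprod{\grad f(\bar{x}_k)}{\Pcal_{T_{\bar{x}_k}\Mcal}(\hat{g}_k)}=\iprod{\grad f(\bar{x}_k)}{\hat{g}_k}=\|\hat{g}_k\|^2+\iprod{\grad f(\bar{x}_k)-\hat{g}_k}{\hat{g}_k}$. By the Lipschitz bound in \eqref{eq:quad} and Cauchy--Schwarz, $\|\grad f(\bar{x}_k)-\hat{g}_k\|\le\tfrac1n\sum_i L\|\bar{x}_k-x_{i,k}\|\le\tfrac{L}{\sqrt n}\|\bx_k-\bar{\bx}_k\|$, so Young's inequality ($\alpha\|a\|\|b\|\le\tfrac1{2L^2}\|a\|^2+\tfrac{L^2\alpha^2}{2}\|b\|^2$) gives $-\alpha\iprod{\grad f(\bar{x}_k)-\hat{g}_k}{\hat{g}_k}\le\tfrac{L^2}{2}\alpha^2\|\hat{g}_k\|^2+\tfrac{1}{2n}\|\bx_k-\bar{\bx}_k\|^2$. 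Combining with Step 2 and $\iprod{\grad f(\bar{x}_k)}{\Delta_k}\le\|\grad f(\bar{x}_k)\|\,\|\Delta_k\|\le L_g\|\Delta_k\|$ (by Assumption \ref{ass:lip}), the inner‑product term is bounded by $-(\alpha-\tfrac{L^2}{2}\alpha^2)\|\hat{g}_k\|^2$ plus a constant times $\tfrac1n(\|\bx_k-\bar{\bx}_k\|^2+\|\bs_k\|^2+\|\bx_{k+1}-\bar{\bx}_{k+1}\|^2)$. For the curvature term $\tfrac L2\|\bar{x}_{k+1}-\bar{x}_k\|^2$ we use Lemma \ref{lem:diff-manifold-mean}, which bounds $\|\bar{x}_{k+1}-\bar{x}_k\|$ by $\tfrac1{\sqrt n}\|\bs_k\|$ plus quadratic errors; squaring yields a $\tfrac Ln\|\bs_k\|^2$ term together with fourth‑order remainders that collapse back into the three quadratic buckets, at the cost of an extra $L^2\alpha^2$ factor, via the uniform estimates $\tfrac1n\|\bx_k-\bar{\bx}_k\|^2,\ \tfrac1n\|\bs_k\|^2\le CL^2\alpha^2$ from Lemma \ref{lem:bound-sk} — this is precisely where the $\alpha^2$ terms in $D_2,D_3$ originate. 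Inserting these bounds into the inequality of Step 1 and regrouping the coefficients of $\tfrac1n\|\bs_k\|^2$, $\tfrac1n\|\bar{\bx}_k-\bx_k\|^2$, $\tfrac1n\|\bx_{k+1}-\bar{\bx}_{k+1}\|^2$ gives the claimed inequality with the stated $D_1,D_2,D_3$.

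The conceptual core — replacing the non‑tangent vector $-\alpha\hat{g}_k$ by its tangential projection at $\bar{x}_k$ and using $\grad f(\bar{x}_k)\perp N_{\bar{x}_k}\Mcal$ — is short. The main obstacle is the bookkeeping in Step 2: one must expand $\bar{x}_{k+1}-\bar{x}_k$ simultaneously through the second‑order Taylor estimate of the projection (constant $Q$), the tangent‑space Lipschitz estimate (constant $L_{\Pcal}$), and the Euclidean‑versus‑Riemannian mean gap (constant $M_2$), and then verify that every cross term and every fourth‑order term is genuinely absorbable into the precise constants $D_1,D_2,D_3$, which hinges on the a priori $O(L^2\alpha^2)$ smallness of the consensus and $\bs$ errors guaranteed by Lemmas \ref{lem:stay-neigh} and \ref{lem:bound-sk}.
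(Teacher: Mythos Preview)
Your argument is sound and follows the same overall scaffold as the paper --- apply the Riemannian descent inequality \eqref{eq:quad} at $\bar{x}_k$, extract $-\alpha\|\hat{g}_k\|^2$ from the inner product, and control the curvature term via Lemma~\ref{lem:diff-manifold-mean} together with the uniform $O(L^2\alpha^2)$ bounds of Lemma~\ref{lem:bound-sk} --- but the way you isolate the $-\alpha\|\hat{g}_k\|^2$ term is genuinely different from the paper's.

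\textbf{Where the two routes diverge.} The paper does \emph{not} form a single residual $\Delta_k$ anchored at $\bar{x}_k$. Instead it splits $\bar{x}_{k+1}-\bar{x}_k$ through the Euclidean means, writing the inner product as $\langle\hat{g}_k,\hat{x}_{k+1}-\hat{x}_k\rangle$ plus $\langle\hat{g}_k,(\bar{x}_{k+1}-\hat{x}_{k+1})+(\hat{x}_k-\bar{x}_k)\rangle$, and then works \emph{per agent}: with $d_i=\nabla\phi_i(\bx_k)-s_{i,k}$ decomposed into tangential and normal parts $d_{i,1}+d_{i,2}$ at $x_{i,k}$, the key cancellation is $\langle\grad f_i(x_{i,k}),d_{i,2}\rangle=0$, which turns $\langle\hat{g}_k,d_{i,2}\rangle$ into $\langle\hat{g}_k-\grad f_i(x_{i,k}),d_{i,2}\rangle$. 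The $-\alpha\|\hat{g}_k\|^2$ term then drops out directly from $\tfrac1n\sum_i s_{i,k}=-\alpha\hat{g}_k$ and $\sum_i\nabla\phi_i(\bx_k)=0$, without any tangent-space transfer. The $\hat{x}\!\to\!\bar{x}$ correction is handled separately by Young's inequality with weight $\alpha^2 L$ followed by Lemma~\ref{lem:bound-con} and the uniform bound (this is the origin of the $M_2^2CL$ pieces in $D_2,D_3$).

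\textbf{What each buys.} Your single-anchor approach is conceptually cleaner: one projection point, one orthogonality identity $\grad f(\bar{x}_k)\perp N_{\bar{x}_k}\Mcal$, and $\langle\grad f(\bar{x}_k),\Delta_k\rangle\le L_g\|\Delta_k\|$ disposes of the remainder in one stroke. The price is the extra tangent-space transfer $\Pcal_{T_{x_{i,k}}\Mcal}(s_{i,k})\to\Pcal_{T_{\bar{x}_k}\Mcal}(s_{i,k})$, which introduces an $L_\Pcal\|\bx_k-\bar{\bx}_k\|\,\|\bs_k\|/n$ cross term not present in the paper's accounting. The paper's per-agent route avoids that transfer but has to manage the heterogeneity term $\hat{g}_k-\grad f_i(x_{i,k})$ together with $d_{i,2}$. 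As a consequence, your constants will \emph{not} be literally the $D_1,D_2,D_3$ displayed in the statement (those are artifacts of the paper's specific Young-inequality weights and its $a_1,a_2,a_3$ split); you will obtain constants of the same order, e.g.\ $L_gM_2$ in place of $M_2^2CL$ for the $\bar{x}\!-\!\hat{x}$ contribution. So your final sentence ``gives the claimed inequality with the stated $D_1,D_2,D_3$'' should be softened to ``with constants of the same form.''
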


\begin{proof}[Proof of Lemma \ref{lem:sufficient}]
 It follows from \eqref{eq:quad} that
\[
\| \hat{g}_k - \grad f(\bar{x}_k) \|^2 \leq \frac{1}{n} \sum_{k=1}^{n} \| \grad f_i(x_{k}) - \grad f_i(\bar{x}_k) \|^2 \leq \frac{L^2}{n} \| \bx_{k} - \bar{\bx}_k \|^2.
\]
Hence it follows that
\begin{equation} \label{eqn:suff1}
\begin{aligned}
f(\bar{x}_{k+1}) &\leq f(\bar{x}_k) + \langle \grad f(\bar{x}_k), \bar{x}_{k+1} - \bar{x}_k \rangle + \frac{L}{2} \| \bar{x}_{k+1} - \bar{x}_k \|^2 \\
&= f(\bar{x}_k) + \langle \hat{g}_k, \bar{x}_{k+1} - \bar{x}_k \rangle + \langle \grad f(\bar{x}_k) - \hat{g}_k, \bar{x}_{k+1} - \bar{x}_k \rangle + \frac{L}{2} \| \bar{x}_{k+1} - \bar{x}_k \|^2 \\
&\leq f(\bar{x}_k) + \langle \hat{g}_k, \bar{x}_{k+1} - \bar{x}_k \rangle + \frac{3L}{4} \| \bar{x}_{k+1} - \bar{x}_k \|^2 + \frac{1}{L} \| \grad f(\bar{x}_k) - \hat{g}_k \|^2 \\
& \leq f(\bar{x}_k) + \langle \hat{g}_k, \hat{x}_{k+1} - \hat{x}_k \rangle + \langle \hat{g}_k, \bar{x}_{k+1} - \hat{x}_{k+1} + \hat{x}_k - \bar{x}_k  \rangle   \\
& \qquad  + \frac{3L}{4} \| \bar{x}_{k+1} - \bar{x}_k \|^2  + \frac{L}{n}\|\bx_k - \bar{\bx}_k \|^2.
\end{aligned}
\end{equation}
By Young's inequality, we get
\begin{equation} \label{eqn:suff2}
\langle \hat{g}_k, \bar{x}_{k+1} - \hat{x}_k + \hat{x}_k - \bar{x}_k \rangle \leq \frac{\alpha^2 L}{2} \| \hat{g}_k \|^2 + \frac{1}{\alpha^2 L} \left( \| \bar{x}_{k+1} - \hat{x}_{k+1} \|^2 + \| \hat{x}_k - \bar{x}_k \|^2 \right).
\end{equation}
Combining \eqref{eqn:suff1} and \eqref{eqn:suff2} leads to
\[
\begin{aligned}
f(\bar{x}_{k+1}) &\leq f(\bar{x}_k) + \underbrace{\langle \hat{g}_k, \hat{x}_{k+1} - \hat{x}_k \rangle}_{a_1} + \frac{\alpha^2 L}{2} \| \hat{g}_k \|^2 + \underbrace{\frac{1}{\alpha^2 L} \left( \| \bar{x}_{k+1} - \hat{x}_{k+1} \|^2 + \| \hat{x}_k - \bar{x}_k \|^2 \right)}_{a_2} \\
&\qquad \qquad + \frac{L}{n} \| \bx_{k} - \bar{\bx}_k \|^2 + \underbrace{\frac{3L}{4} \| \bar{x}_{k+1} - \bar{x}_k \|^2}_{a_3}.
\end{aligned}
\]
We next bound  $a_1, a_2, $ and  $a_3, $  respectively. For $ a_1$, it holds that
\begin{equation} \label{eqn:lem1}
\begin{aligned}
a_1 &= \left\langle \hat{g}_k, \frac{1}{n} \sum_{k=1}^{n} (x_{i,k+1} - x_{i,k} + s_{i,k} + \nabla \phi_i(\bx_{k})) \right\rangle - \left\langle \hat{g}_k, \frac{1}{n} \sum_{k=1}^{n} (s_{i,k} + \nabla \phi_i(\bx_{k})) \right\rangle \\
&= \left\langle \hat{g}_k, \frac{1}{n} \sum_{k=1}^{n} (x_{i,k+1} - x_{i,k} + s_{i,k} + \nabla \phi_i(\bx_{k})) \right\rangle - \alpha \| \hat{g}_k \|^2.
\end{aligned}
\end{equation}

Let $d_i := \nabla \phi_i(\mathbf{x}_k) + s_{i,k}  = d_i$, where $d_{i,1} = \mathcal{P}_{T_{x_{i,k}} \mathcal{M}}(d_{i,1})$, $d_{i,2} = d_i - d_{i,1}$. Therefore,
\begin{equation} \label{eqn:lem2}
\begin{aligned}
&\left\langle \hat{g}_k, \frac{1}{n}\sum_{k=1}^{n}(x_{i,k+1}-x_{i,k}+s_{i,k}+\nabla \phi_i(\mathbf{x}_k)) \right\rangle \\
&= \frac{1}{n}\sum_{k=1}^{n}\langle \hat{g}_k, \mathcal{P}_{\mathcal{M}}(x_{i,k}-d_{i,1}-d_{i,2}) - [x_{i,k}-d_{i,1}]\rangle + \frac{1}{n}\sum_{k=1}^{n}\langle \hat{g}_k, d_{i,2}\rangle \\
& \leq  \frac{LQ}{n} \sum_{k=1}^{n} \|\mathcal{P}_{\mathcal{M}}(x_{i,k}-d_{i,1}-d_{i,2}) - [x_{i,k}-d_{i,1}] \|  + \frac{1}{n}\sum_{k=1}^{n}\langle \hat{g}_k, d_{i,2}\rangle \\
& \overset{\eqref{eq:projec-second-order1}}{\leq} \frac{LQ}{n}\sum_{k=1}^{n}\|d_{i}\|^2 + \frac{1}{n}\sum_{k=1}^{n}\langle \hat{g}_k-\mathrm{grad}f_i(x_{i,k}), d_{i,2}\rangle \\
&\leq \frac{LQ}{n}\sum_{k=1}^{n}\|d_{i}\|^2 + \frac{1}{4nL}\sum_{k=1}^{n}\|\hat{g}_k-\mathrm{grad}f_i(x_{i,k})\|^2+\frac{L}{n}\sum_{k=1}^{n}\|d_{i,2}\|^2 \\
&\leq \frac{LQ+L}{n}\|(I_{nd}-\mathbf{W})\mathbf{x}_k+\mathbf{s}_k\|^2+\frac{L}{n}\|\bar{\bx}_k - \bx_{k}\|^2 \\
&\leq \frac{8LQ+9L}{n}\|\bar{\bx}_k - \bx_{k}\|^2 + \frac{2LQ+2L}{n}\|\bs_{k}\|^2.
\end{aligned}
\end{equation}
Plugging \eqref{eqn:lem1} and \eqref{eqn:lem2} into \eqref{eqn:suff2} gives
\begin{equation} \label{eqn:thm1}
a_1 \leq \frac{8LQ+9L}{n}\|\bx_{k} - \bar{\bx}_k\|^2 + \frac{2LQ+2L}{n}\|\bs_{k}\|^2 - \alpha\|\hat{g}_k\|^2.
\end{equation}

For $a_2$, applying Lemma \ref{lem:bound-con} and Lemma \ref{lem:stay-neigh} yields
\begin{equation} \label{eqn:thm2}
\begin{aligned}
a_2 &\leq \frac{M_2^2}{\alpha^2 L n^2}\left(\| \mathbf{x}_k - \bar{\mathbf{x}}_k \|^4 + \|\mathbf{x}_{k+1}-\bar{\mathbf{x}}_{k+1}\|^4\right) \\
&\leq \frac{M_2^2 C L}{n}\left(\|\mathbf{x}_k - \bar{\mathbf{x}}_k \|^2 + \|\mathbf{x}_{k+1}-\bar{\mathbf{x}}_{k+1}\|^2\right),
\end{aligned}
\end{equation}
% $ \|\mathbf{s}_k\|^2 \leq 4\zeta^2 L^2$ in Lemma \ref{lem:stay-neigh},
% $\|\hat{s }_k\|^2 = \frac{1}{n^2}\|\sum_{i=1}^n s_{i,k}\|^2 \leq \frac{1}{n}\sum_i \|s_{i,k}\|^2 = \frac{1}{n}\|\bs_k \|^2 $and
Regarding $a_3$, noting that $\frac{1}{n}\|\bs_k \|^2 \le C L^2 \alpha^2$, it follows from Lemma \ref{lem:diff-manifold-mean} that
\begin{equation} \label{eqn:thm3}
\begin{aligned}
a_3 &\leq \frac{3L}{4}\left(\frac{4(8Q+2\sqrt{n}L_{\mathcal{P}}+M_2)^2}{n^2}\|\mathbf{x}_k - \bar{\mathbf{x}}_k\|^4+\frac{16Q^2}{n^2}\|\mathbf{s}_k\|^4\right.\\
&\quad \left.+\frac{4}{n}\|\bs_k\|^2+\frac{4M_2^2}{n^2}\|\mathbf{x}_{k+1}-\bar{\mathbf{x}}_{k+1}\|^4\right)\\
&\leq \frac{4CL^3(8Q+2\sqrt{n}L_{\mathcal{P}}+M_2)^2}{n}\alpha^2\|\mathbf{x}_k - \bar{\mathbf{x}}_k\|^2+\frac{4CL^3M_2^2}{n}\alpha^2\|\mathbf{x}_{k+1}-\bar{\mathbf{x}}_{k+1}\|^2\\
&\quad+\frac{192Q^2L^3C\alpha^2+3L}{n}\|\mathbf{s}_k\|^2.
\end{aligned}
\end{equation}
Then, combining \eqref{eqn:thm1}, \eqref{eqn:thm2}, and \eqref{eqn:thm3} gives
\begin{align*}
    f(\bar{x}_{k+1}) &\leq f(\bar{x}_k) + a_1 + \frac{\alpha^2 L^2}{2}\|\hat{g}_k\|^2 + a_2 + \frac{L}{n}\|\mathbf{x}_k-\bar{\mathbf{x}}_k\|^2 + a_3 \\
    &\leq f(\bar{x}_k) - \left(\alpha - \frac{L^2}{2}\alpha^2\right)\|\hat{g}_k\|^2
    + \frac{(2Q+2)L+192Q^2C\alpha^2L^3+3L}{n}\|\mathbf{s}_k\|^2 \\
    &\quad + \frac{8LQ+9L+M_2^2CL+4CL^3(8Q+2\sqrt{n}L_{\mathcal{P}}+M_2)^2\alpha^2}{n}\|\mathbf{x}_k-\bar{\mathbf{x}}_k\|^2\\
    &\quad + \frac{M_2^2CL+4CL^3M_2^2\alpha^2}{n}\|\mathbf{x}_{k+1}-\bar{\mathbf{x}}_{k+1}\|^2.
\end{align*}
\end{proof}

\begin{proof}[Proof of Theorem \ref{thm}]
%  Since $\bar{\nu} \in(0,1)$, applying \cite[Lemma 2]{xu2015augmented} to (5.21) yields
% \begin{equation}
% \frac{1}{n}\sum_{k=0}^{k}\|\mathbf{x}_k - \bar{\mathbf{x}}_k\|^2 \leq \tilde{C}_0 \frac{4\alpha^2}{n}\sum_{k=0}^{k}\|\mathbf{s}_k\|^2 + \tilde{C}_1,\tag{5.30}
% \end{equation}
% where $\tilde{C}_0=\frac{2}{(1-\bar{\nu})^2}$ and $\tilde{C}_1=\frac{2}{1-\bar{\nu}^2}\frac{1}{n}\|\mathbf{x}_0-\bar{\mathbf{x}}_0\|^2$.
It follows from Lemma \ref{lem:sufficient} that
\begin{equation} \label{eqn:suff-de2}
\begin{aligned}
f(\bar{x}_{k+1}) &\leq f(\bar{x}_0) - \left(\alpha-\frac{\alpha^2 L^2}{2}\right)\sum_{k=0}^{K}\|\hat{g}_k\|^2 + D_1\frac{1}{n}\sum_{k=0}^{K}\|\mathbf{s}_k\|^2+(D_2+D_3)\frac{1}{n}\sum_{k=0}^{K}\|\mathbf{x}_k-\bar{\mathbf{x}}_k\|^2 + CL^2\alpha^2 (D_2+D_3) \nonumber\\
&\leq f(\bar{x}_0)-\frac{\alpha}{2}\sum_{k=0}^{K}\|\hat{g}_k\|^2+ C_1 ( D_1 + D_2 + D_3 ) ) \frac{ \alpha^2 }{n}\sum_{k=0}^{k}\|\hat{\mathbf{g}}_k\|^2 + C_0 (D_1 + D_2 + D_3) + CL^2\alpha^2 (D_2+D_3) \nonumber\\
&\leq f(\bar{x}_0)-\frac{\alpha}{2}\sum_{k=0}^{K}\|\hat{g}_k\|^2+C_1( D_1 + D_2+ D_3 )\frac{\alpha^2}{n}\sum_{k=0}^{K}\|\hat{\mathbf{g}}_k\|^2+\tilde{C}_4 \\
& = f(\bar{x}_0)-(\frac{\alpha}{2} - C_1( D_1 + D_2+ D_3 ) \alpha^2 ) \sum_{k=0}^K \| \hat{g}_k \|^2 + \tilde{C}_4,
\end{aligned}
\end{equation}
% Next we need to bound $\|\bs_k\|^2$ using $\|\hat{g}_k \|^2.$ According to Lemma \ref{lem:bound-sk}, it follows that
% \[
% f(\bar{x}_{k+1}) \le
% \]
where $ \tilde{C}_4 =  C_0 (D_1 + D_2 + D_3) + CL^2\alpha^2 (D_2+D_3) $. It follows from \eqref{eqn:suff-de2} that if $\alpha < \min \left\{ 1, \frac{1}{4 C_1( D_1 + D_2+ D_3 )} \right\}$, we have
\[
\begin{aligned}
f(\bar{x}_{k+1}) &\leq f(\bar{x}_0) - \frac{\alpha}{2} \left[ 1 - 2 C_1( D_1 + D_2+ D_3 ) \alpha \right]  \sum_{k=0}^K \|\hat{g}_k\|^2 + \tilde{C}_4 \\
& \leq f(\bar{x}_0) - \frac{\alpha}{8} \sum_{k=0}^K \|\hat{g}_k\|^2 + \tilde{C}_4.
\end{aligned}
\]
 This implies
\[
\alpha \sum_{k=0}^K \| \hat{g}_k \|^2 \leq 8(f(\bar{x}_0) - f^* + \tilde{C}_4),
\]
where $f^* = \min_{x \in \mathcal{M}} f(x)$. Furthermore,
\[
\min_{k=0, \ldots, K} \| \hat{g}_k \|^2 \leq \frac{8(f(\bar{x}_0) - f^* + \tilde{C}_4)}{\alpha K}.
\]
Then, it follows from \eqref{eqn:summability} that
\[
\min_{k=0, \ldots, K} \frac{1}{n} \| \bx_k - \bar{\bx}_k \|^2 \leq \frac{8C_1 \alpha (f(\bar{x}_0) - f^* + \tilde{C}_4) + 8\alpha C_0}{K}.
\]
Since $\| \text{grad} f(\bar{x}_k) \|^2 \leq 2 \| \hat{g}_k \|^2 + 2 \| \nabla f(\bar{x}_k) - \hat{g}_k \|^2 \le 2 \| \hat{g}_k \|^2 + \frac{2L^2}{n} \|\bx_k - \bar{\bx}_k \|^2$, we conclude that
\[
\min_{k=0, \ldots, K} \| \text{grad} f(\bar{x}_k) \|^2 \leq \frac{\left( 16 + 16L^2C_1 \right) (f(\bar{x}_0) - f^* + \tilde{C}_4) + 16L^2C_0 \alpha}{\alpha K}.
\]
The proof is completed.
\end{proof}

\section{Additional experimsnts} \label{app:num}
% \textbf{Influence of different stepsize}

\subsection{PCA}

\subsubsection{Synthetic dataset}

We compared the numerical performance of REXTRA with DRDGD \cite{chen2021decentralized} , DPRGD\cite{deng2023decentralized},
DRGTA\cite{chen2021decentralized}, and DPRGT\cite{deng2023decentralized} on a randomly generated dataset for the decentralized PCA problem. All algorithms are implemented with constant step sizes, and the best step size for each method is selected through a grid search over the set $\{1,2,5,8\} \times \{10^{-5}, 10^{-4}, 10^{-3}, 10^{-2}\}$.  We set the maximum number of epochs to 800 and terminate early if $\|\bx_k - \bar{\bx}_k\| < 10^{-8}$.  The experimental results are presented in Figures~\ref{fig:SD1} and \ref{fig:SD2}. To model the communication topology among agents, we test several graph structures: the Erdos–Rényi (ER) network with connection probabilities $p=0.3$ and $p=0.6$, and the Ring network.
From Figure~\ref{fig:SD1}, we observe that a denser communication graph, such as the ER network with $p = 0.6$, leads to faster convergence of REXTRA.
We observe that REXTRA achieves the best convergence among all algorithms in terms of the total communication cost. The results of different algorithms with different batch sizes are listed in Figure \ref{fig:batch2}. It is shown that the final accuracy of the gradient norm, function value, and consensus error has lower error with the increase of batchsize. This demonstrates the enhanced robustness and practical utility of REXTRA in decentralized manifold optimization scenarios.

\begin{figure}[htp]
    \centering
    {\includegraphics[width=0.24\textwidth]{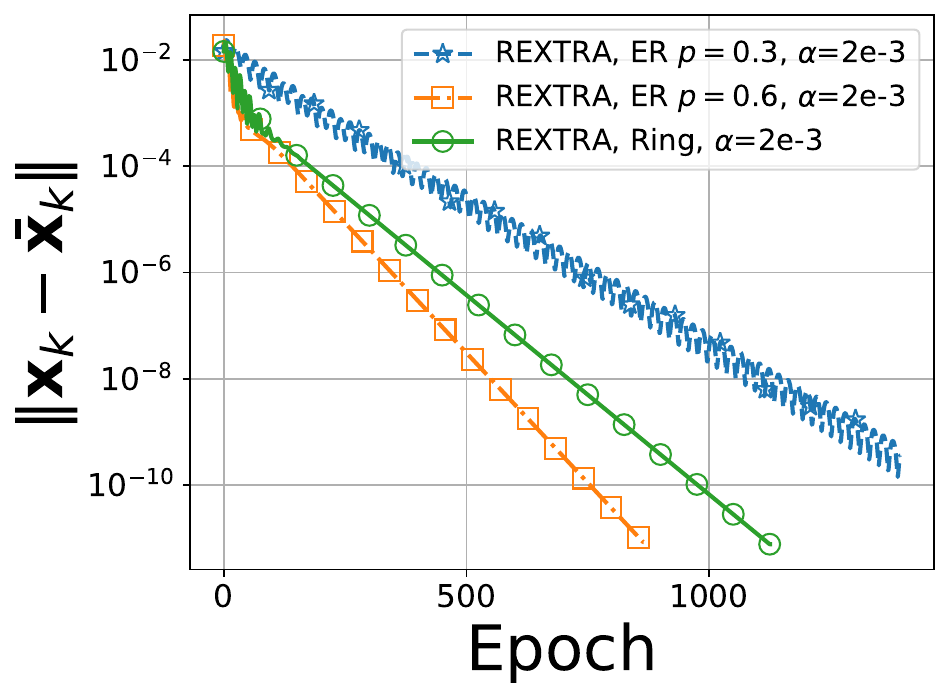}}
    \hfill
    {\includegraphics[width=0.24\textwidth]{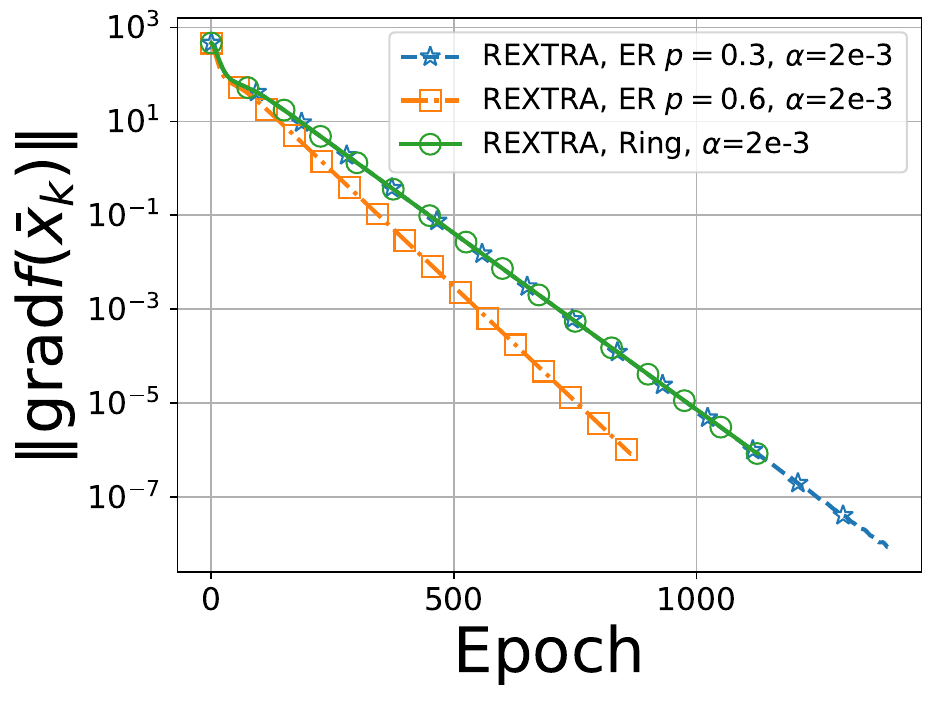}\label{fig:sub10}}
    \hfill
    {\includegraphics[width=0.24\textwidth]{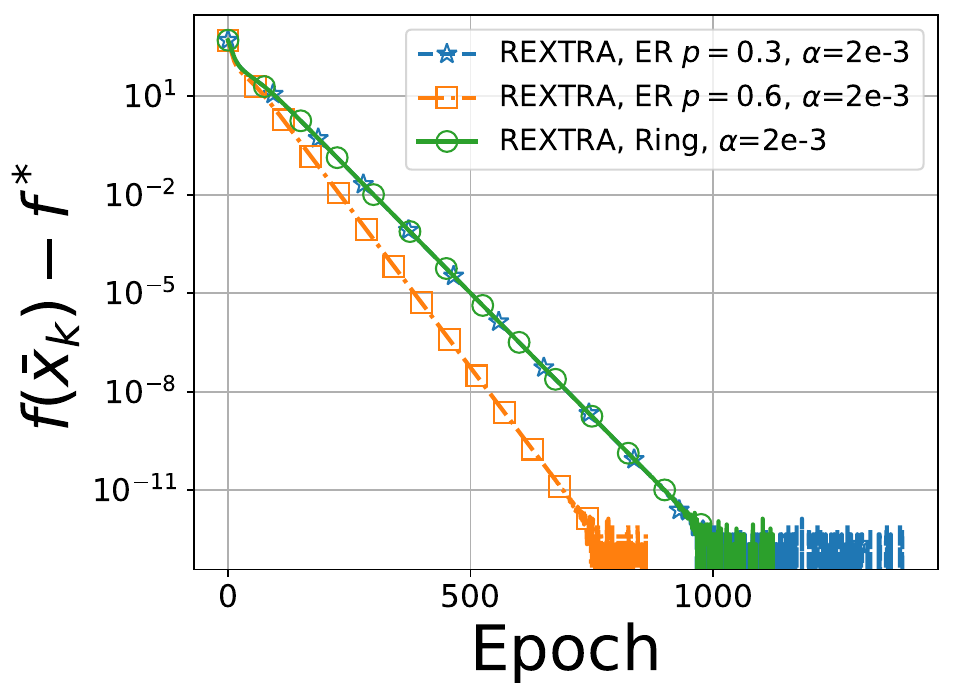}\label{fig:sub13}}
    {\includegraphics[width=0.24\textwidth]{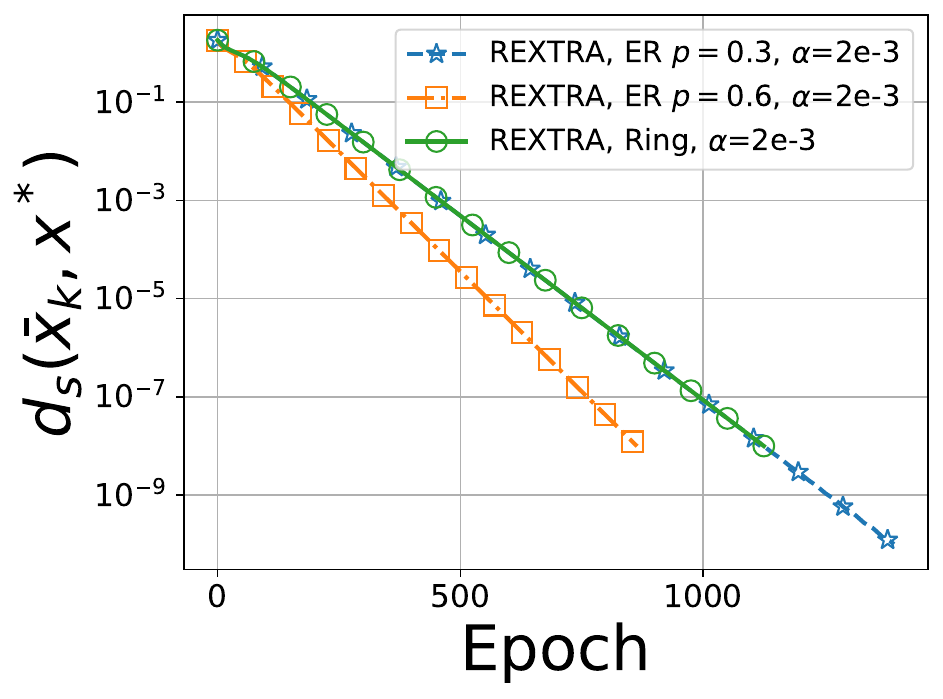}\label{fig:sub23}}
    \caption{Results of tested algorithms on different graphs.}
    \label{fig:SD1}
\end{figure}

\begin{figure}[htp]
    \centering
    {\includegraphics[width=0.24\textwidth]{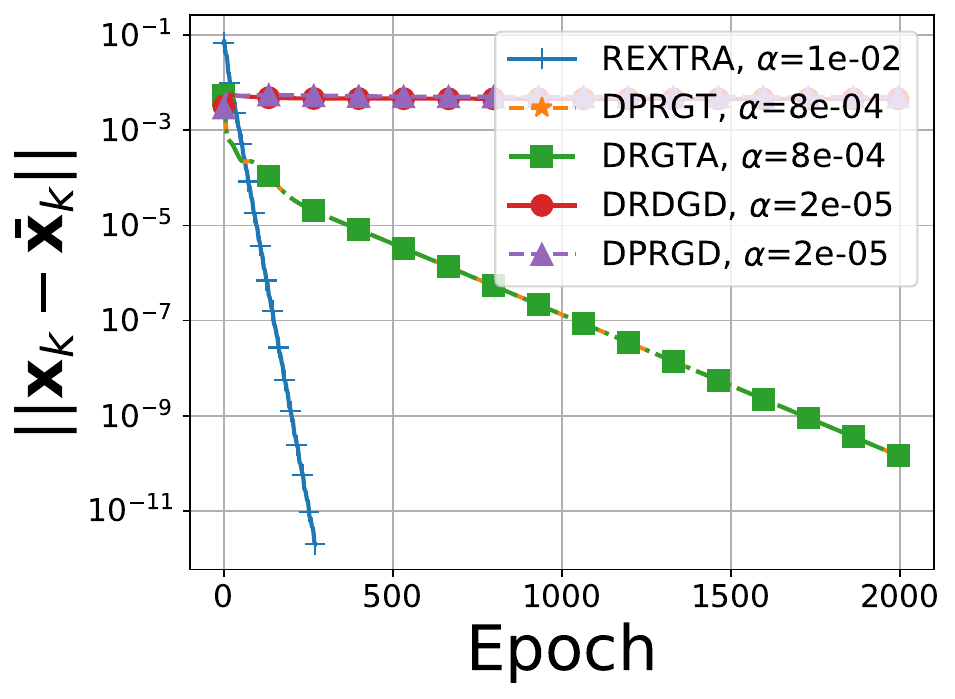}}
    \hfill
    {\includegraphics[width=0.24\textwidth]{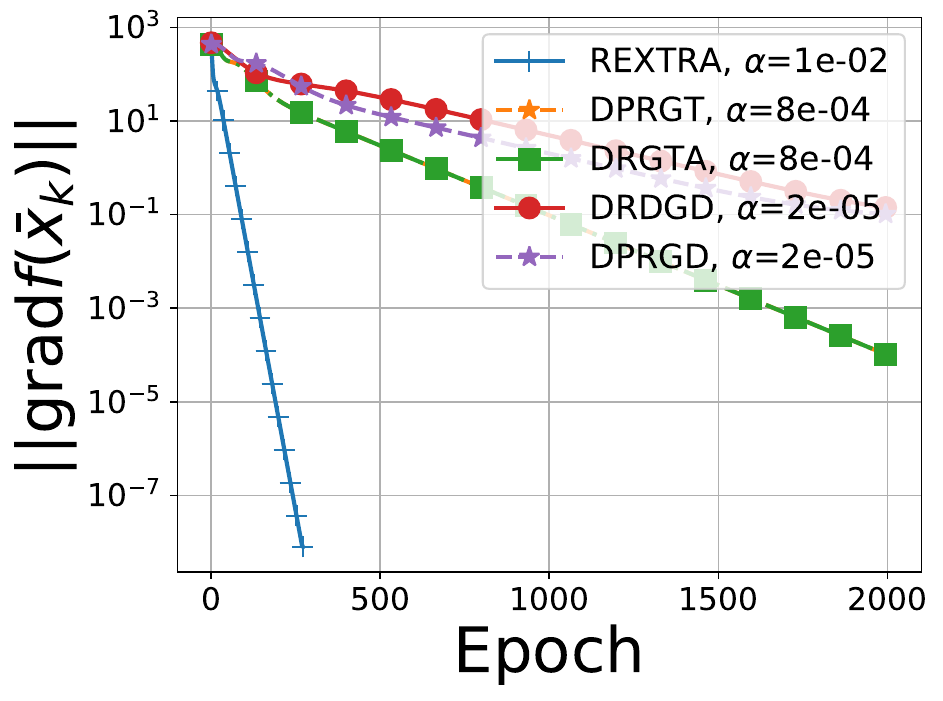}}
    \hfill
    {\includegraphics[width=0.24\textwidth]{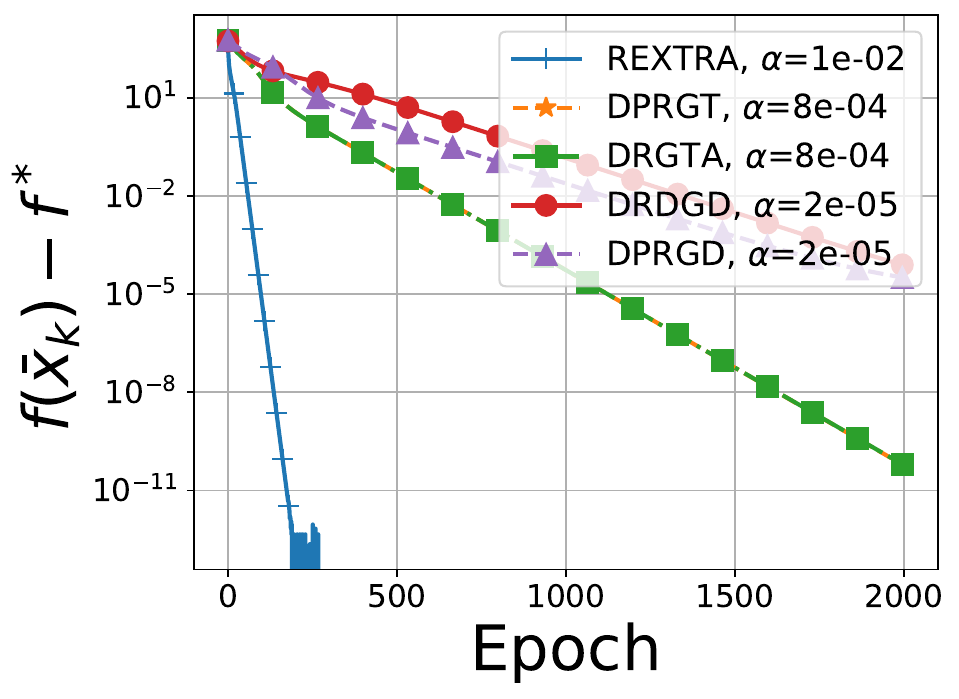}}
    {\includegraphics[width=0.24\textwidth]{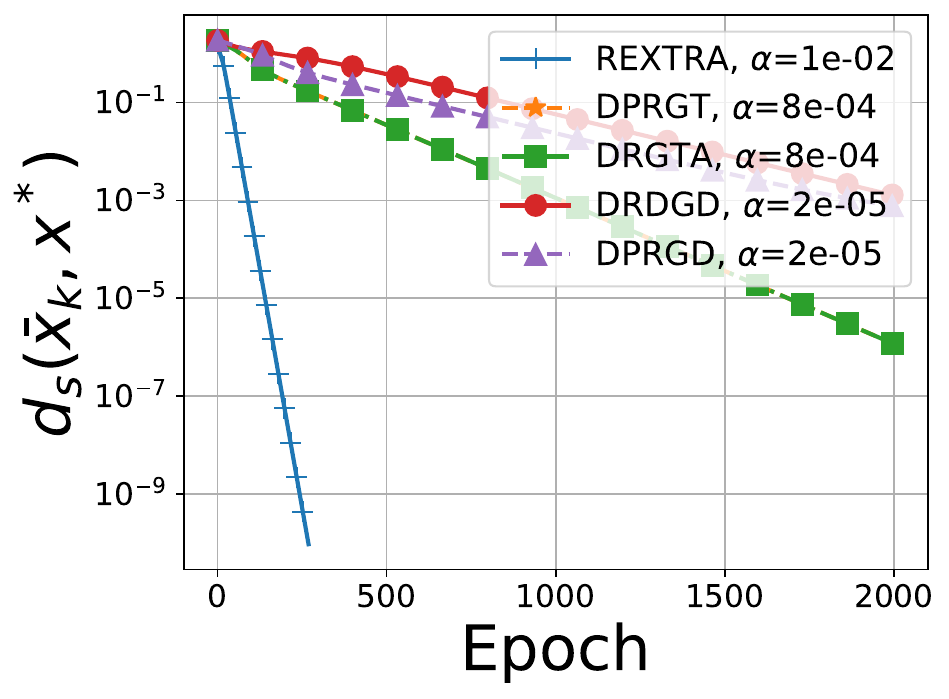} }
    \caption{Results of tested algorithms on synthetic dataset with Epoch.}
    \label{fig:SD2}
\end{figure}

\begin{figure}[htp]
    \centering
    {\includegraphics[width=0.24\textwidth]{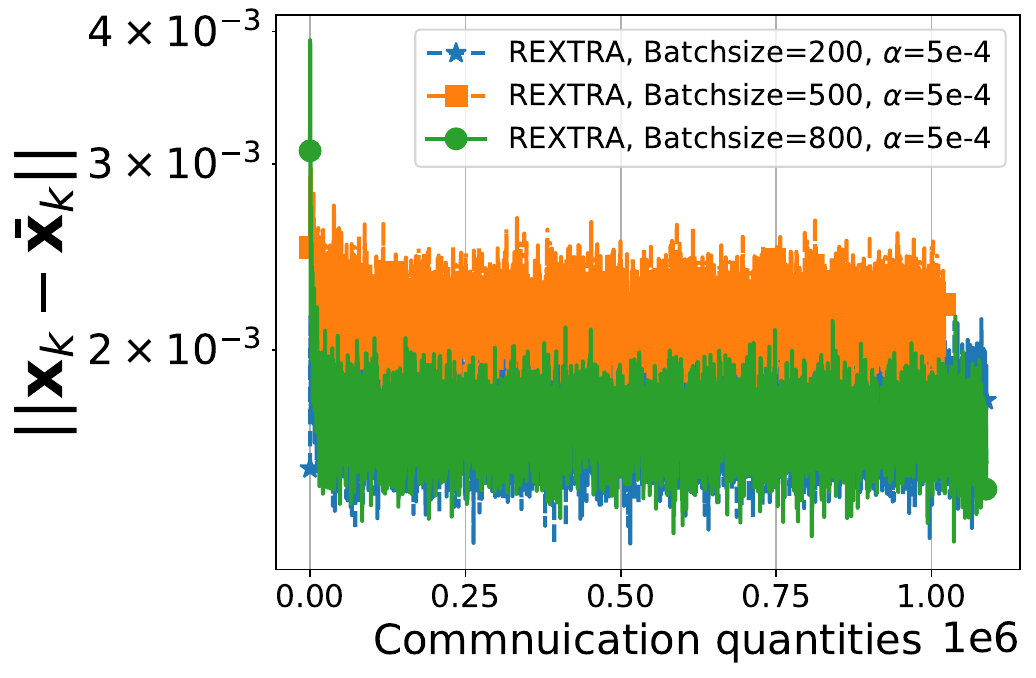}}
    \hfill
    {\includegraphics[width=0.24\textwidth]{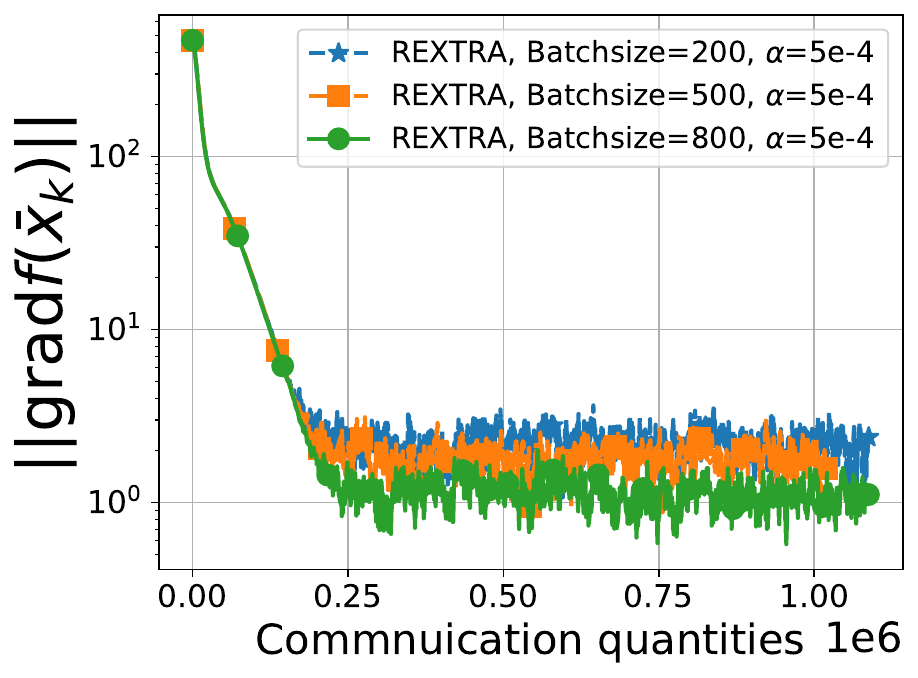}}
    \hfill
    {\includegraphics[width=0.24\textwidth]{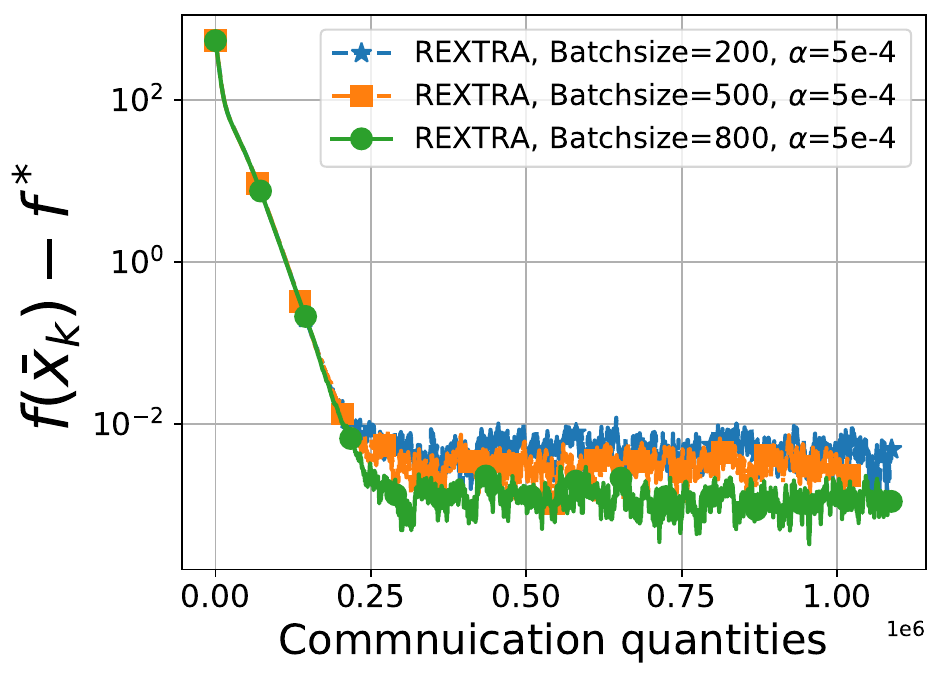}}
    {\includegraphics[width=0.24\textwidth]{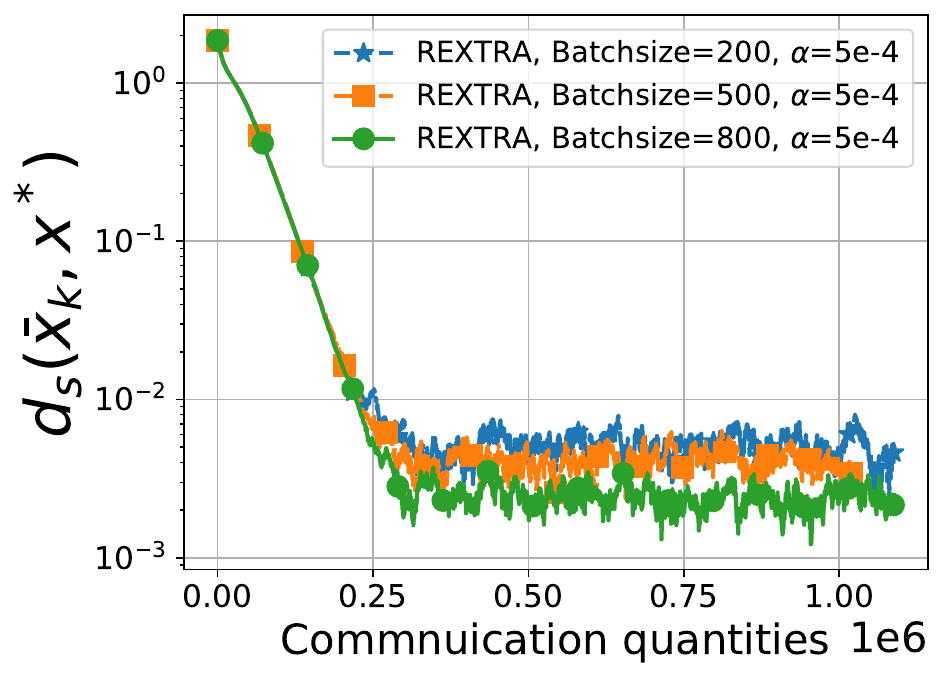}}
    \caption{Results of tested algorithms on synthetic dataset with different batchsize.}
    \label{fig:batch2}
\end{figure}

\subsubsection{Mnist dataset}

Similar to the previous setting, we employ constant step sizes for all algorithms, selecting the best step size for each method through a grid search over the set $\{1,2,4,8\} \times \{10^{-7}, 10^{-6}, 10^{-5}, 10^{-4}\}$. As shown in Figure~\ref{fig:mnist2}, REXTRA allows larger step sizes and outperforms the compared algorithms in terms of convergence, confirming the superior stability and practical effectiveness of REXTRA for decentralized optimization in realistic, high-dimensional settings. The results of REXTRA on different graphs are listed in Figure \ref{fig:mnist-graph}. It is shwon that REXTRA performs best on the ER graph with $p = 0.6$ in the sense of consensus error.
The results of different algorithms with different batch sizes are listed in Figure \ref{fig:df2}. It is shown that the final accuracy of the gradient norm, function value, and consensus error has lower error with the increase of batchsize.
\begin{figure}[htp]
    \centering
    {\includegraphics[width=0.24\textwidth]{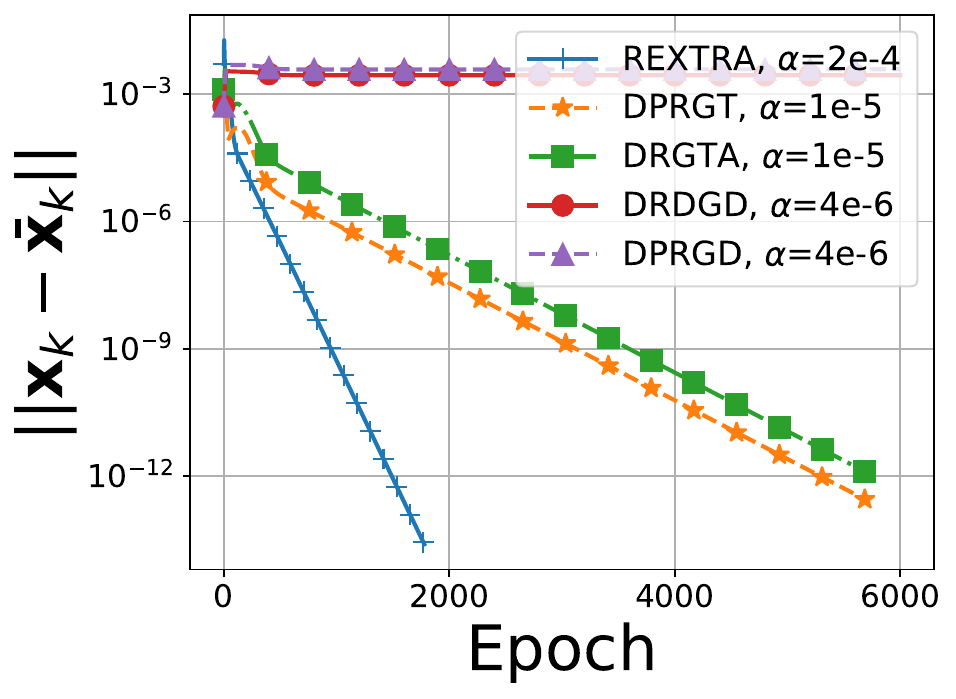}}
    \hfill
    {\includegraphics[width=0.24\textwidth]{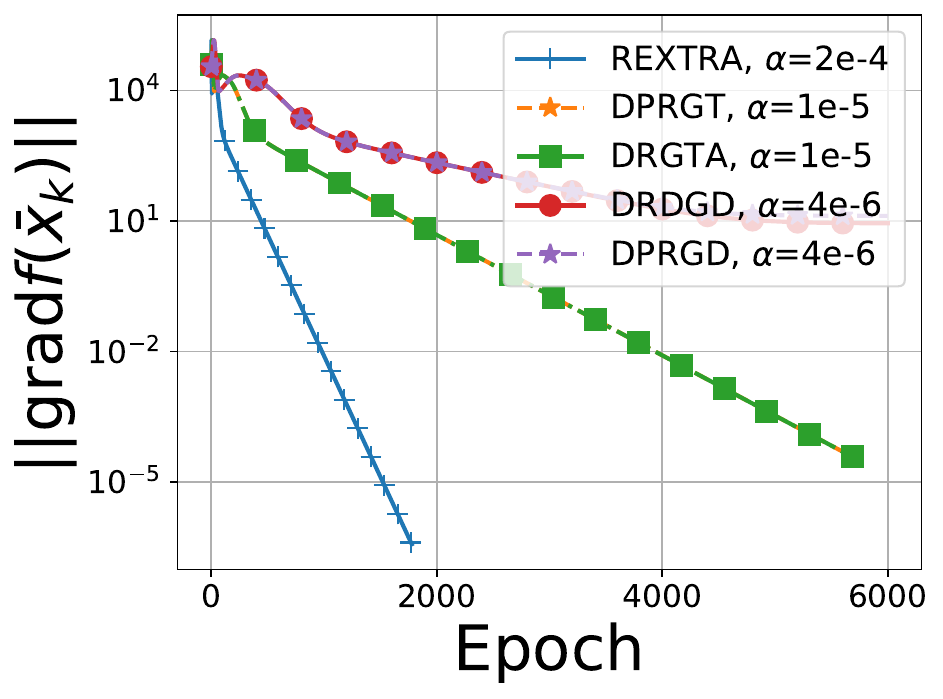}}
    \hfill
    {\includegraphics[width=0.24\textwidth]{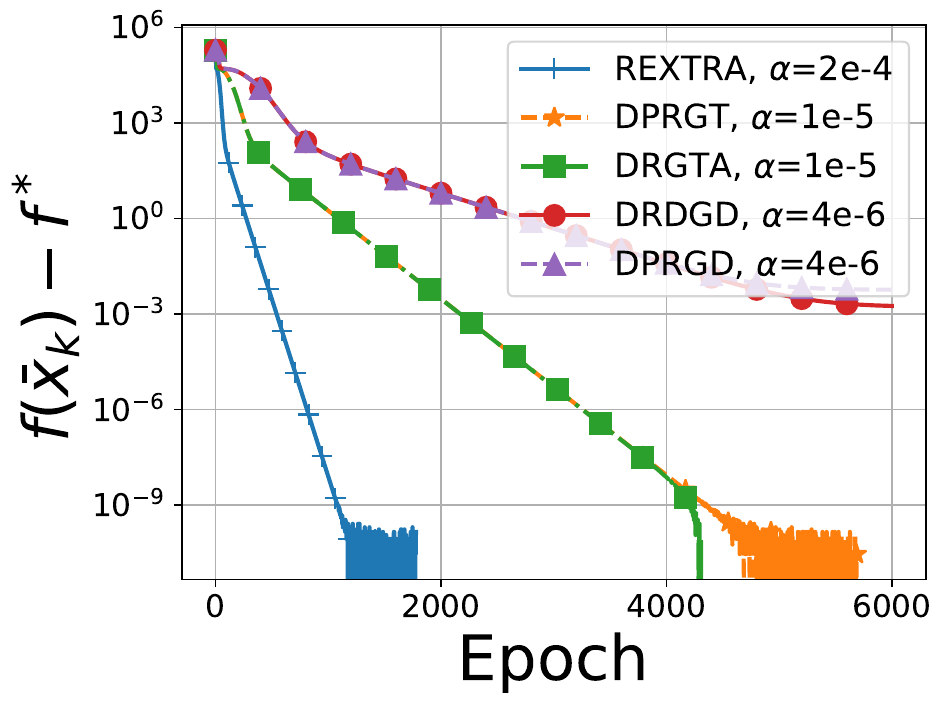}}
    {\includegraphics[width=0.24\textwidth]{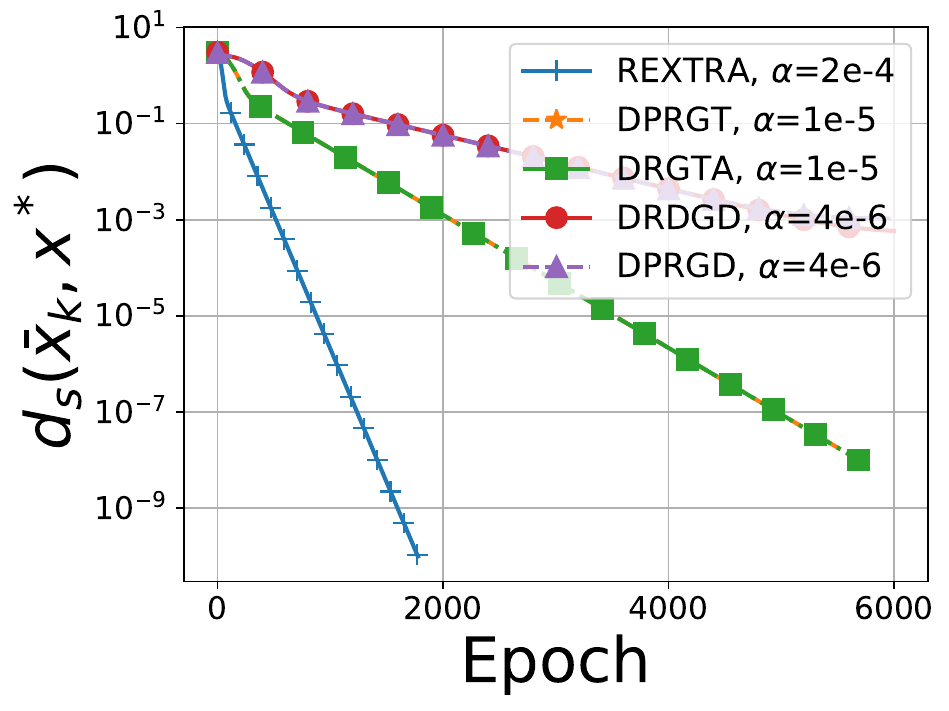}}
    \caption{Results of tested algorithms on real data with epochs.}
    \label{fig:mnist2}
\end{figure}

\begin{figure}[H]
    \centering
    {\includegraphics[width=0.24\textwidth]{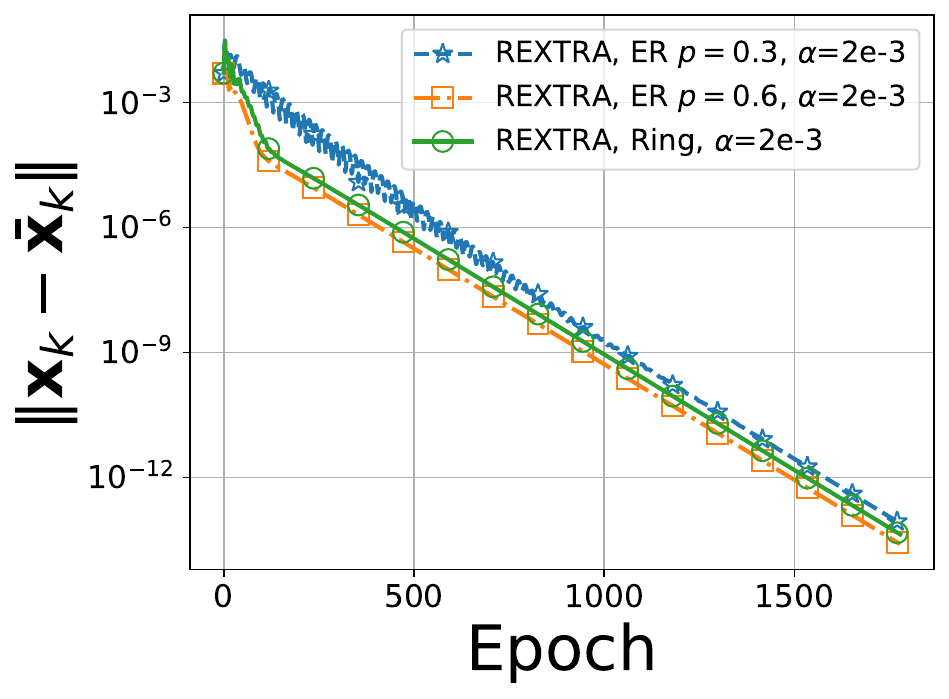}}
    \hfill
    {\includegraphics[width=0.24\textwidth]{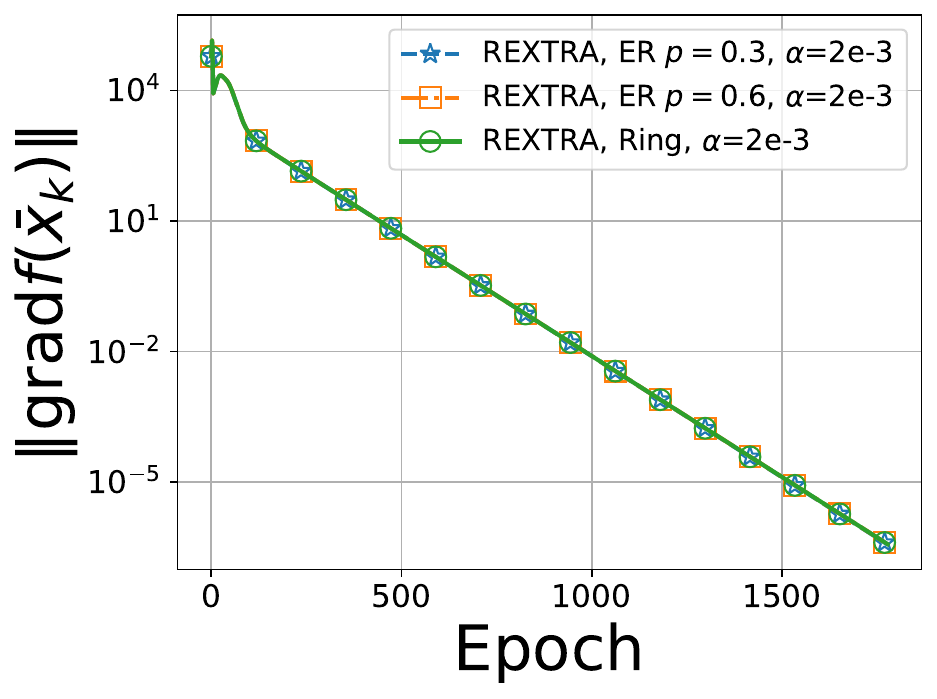}}
    \hfill
    {\includegraphics[width=0.24\textwidth]{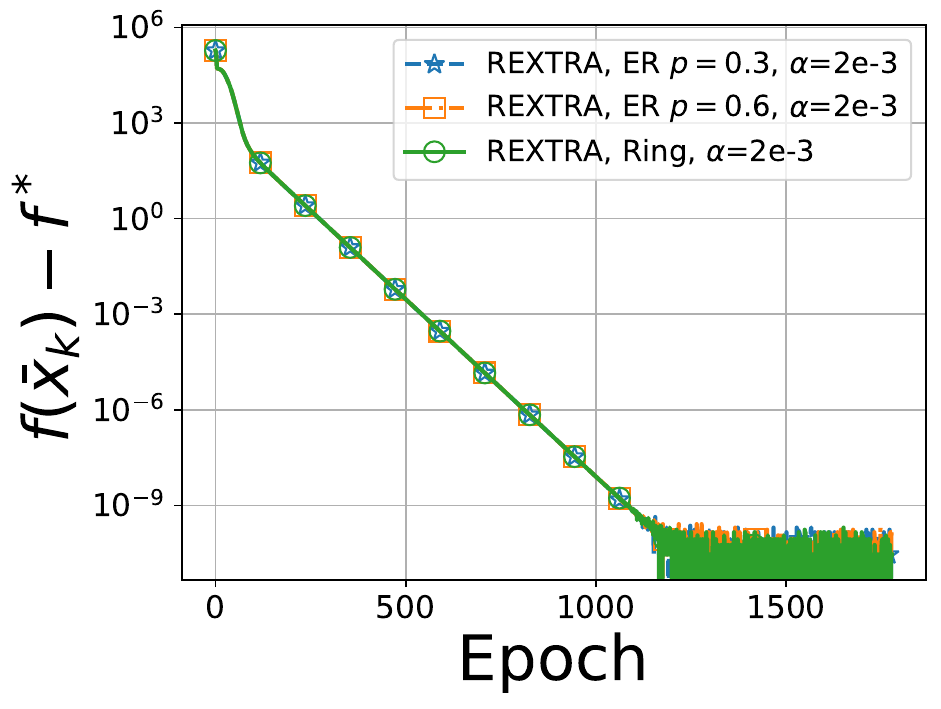}}
    {\includegraphics[width=0.24\textwidth]{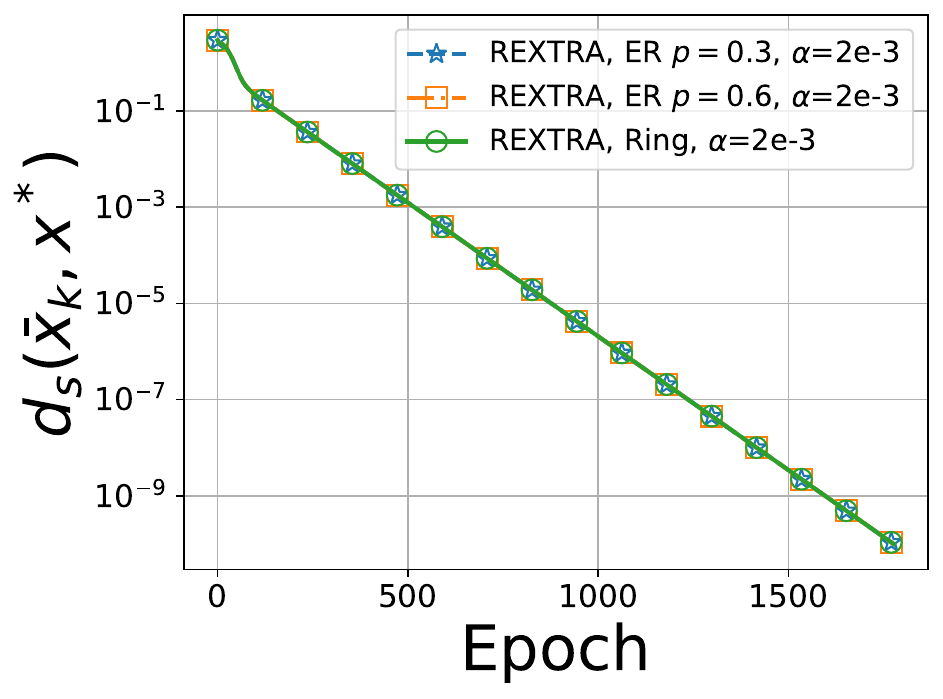}}
    \caption{Results of tested algorithms on different graphs with real data (60000 samples, 784 dimensions, 8 nodes) with epochs.}
    \label{fig:mnist-graph}
\end{figure}

\begin{figure}[H]
    \centering
    {\includegraphics[width=0.24\textwidth]{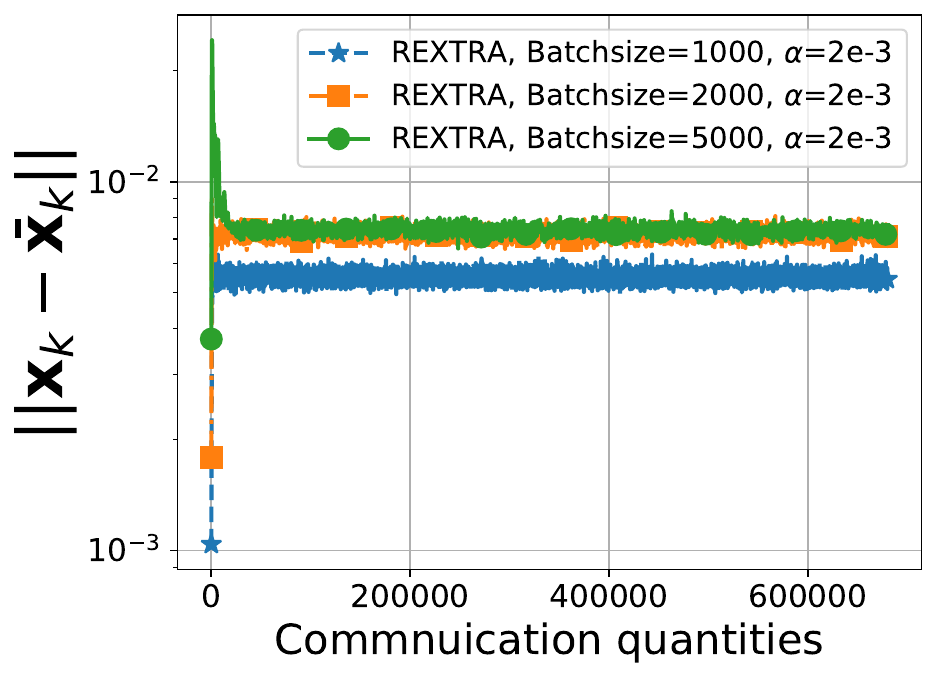}}
    \hfill
    {\includegraphics[width=0.24\textwidth]{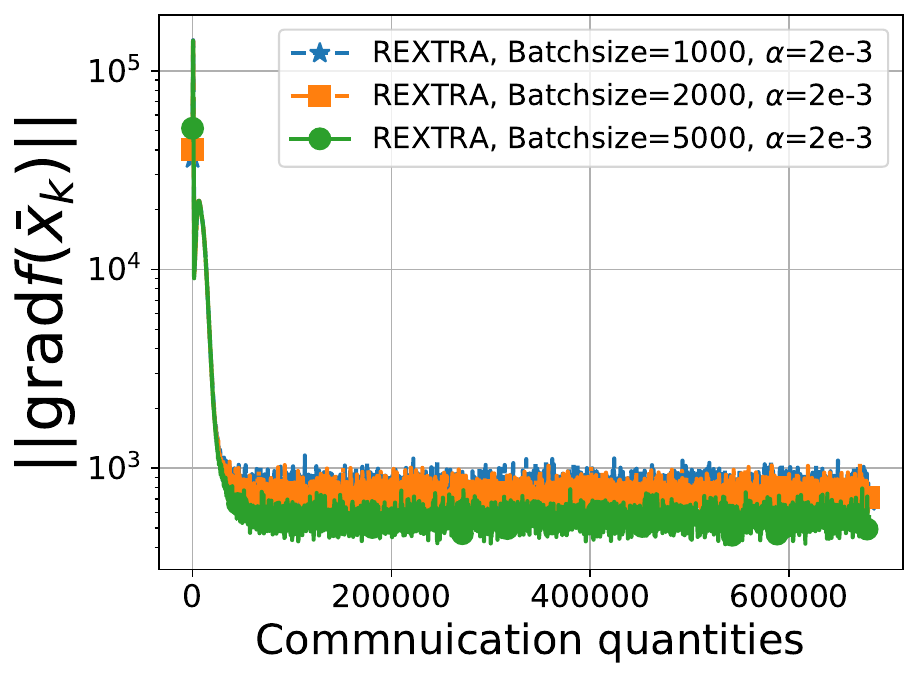}}
    \hfill
    {\includegraphics[width=0.24\textwidth]{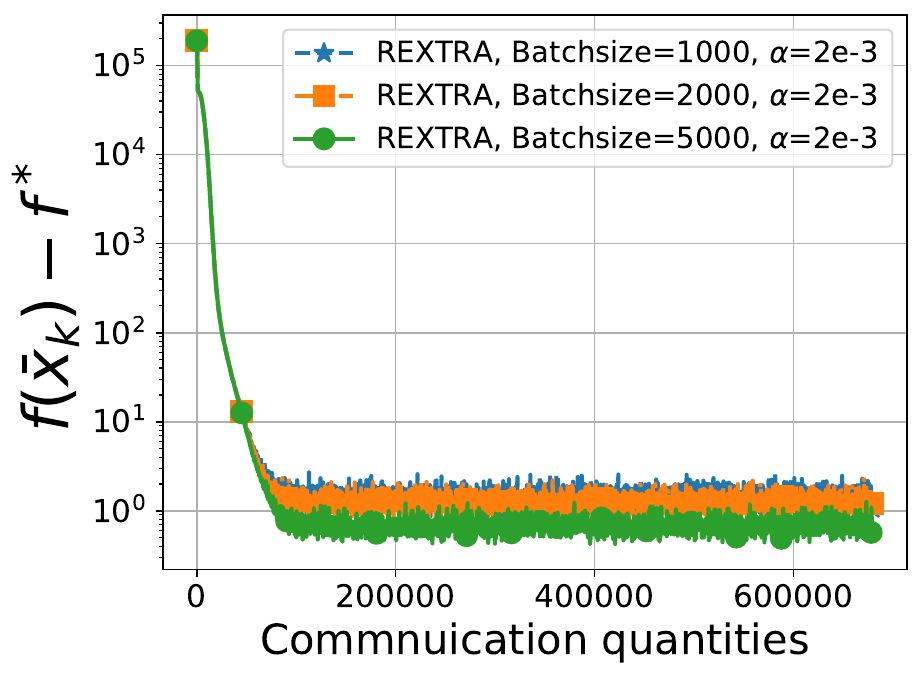}}
    {\includegraphics[width=0.24\textwidth]{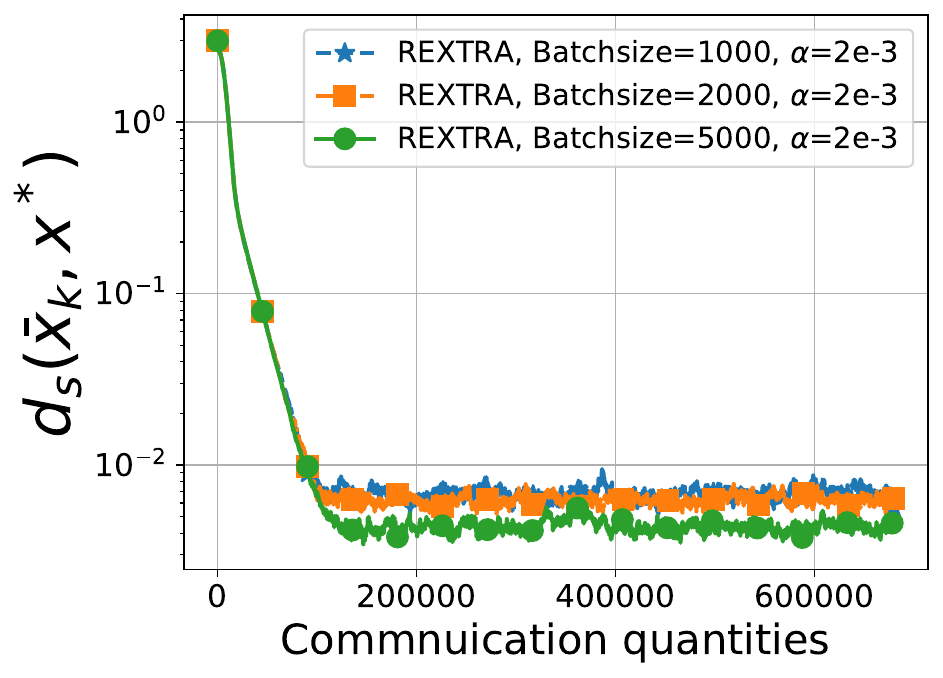} }
    \caption{Results of tested algorithms on real dataset with different batchsize.}
    \label{fig:df2}
\end{figure}

\subsection{Decentralized low-rank matrix completion}

We adopt the Ring graph to model the communication network among agents. All algorithms are implemented with constant step sizes, and the best step size for each method is selected through a grid search over the set $\{1,2,5,8\} \times \{10^{-4}, 10^{-3}, 10^{-2}\}$.  We set the maximum number of epochs to 800 and terminate early if $\|\bx_k - \bar{\bx}_k\| < 10^{-8}$. The experimental results are presented in Figure~\ref{fig:lrmc2}. We observe that REXTRA achieves the best convergence among all algorithms in terms of the epoch. The results of REXTRA on different graphs on LRMC problems are listed in Figure \ref{fig:lrmc-graph}. It is shown that REXTRA performs the best on ER graph with $p = 0.6$ in the sense of consensus error.

\begin{figure}[H]
    \centering
        {\includegraphics[width=0.32\textwidth]{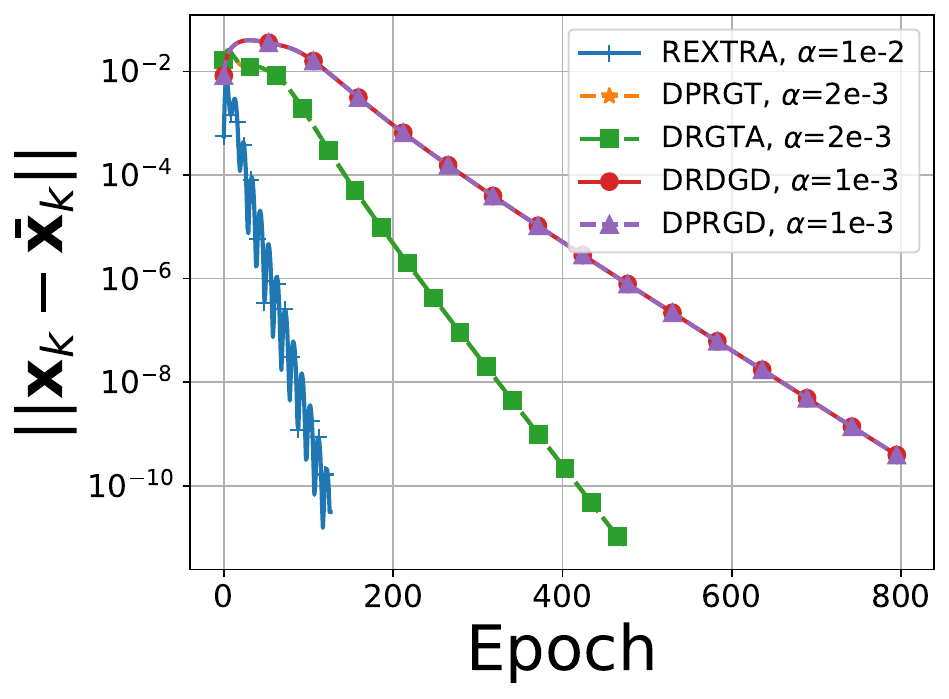}}
        \hfill
    {\includegraphics[width=0.32\textwidth]{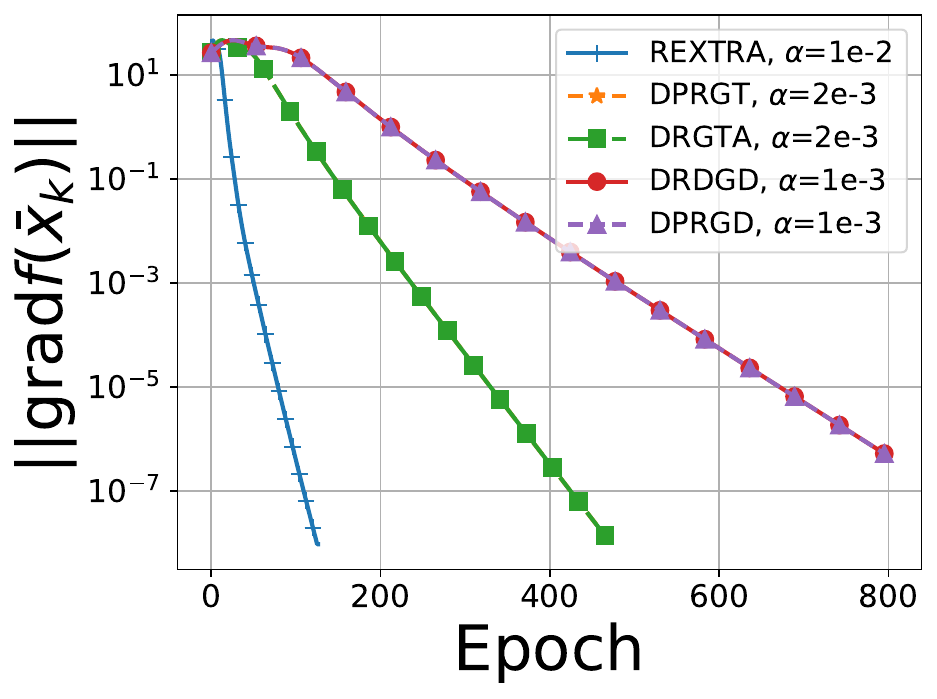}}
    \hfill
    {\includegraphics[width=0.32\textwidth]{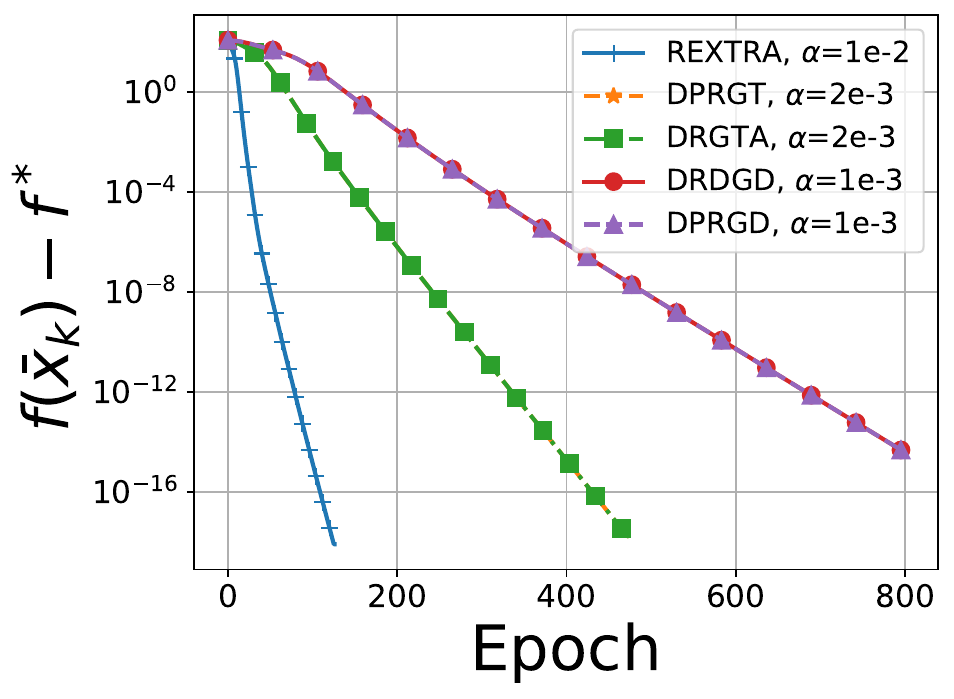}\label{fig:sub10}}
    \caption{Results of tested algorithms on LRMC problem with epochs.}
    \label{fig:lrmc2}
\end{figure}

\begin{figure}[htp]
    \centering
    {\includegraphics[width=0.32\textwidth]{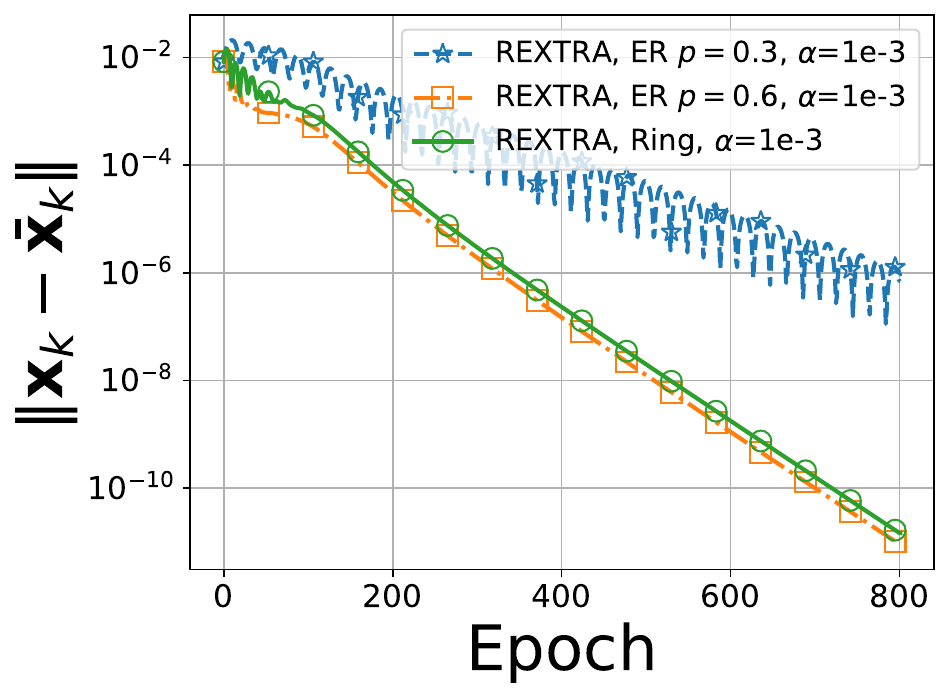}}
    \hfill
    {\includegraphics[width=0.32\textwidth]{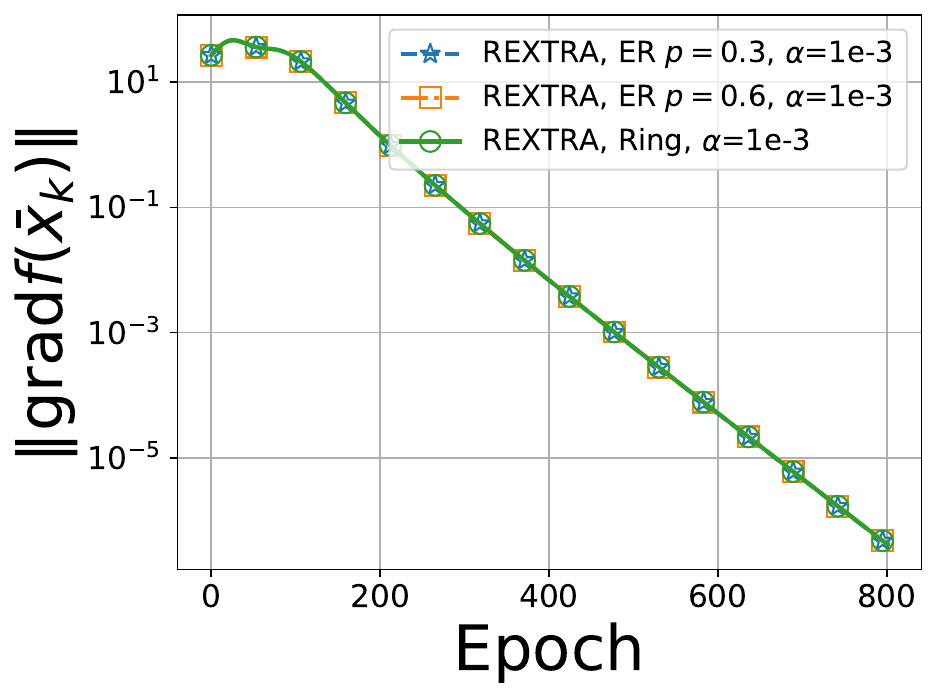}}
    \hfill
    {\includegraphics[width=0.32\textwidth]{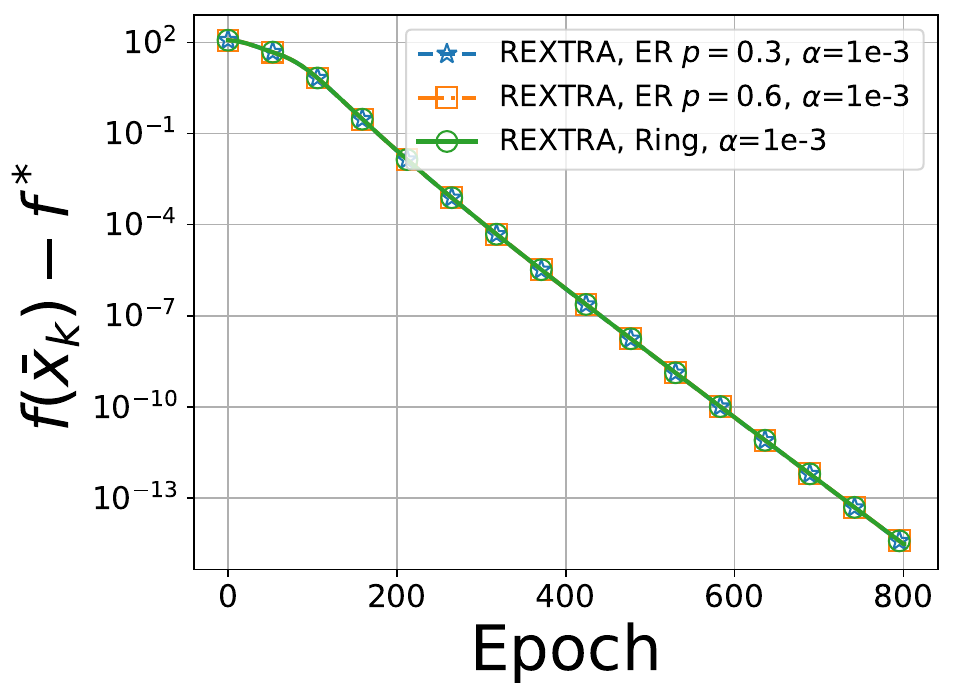}}
    \caption{Results of REXTRA on different graphs on LRMC problems.}
    \label{fig:lrmc-graph}
\end{figure}

% \section{Technical Appendices and Supplementary Material}
% Technical appendices with additional results, figures, graphs and proofs may be submitted with the paper submission before the full submission deadline (see above), or as a separate PDF in the ZIP file below before the supplementary material deadline. There is no page limit for the technical appendices.

%%%%%%%%%%%%%%%%%%%%%%%%%%%%%%%%%%%%%%%%%%%%%%%%%%%%%%%%%%%%

\end{document}